\documentclass[12pt]{amsart}

\setlength{\textwidth}{16truecm}\oddsidemargin=-0.1truecm
\evensidemargin=-0.1truecm \setlength{\textheight}{20cm}

\usepackage{amsmath,amssymb,amsthm,enumerate,color,graphicx}
\usepackage{tikz-cd}
\usepackage{hyperref}

\usepackage{mathrsfs}

\usepackage[capitalise]{cleveref}
\crefname{theorem}{Theorem}{Theorems}
\crefname{lemma}{Lemma}{Lemmas}
\crefname{corollary}{Corollary}{Corollaries}
\crefname{proposition}{Proposition}{Propositions}
\crefname{conjecture}{Conjecture}{Conjectures}
\crefname{question}{Question}{Questions}
\crefname{definition}{Definition}{Definitions}
\crefname{example}{Example}{Examples}
\crefname{remark}{Remark}{Remarks}
\crefname{question}{Question}{Questions}
\crefname{enumi}{}{}
\crefname{equation}{}{}

\setlength{\textwidth}{6.25in}
\setlength{\oddsidemargin}{0in}
\setlength{\evensidemargin}{0in}
\setlength{\textheight}{8.5in}

\newtheorem{theorem}{Theorem}[section]
\newtheorem{corollary}[theorem]{Corollary}
\newtheorem{lemma}[theorem]{Lemma}
\newtheorem{proposition}[theorem]{Proposition}

\theoremstyle{definition}

\theoremstyle{remark}
\newtheorem{remark}[theorem]{Remark}

\newcommand{\al}{\alpha}
\newcommand{\be}{\beta}
\newcommand{\ga}{\gamma}
\newcommand{\de}{\delta}

\newcommand{\cC}{\mathcal{C}}

\newcommand{\beq}{\begin{equation}}
\newcommand{\eeq}{\end{equation}}



\makeatletter
\newsavebox\myboxA
\newsavebox\myboxB
\newlength\mylenA

\newcommand*\xoverline[2][0.9]{%
    \sbox{\myboxA}{$\m@th#2$}%
    \setbox\myboxB\null
    \ht\myboxB=\ht\myboxA%
    \dp\myboxB=\dp\myboxA%
    \wd\myboxB=#1\wd\myboxA
    \sbox\myboxB{$\m@th\overline{\copy\myboxB}$}
    \setlength\mylenA{\the\wd\myboxA}
    \addtolength\mylenA{-\the\wd\myboxB}%
    \ifdim\wd\myboxB<\wd\myboxA%
       \rlap{\hskip 0.5\mylenA\usebox\myboxB}{\usebox\myboxA}%
    \else
        \hskip -0.5\mylenA\rlap{\usebox\myboxA}{\hskip 0.5\mylenA\usebox\myboxB}%
    \fi}

    \newcommand*\xoverlinew[2][1]{%
    \sbox{\myboxA}{$\m@th#2$}%
    \setbox\myboxB\null
    \ht\myboxB=\ht\myboxA%
    \dp\myboxB=\dp\myboxA%
    \wd\myboxB=#1\wd\myboxA
    \sbox\myboxB{$\m@th\overline{\copy\myboxB}$}
    \setlength\mylenA{\the\wd\myboxA}
    \addtolength\mylenA{-\the\wd\myboxB}%
    \ifdim\wd\myboxB<\wd\myboxA%
       \rlap{\hskip 0.5\mylenA\usebox\myboxB}{\usebox\myboxA}%
    \else
        \hskip -0.5\mylenA\rlap{\usebox\myboxA}{\hskip 0.5\mylenA\usebox\myboxB}%
    \fi}
		
		
\makeatother

\newcommand{\oX}{\skew{3}{\xoverline}{X}}

\newcommand{\oU}{\skew{3}{\xoverlinew}{U}}

\newcommand{\oW}{\skew{2}{\xoverlinew}{W}}

\newcommand{\oI}{\skew{3}{\xoverlinew}{I}}

\newcommand{\oJ}{\skew{3.8}{\xoverline}{J}}


\def\sideremark#1{\ifvmode\leavevmode\fi\vadjust{\vbox to0pt{\vss
 \hbox to 0pt{\hskip\hsize\hskip1em
 \vbox{\hsize2.7cm\tiny\raggedright\pretolerance10000
  \noindent #1\hfill}\hss}\vbox to8pt{\vfil}\vss}}}

\begin{document}
\title[]{Asymptotic behavior of geodesics\\ in conformally compact manifolds}
\author{Sean N. Curry}
\address{Department of Mathematics, Oklahoma State University, Stillwater, OK 74078-5061}
\email{sean.curry@okstate.edu} 
\author{Achinta K. Nandi}
\address{Department of Mathematics, UC San Diego, La Jolla, CA 92093-0021}
\email{anandi@ucsd.edu} 


\begin{abstract} 
We study the asymptotic behavior of geodesics near the boundary of a conformally compact Riemannian manifold $(X,g)$. In the case where the sectional curvature at infinity is constant (the asymptotically hyperbolic case) it is known that non-trapped geodesics extend to the conformal infinity as smoothly immersed curves in $\oX$ and that the conformal infinity can be locally smoothly parametrized by the initial directions of such non-trapped geodesics starting at a given point. We show that in the general case non-trapped geodesics typically only extend to the conformal infinity with $\cC^{1,\alpha}$ regularity, due to the presence of a logarithmic singularity sourced by the gradient of the limiting sectional curvature.  In spite of this singular behavior, we show that the endpoints of such geodesics depend smoothly on the initial conditions, so that the conformal infinity can still be locally smoothly parametrized using the initial velocities of geodesics. 
In particular,  the smooth structure of the conformal infinity is canonically determined by $(X,g)$. 
We also show how to specify `initial data' for geodesics at the conformal infinity and obtain an asymptotic expansion that parametrizes the geodesics tending toward a given boundary point.
\end{abstract}

\thanks{2020 {\em Mathematics Subject Classification}: 53C18, 53C22, 34C40}

\maketitle


\section{Introduction and Main Results}

The notion of conformal compactification of semi-Riemannian metrics was introduced by Penrose for the study of asymptotic properties of radiative fields in general relativity \cite{Penrose1963,Penrose1965}. Although Penrose was initially interested in the case where the curvature tends to zero at infinity, the case where the curvature is negative near infinity has also proved to be of great interest. In the Lorentzian setting this gives a class of spacetimes that are asymptotically similar to anti-de Sitter space, and in the Riemannian setting a class of Riemannian manifolds whose asymptotic geometry is similar to that of hyperbolic space. An early application of this class of geometries can be found in the work of Hawking and Page on black hole thermodynamics \cite{HawkingPage1982}, but the development of this area really began with the seminal work of Fefferman and Graham on conformal invariants \cite{FeffermanGraham1984,FeffermanGraham2012}. Starting in the late 1990s, an explosion of activity was sparked by the work of Maldacena, Witten and others on the AdS/CFT correspondence in physics \cite{Maldacena1998,Witten1998,GKP1998,HenningsonSkenderis1998}, and the area continues to be highly active today.

In this paper we study the boundary behavior of geodesics in a complete Riemannian manifold $(X,g)$ of dimension $n+1$ that is \textit{conformally compact} in the sense that $X$ is the interior of a compact manifold $\oX$ with boundary $\partial X$ and $g$ is of the form $\rho^{-2}h$, where $\rho$ is a defining function for $\partial X$ (with $\rho>0$ on $X$) and $h$ is a metric on $\oX$ that is smooth and nondegenerate up to the boundary.  We are particularly concerned with the regularity of the geodesics at the boundary and the smoothness of the dependence of the boundary endpoints on the initial direction of the geodesic. These questions are well understood in the \textit{asymptotically hyperbolic} case, where the limiting sectional curvature of $g$ is constant on $\partial X$ \cite{Mazzeo-Thesis}; it was already noted in \cite{Mazzeo-Thesis}, however, that when the asymptotic hyperbolicity condition fails there is an additional singularity present at the boundary in the (rescaled) geodesic equations and this presents an obstacle to generalizing beyond the asymptotically hyperbolic setting. The main goal of the present paper is to overcome this obstacle. We find, moreover, that new phenomena arise when the metric $g$ fails to be asymptotically hyperbolic.  

If $(X,g)$ is conformally compact in the sense just defined, with $g=\rho^{-2}h$, then it is easy to show that the sectional curvatures of $g$ all tend towards $-|d\rho|_h^2$ as one approaches a point of the conformal infinity $\partial X$ (and hence are uniformly negative near $\partial X$). It follows that $(X,g)$ is asymptotically hyperbolic if and only if the function $\kappa =|d\rho|_h$ on $\partial X$ is constant. Our investigation shows that, while in the asymptotically hyperbolic case all geodesics for $g$ near $\partial X$ extend (by appending their boundary endpoints) to give smoothly embedded curves in $\oX$, geodesics that tend towards a point in $\partial X$ with $d\kappa\neq 0$ pick up a weak singularity at that point arising from the singularity in the geodesic equations sourced by  $d\kappa$ and are only $\cC^{1,\alpha}$ up to the boundary in general. Nevertheless, we find that the dependence of the endpoint of geodesics continues on the initial (point and) direction continues to be $\cC^{\infty}$-smooth even when the asymptotic hyperbolicity condition fails.

While the questions addressed in this paper are of intrinsic interest geometrically, our initial motivation for the present work came from complex analysis in several variables.  
In terms of the behavior of the geodesic equations, generalizing from asymptotically hyperbolic to merely conformally compact manifolds turns out to be highly analogous to generalizing from the case where the log-term in the Bergman kernel is absent to the (generic) case where it is present in Fefferman's treatment of the boundary behavior of geodesics of the Bergman metric on strongly pseudoconvex domains in \cite{Fefferman1974} (even though the presence of the log-term does not correspond to a failure of asymptotic \textit{complex} hyperbolicity  \cite{EpsteinMelroseMendoza1991}). A major motivation for the present work was that it gives a simpler setting in which to consider boundary regularity questions for the cogeodesic flow than that of asymptotically complex hyperbolic metrics (or, more generally, of $\Theta$-metrics \cite{EpsteinMelroseMendoza1991}) where similar issues arise. Fefferman's treatment of the boundary behavior of geodesics of the Bergman metric on strongly pseudoconvex domains was a key step in his proof of the smoothness up to the boundary of biholomorphic mappings between such domains; this result was subsequently proved by other methods, which have been greatly developed (see, e.g., \cite{BellLigocka1980,Bell1981,Forstneric1989,Forstneric1993,PinchukShafikovSukhov2023}), but many open questions remain concerning the boundary regularity of proper holomorphic mappings in positive codimension (for example, when $N>n>1$, is a proper holomorphic map $\mathbb{B}^n \to \mathbb{B}^N$ that is, say, $\cC^2$ up to the boundary forced to be $\cC^{\infty}$ up to the boundary?). In a planned follow-up the authors intend to place the work of Fefferman \cite{Fefferman1974} in a more general setting that includes, e.g., the canonical complete K\"ahler-Einstein metric on strongly pseudoconvex domains in $\mathbb{C}^n$ \cite{Fefferman1976,ChengYau1980}, with a view to applications to boundary regularity of proper holomorphic mappings in several complex variables. For such applications one expects that it will be necessary to be flexible with the choice of metric(s) used (unlike in the case of biholomorphic mappings, one cannot hope to use a ``canonical'' metric on both the domain and target), hence the desire to consider a more general family of metrics.

Of course, conformally compact Riemannian metrics are of considerable interest in their own right and the study of the cogeodesic flow near infinity is well motivated from the point of view of spectral and scattering theory and microlocal analysis for the Laplacian and related operators on such manifolds (the cogeodesic flow being precisely the bicharacteristic flow of the Laplacian on the unit sphere subbundle in the cotangent bundle). This was the original motivation for the consideration of the geodesic flow on conformally compact manifolds in \cite{Mazzeo-Thesis} (where the Hodge Laplacian on $q$-forms is considered, cf.\ \cite{Mazzeo1988}). 
Although some analytic results can be obtained in the general conformally compact case without extra difficulty \cite{Mazzeo-Thesis,Mazzeo1988,Mazzeo1991}, many things simplify when the limiting sectional curvature $-\kappa^2$ is constant on $\partial X$ and the asymptotically hyperbolic case has received considerably more attention than the general case (see, e.g., \cite{Chen2018,ChenHassell-I,ChenHassell-II,ChenHassell2020,EptaminitakisGraham2021,GGSU2019,Guillarmou2005,Lee1995,MazzeoMelrose1987,Vasy2013}). The nonconstancy of the function $\kappa=|d\rho|_h$ on $\partial X$ complicates both the analysis of the cogeodesic flow near the boundary \cite{Mazzeo-Thesis} and of related geometric operators such as the Laplacian $\Delta_g$ \cite{Borthwick2001,CharalambousRowlet2024,SaBarretoWang2016} (since, e.g., for $\lambda>0$ the indicial roots $\Delta_g-\lambda$, viewed as a degenerate elliptic operator on $\oX$, are constant on $\partial X$ if and only if $\kappa$ is constant on $\partial X$). Although we only address the cogeodesic flow in this paper, our study demonstrates the significance of the gradient of the limiting sectional curvature in the asymptotic geometry and analysis of conformally compact manifolds and provides key tools for the study of geometric inverse problems in this setting (cf., e.g., \cite{GGSU2019,dHIKM-arxiv2024}).

Motivation for dropping the asymptotic hyperbolicity condition is also provided by recent interest in submanifolds in Poincar\'e-Einstein manifolds with prescribed boundary at infinity (see, e.g., \cite{AlexakisMazzeo2010,AlexakisMazzeo2015,GrahamReichert2020,Tyrrell2023,Marx-Kuo2025, Marx-Kuo-arxiv2024,CaseGrahamKuoTyrrellWaldron2025}), which is again driven by connections with physics \cite{GrahamWitten1999,RyuTakayanagi2006}. Although Poincar\'e-Einstein manifolds are necesarily asymptotically hyperbolic, such submanifolds are typically only conformally compact. Our study of geodesics near the conformal infinity in such (merely) conformally compact manifolds is also related with the study of minimal submanifolds in Poincar\'e-Einstein  manifolds (and ``partially even'' asymptotically hyperbolic manifolds more generally) and could be seen as extending this work to the general conformally compact setting for the case of submanifolds of dimension 1; cf.\ Remark \ref{rem:minimal} below.

\subsection{Main Results}

Before stating our main results, we briefly recall some basic facts about conformally compact manifolds. We note first that when $(X,g)$ is a conformally compact Riemannian manifold with $g$ expressed as $\rho^{-2}h$, the defining function $\rho$ and the metric $h$ on $\oX$ are not canonical; if we replace $\rho$ by $\hat{\rho}=\Omega\rho$ and $h$ by $\hat{h}=\Omega^2 h$ where $\Omega>0$ is any smooth function on $\oX$, then $g=\hat{\rho}^{-2}\hat{h}$. Nevertheless, on $\partial X$ the quantity $\kappa=|d\rho|_h$ is well defined, independent of the particular choice of $h$ and $\rho$. Indeed, the quantity $-\kappa^2$ gives the limiting value of the sectional curvatures of $g$ at any point on $\partial X$, as can easily be seen by computing the Riemannian curvature tensor of $g$ in terms of that of $h$ using the standard conformal transformation law. One finds
\beq\label{eq:curvature-asymp}
\begin{aligned}
R_{ijkl} &=-\rho^{-4}|d\rho|_h^2(h_{ik}h_{jl} - h_{jk}h_{il}) + O(\rho^{-3}) \\
&= - \kappa^2(g_{ik}g_{jl} - g_{jk}g_{il}) + O(\rho^{-3}),
\end{aligned}
\eeq
where the $O(\rho^{-3})$ term is $\rho^{-3}$ times a tensor that is smooth up to the boundary of $\oX$. 

Of course, the metric $h$ (or $\hat{h}$) determines a canonical conformal structure on $\oX$ and hence also on $\partial X$. In the case when $\kappa\equiv 1$, it is standard to normalize a defining function $\rho$ (equivalently, a choice of $h$ on $\oX$) near $\partial X$ in terms of a choice of metric in the conformal class on $\partial X$ by requiring that $h$ extends this metric and $|d\rho|_{h}=1$ in a neighborhood of $\partial X$; in particular, it is natural to assume in this case that $h$ and $\rho$ have been chosen so that (near $\partial X$) $\rho$ is the $h$-distance from the boundary. 
In the general conformally compact case, however, it is not possible to normalize $h$ and $\rho$ in this way. Given a choice of $h$ and $\rho$ such that $g=\rho^{-2}h$ we then have two natural defining functions for $\partial X$, namely, $\rho$ and the $h$-distance from the boundary, which we will denote by $x$. The function $x$ is defined near $\partial X$ and we may use the flow of $\mathrm{grad}_h \hspace{0.8pt}x$ to identify a (closed) neighborhood $\oU$ of $\partial X$ in $\oX$ with the product $[0,\delta]\times \partial X$ so that $x$ becomes the coordinate on the first factor. Representing a point in $\oU\subset \oX$ by $(x,x')\in [0,\delta]\times \partial X$ our two defining functions are related by
\beq \label{eq:rho-vs-x}
\rho(x,x') = \kappa(x')x + O(x^2),
\eeq
where $\kappa= |d\rho|_h$ on $\partial X$. This relationship will play an important role in our analysis.


Since it is convenient to have a coordinate that increases as one moves towards the boundary, in the following we typically make use of $x^0=-x$ rather than $x$. Fixing a local coordinate system $(x^1,\ldots,x^n)$ for $\partial X$ we obtain a Fermi coordinate chart $(x^0,x^1,\ldots,x^n)$ for $\partial X\subset \oX$ with respect to the metric $h$. 
Our first main result is:

\begin{theorem}\label{thm:asymptotic-behavior-of-geodesic}
Let $(X, g)$ be a conformally compact Riemannian manifold with $g= \rho^{-2}h$. Let $\gamma: [0, \infty) \to X$ be a geodesic for $g$ that leaves all compact sets. Then
\begin{itemize}
      \item[(i)] $\gamma(t)$ tends to a definite point $\lim_{t \to \infty} \gamma(t) = \gamma_{\infty} \in \partial X$;
      \item[(ii)] in a Fermi coordinate chart $(x^0,x^1, \dots, x^n)$ for the boundary $\partial X$ with respect to the metric $h$, with $x^0<0$ in X, near $\gamma_{\infty}$ we have
      $$
      \begin{aligned}
      \dot{\gamma}^{0}(t) &= \rho + O(\rho^2), \\ 
      \dot{\gamma}^{\alpha}(t) &= O(\rho^2 \log \rho) \text{ for } \alpha=1,\ldots n;
      \end{aligned}
      $$
      \item[(iii)] $\gamma([0,\infty)) \cup \{\gamma_{\infty}\}$ is a $\cC^{1,\alpha}$-immersed curve in $\oX$ that meets the boundary orthogonally;
      \item[(iv)] the point $\gamma_{\infty} = \lim_{t \to \infty}  \gamma(t) \in \partial X$ varies smoothly as we vary the initial direction of $\gamma$.
\end{itemize}
\end{theorem}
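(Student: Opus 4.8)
My plan rests on one structural observation that dissolves the apparent tension in the theorem. Parametrize a $g$-geodesic by $g$-arclength $t$; then $g(\dot\gamma,\dot\gamma)\equiv1$ forces $|\dot\gamma|_h\equiv\rho$, so writing $\dot\gamma=\rho v$ we may track the pair $(x,v)$, with $x=(x^0,x')$ the Fermi coordinates and $v$ an $h$-unit vector. The resulting system on the $h$-unit sphere bundle is
\[
\dot x^k=\rho\,v^k,\qquad \dot v^k=-\rho\,\Gamma^k_{ij}(h)\,v^iv^j+(\partial_l\rho)\,v^lv^k-(\mathrm{grad}_h\rho)^k ,
\]
and its right-hand side is $\cC^\infty$ up to $\{x^0=0\}$ because $\rho$, $h$ and $\mathrm{grad}_h\rho$ are; the logarithmic ``extra singularity'' flagged in \cite{Mazzeo-Thesis} appears only if one further rescales $t$ to $h$-arclength. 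Using \eqref{eq:rho-vs-x} one checks that $\mathcal F:=\{x^0=0,\ v=\partial_{x^0}\}\cong\partial X$ is a compact manifold of equilibria, and that linearizing there gives eigenvalue $-\kappa(x')<0$ on each of the $n+1$ normal directions (the $x^0$-direction and the $n$ tangential components of $v$), with an off-diagonal term proportional to $\mathrm{grad}_{\partial X}\kappa$ coupling them --- a resonance, since the eigenvalues coincide --- while the $n$ center directions (the $x^\alpha$) have eigenvalue $0$. Thus $\mathcal F$ is a normally attracting hyperbolic invariant manifold, $r$-normally hyperbolic for every $r$.

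From this package I would read off (i)--(iii). Since $\gamma$ leaves every compact set, $\rho(\gamma(t))\to0$, so $x^0\to0$ and the $\omega$-limit of the trajectory lies in $\{x^0=0\}$; there $x'$ is frozen and $\dot v^k=\kappa(\delta^k_0-v^0v^k)$, a flow whose only forward attractor is $v=\partial_{x^0}$. Since $\dot v^0=\kappa(1-(v^0)^2)+O(\rho)>0$ whenever $v^0<1-\eps$ and $\rho$ is small, $v^0\to1$, i.e.\ $v\to\partial_{x^0}$; hence the trajectory enters the basin of $\mathcal F$ and, $\mathcal F$ being a manifold of equilibria, converges to a point $(0,\gamma_\infty,\partial_{x^0})$, proving (i) and the orthogonality in (iii). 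For (ii): $\psi:=\tfrac1\rho\dot\rho=\langle d\rho,v\rangle_h\to-\kappa(\gamma_\infty)$ gives $\rho\sim e^{-\kappa t}$, and feeding this into the (resonant) linearized equation $\dot v^\alpha=-\kappa v^\alpha+c^\alpha\rho+\cdots$ yields $v^\alpha=O(\rho\log\rho)$, hence $\dot\gamma^\alpha=\rho v^\alpha=O(\rho^2\log\rho)$ and $\dot\gamma^0=\rho v^0=\rho+O(\rho^2)$. For the regularity in (iii), parametrize the curve near $\partial X$ by $x^0$ (valid since $\dot x^0>0$): $\tfrac{dx^\alpha}{dx^0}=\tfrac{v^\alpha}{v^0}=O(x^0\log|x^0|)$, which is $\cC^{0,\alpha}$ for every $\alpha<1$ but not $\cC^1$ unless the $\mathrm{grad}_{\partial X}\kappa$-coefficient vanishes; so the curve is $\cC^{1,\alpha}$-immersed (and generically no better), orthogonal at the boundary.

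For the main point (iv): the set $\mathcal O$ of non-trapped initial directions is open, and for $v_0\in\mathcal O$ the analysis above shows $\gamma_{p,v_0}$ eventually enters the \emph{open} basin $W^s(\mathcal F)$, say $\gamma_{p,v_0}(t_1)\in W^s(\mathcal F)$; by smooth dependence on initial conditions for the finite time $t_1$, the same holds for all $v$ in a neighborhood of $v_0$, and there the endpoint is $\gamma_{p,v,\infty}=\pi\big(\gamma_{p,v}(t_1)\big)$, where $\pi\colon W^s(\mathcal F)\to\mathcal F\cong\partial X$ is the asymptotic-phase projection of the normally hyperbolic manifold $\mathcal F$. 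Since $\mathcal F$ is $r$-normally hyperbolic for all $r$, the Hirsch--Pugh--Shub/Fenichel theory gives that its stable foliation, hence $\pi$, is $\cC^\infty$; composing, $v\mapsto\gamma_{p,v,\infty}$ is $\cC^\infty$. (One can instead argue directly: $\gamma_{p,v,\infty}^{\alpha}=x^\alpha(t_1)+\int_{t_1}^\infty\rho v^\alpha\,dt$, the integrand is $O(\rho^2\log\rho)=O(te^{-2\kappa t})$ along trajectories tending to $\mathcal F$, and a bookkeeping of the variational equations --- whose normal directions $x^0$ and $v$ decay like $te^{-\kappa t}$ while the tangential $x^\alpha$ merely stay bounded --- shows every derivative in $v$ of the integrand is $O(e^{-(2\kappa-\eps)t})$, so the integral converges in every $\cC^k$.)

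The crux is the first paragraph: seeing that in the $g$-arclength parameter the rescaled geodesic equations are already smooth up to $\partial X$ and that $\mathrm{grad}_{\partial X}\kappa$ enters only as a resonance between two equal normal eigenvalues. This single mechanism both forces individual geodesics to be merely $\cC^{1,\alpha}$ at the boundary and yet leaves the vector field, the invariant manifold $\mathcal F$, its stable foliation, and the endpoint map $\pi$ all smooth --- which is exactly why (iv) holds despite (iii). The remaining technical points (verifying the normal spectrum of $\mathcal F$, keeping straight which quantities are smooth in $(x,v)$ versus only on $X$, extending the system smoothly past $\{x^0=0\}$, and --- if one takes the direct route --- the variational-equation bookkeeping in which the $x^0$-variation decays but the $x^\alpha$-variation does not) are routine but need care.
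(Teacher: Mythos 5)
Your proposal is correct in outline but takes a genuinely different route from the paper. The paper reparametrizes by $\tau=x^0$, removes the boundary singularity by hand (a matrix integrating factor $M$ followed by subtraction of the explicit logarithmic term $A^{\alpha}$), and arrives at the system \cref{eqn:cogeo-tau-w}, whose right-hand side is smooth in the dependent variables uniformly up to $\tau=0$; items (i)--(iii) then follow from elementary estimates and the $\cC^1$ extension of solutions to $\tau=0$, while (iv) follows from the bespoke smooth-dependence theorem for such non-autonomous systems proved in \cref{app:reg}. You instead keep the $g$-arclength parameter, observe that in the variables $(x,v)$ with $\dot\gamma=\rho v$ the extended system is a $\cC^{\infty}$ vector field on the $h$-unit sphere bundle up to $\partial X$ (your formula for $\dot v^k$ is correct, and in Fermi coordinates $\Gamma^k_{00}(h)=0$, so the only linear coupling into $\dot v^{\alpha}$ at the boundary equilibria is indeed $\kappa^{\alpha}\,x^0$, coming from \cref{eq:rho-vs-x}), identify $\mathcal F=\{x^0=0,\ v=\partial_{x^0}\}$ as a compact normally attracting manifold of equilibria with trivial tangential dynamics, hence $r$-normally hyperbolic for every $r$, and obtain (iv) from the smoothness of the asymptotic-phase projection; the paper's logarithm is reinterpreted as the resonance of the equal normal eigenvalues $-\kappa$ coupled by $\mathrm{grad}\,\kappa$. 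This buys a conceptual explanation of the phenomenon and replaces the appendix by standard invariant-manifold theory, whereas the paper's explicit desingularized system is self-contained and is reused later (the precise conormal expansion of the right-hand side, the obstruction \cref{eq:obstuction}, \cref{thm:AH-characterization}, \cref{thm:shooting-geodesics-from-boundary}, and the numerics of \cref{sec:examples}), which your route would need extra work to recover. Three caveats to make your write-up airtight: (a) for (iv) you must quote the version of Fenichel/HPS with rate conditions (asymptotic phase and regularity of the stable fibration), since the bare statement for a normally hyperbolic invariant manifold only gives $\cC^{r}$ leaves with H\"older transverse dependence; here the trivial tangential dynamics is exactly what makes the rate conditions hold for every $r$, and even a one-derivative loss at each finite stage still yields $\cC^{\infty}$; (b) in (ii) and (iii), ``feeding $\rho\sim e^{-\kappa t}$ into the resonant linearization'' and the bound $O(x^0\log|x^0|)$ need a short variation-of-constants (Duhamel/Gronwall) argument, and H\"older continuity of $dx^{\alpha}/dx^0$ does not follow from the size bound alone but from the accompanying control of its derivative ($O(\log|x^0|)$); (c) in the direct alternative for (iv), the claimed $O(e^{-(2\kappa-\epsilon)t})$ decay of all $v$-derivatives of the integrand is more than is needed and not obviously what the variational equations give at higher order; integrable decay locally uniformly in the initial data suffices and does hold, by the uniform normal attraction near the compact $\mathcal F$.
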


The results of \cref{thm:asymptotic-behavior-of-geodesic} are obvious in the model case of hyperbolic space $(\mathbb{B}^{n+1},g)$ and were known previously in the asymptotically hyperbolic case (as were (i) and the fact that the direction of $\dot{\gamma}(t)$ tends to that of the outward normal $\partial X$ in the general case),  see \cite{Mazzeo-Thesis}.
Items (ii), (iii) and (iv), however, are new in the general case where $|d\rho|_h$ is not required to be constant on $\partial X$ and their proof in this case is more subtle due to a singularity in the (suitably reparametrized) geodesic equations at the boundary which is sourced by the gradient of the function $\kappa=|d\rho|_h$ on $\partial X$.
In the asymptotically hyperbolic case, the singularity is not present and the $O(\rho^2 \log \rho)$ bound in \cref{thm:asymptotic-behavior-of-geodesic}(ii) can be improved to $O(\rho^2)$. It turns out that this gives a characterization of asymptotically hyperbolic metrics among conformally compact metrics on $X$: 
\begin{theorem}\label{thm:AH-characterization}
Let $(X, g)$ be a conformally compact Riemannian manifold with $g= \rho^{-2}h$.  Suppose $\partial X$ is connected. Then the following are equivalent:
\begin{itemize}
    \item[(i)] the metric $g$ on $X$ is asymptotically hyperbolic;
    \item[(ii)] for every geodesic $\gamma: [0, \infty) \to X$ for $g$ that tends to a point $\gamma_{\infty}$ on $\partial X$, in a Fermi coordinate chart $(x^0,x^1, \dots, x^n)$ for the boundary $\partial X$ with respect to the metric $h$, near $\gamma_{\infty}$ we have
    $$
    \dot{\gamma}^{\alpha}(t) = O(\rho^2) \text{ for } \alpha =1,\ldots n;
    $$
    \item[(iii)] for every geodesic $\gamma: [0, \infty) \to X$ for $g$ that tends to a point $\gamma_{\infty}$ on $\partial X$, $\gamma([0,\infty)) \cup \{\gamma_{\infty}\}$ is a $\cC^{\infty}$-immersed curve in $\oX$.
\end{itemize}
\end{theorem}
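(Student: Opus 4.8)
The plan is to prove the cycle of implications $(i)\Rightarrow(iii)\Rightarrow(ii)\Rightarrow(i)$, using the asymptotic analysis behind \cref{thm:asymptotic-behavior-of-geodesic} as the main tool. For $(i)\Rightarrow(iii)$ I would first observe that when $\kappa=|d\rho|_h$ is constant one can choose the defining function so that, near $\partial X$, $\rho$ equals a constant multiple of the $h$-distance $x$ to the boundary (solve the eikonal equation $|d\rho|_h\equiv\kappa$ in a collar; this is consistent with the given boundary value of $\kappa$ precisely because $\kappa$ is constant). With such a choice the tangential derivatives $\partial_\alpha\rho$ vanish identically near $\partial X$ in a Fermi chart, so the $d\kappa$-sourced singular term in the (suitably reparametrized) geodesic equations is absent; the reparametrized system then extends to a smooth system of ODEs across $\{\rho=0\}$, and smooth dependence on initial conditions shows that $\gamma([0,\infty))\cup\{\gamma_{\infty}\}$ is a $\cC^{\infty}$-immersed curve in $\oX$. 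This is the asymptotically hyperbolic case treated in \cite{Mazzeo-Thesis}.

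For $(iii)\Rightarrow(ii)$, let $\gamma$ tend to $\gamma_{\infty}$. By \cref{thm:asymptotic-behavior-of-geodesic}(iii) the curve $\gamma([0,\infty))\cup\{\gamma_{\infty}\}$ is $\cC^{\infty}$-immersed and meets $\partial X$ orthogonally at $\gamma_{\infty}$, so in the Fermi chart its tangent line at $\gamma_{\infty}$ is spanned by $\partial_{x^0}$; by the implicit function theorem it can then be written near $\gamma_{\infty}$ as a $\cC^{\infty}$ graph $x^\alpha=\phi^\alpha(x^0)$ with $(\phi^\alpha)'(0)=0$. Hence $\dot\gamma^\alpha=(\phi^\alpha)'(\gamma^0)\,\dot\gamma^0$, and since $(\phi^\alpha)'(\gamma^0)=O(\gamma^0)=O(\rho)$ (using $(\phi^\alpha)'(0)=0$ together with \eqref{eq:rho-vs-x}) while $\dot\gamma^0=\rho+O(\rho^2)=O(\rho)$ by \cref{thm:asymptotic-behavior-of-geodesic}(ii), we obtain $\dot\gamma^\alpha=O(\rho^2)$, which is $(ii)$.

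The substance of the theorem is $(ii)\Rightarrow(i)$, and the plan is to push the expansion behind \cref{thm:asymptotic-behavior-of-geodesic} one order further. Writing $v^\alpha=\dot\gamma^\alpha$ and using the conformal transformation law for the Christoffel symbols in the Fermi chart, the reparametrized tangential geodesic equation takes the form $\dot v^\alpha=-2\kappa v^\alpha-\kappa^{-1}h^{\alpha\beta}\partial_\beta\kappa\,\rho^2+(\text{terms contributing only }O(\rho^2)\text{ to }v^\alpha)$, with $\rho=C e^{-\kappa t}(1+o(1))$ and all coefficients evaluated at $\gamma_{\infty}$ to leading order; integrating with integrating factor $e^{2\kappa t}$ (which is where the logarithm enters) and using $v^\alpha\to0$ gives
\[
\dot\gamma^\alpha(t)=c(\gamma_{\infty})\,h^{\alpha\beta}\big(\partial_\beta\kappa\big)(\gamma_{\infty})\,\rho^2\log\rho+O(\rho^2),
\]
where $c(\gamma_{\infty})$ is a nonzero constant depending only on $\gamma_{\infty}$ (in suitable conventions $c(\gamma_{\infty})=\kappa(\gamma_{\infty})^{-2}$) and \emph{not} on the particular geodesic: by \cref{thm:asymptotic-behavior-of-geodesic} every geodesic tending to $\gamma_{\infty}$ does so orthogonally with $\dot\gamma^0=\rho+O(\rho^2)$ and $\dot\rho/\rho\to-\kappa(\gamma_{\infty})$, so the geodesic-dependent scale $C$ is absorbed into $\rho^2$ and only the $O(\rho^2)$ remainder sees the individual geodesic. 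Thus hypothesis $(ii)$ forces $d\kappa(\gamma_{\infty})=0$ at the endpoint of every geodesic that leaves all compact sets. To finish I would show these endpoints are dense in $\partial X$: given $p\in\partial X$ and small $x_0>0$, the $g$-geodesic issuing from the point $(x_0,p)$ of the collar $\oU$ in the outward $g$-unit normal direction remains in $\oU$ with $x(\gamma(t))\to 0$ (an elementary shooting/barrier estimate near $\partial X$, using the leading-order relation $\dot x\approx-\kappa x$), hence leaves all compact sets and, by \cref{thm:asymptotic-behavior-of-geodesic}(i), tends to a point $\gamma_{\infty}$; a Gr\"onwall estimate on the tangential components bounds its excursion before reaching $\{x=0\}$ by $O(x_0)$, so $\gamma_{\infty}$ lies within $O(x_0)$ of $p$. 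Letting $x_0\to0$ shows $d\kappa$ vanishes on a dense subset of $\partial X$, hence identically, so $\kappa$ is constant on the connected manifold $\partial X$, i.e.\ $(i)$ holds.

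The main obstacle is the second-order expansion in $(ii)\Rightarrow(i)$: one must carry the asymptotic analysis of the singular reparametrized geodesic system one order beyond what \cref{thm:asymptotic-behavior-of-geodesic} requires, identify and control the $\rho^2\log\rho$ term through the integrating factor, and verify that its coefficient is a nonzero multiple of $\mathrm{grad}_h\kappa(\gamma_{\infty})$ which is genuinely independent of the geodesic (so that it vanishes exactly when $d\kappa(\gamma_{\infty})=0$). By comparison, the density of geodesic endpoints and the implicit-function-theorem reparametrization in $(iii)\Rightarrow(ii)$ are routine, as is the specialization to the asymptotically hyperbolic case in $(i)\Rightarrow(iii)$.
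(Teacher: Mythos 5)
Your proposal is correct in outline, and its pivotal ingredient is exactly the paper's: the refined expansion $\dot{\gamma}^{\alpha} = \kappa^{-2}\kappa^{\alpha}\,\rho^2\log\rho + O(\rho^2)$, with the log coefficient determined by $d\kappa$ at the endpoint and independent of the particular geodesic (this is \cref{rem:x-alpha-dot-more-precise-asymp}), so that (ii) forces $d\kappa=0$ at geodesic endpoints and connectedness of $\partial X$ finishes the job. Where you differ: for (iii)$\Rightarrow$(ii) you give a direct graph/Taylor argument using only orthogonality and $\dot{\gamma}^0=\rho+O(\rho^2)$, which is more elementary than the paper's contrapositive via the log expansion; for the expansion itself you propose to re-derive it by an integrating factor in the parameter $t$, whereas the paper reads it off the already-constructed extended system \cref{eqn:cogeo-tau-w} from $\dot{x}^{\alpha}=\rho^2L^{\alpha}_{\beta}(w^{\beta}+A^{\beta})$ with $w^{\beta}$ bounded, which avoids any fine $\rho$--$t$ asymptotics; and for boundary coverage you use density of endpoints (via a Gr\"onwall bound on tangential excursion) plus continuity of $d\kappa$, while the paper shows every boundary point is an endpoint by shooting in from $\tau=0$. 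The one step you should tighten is the assertion $\rho = Ce^{-\kappa t}(1+o(1))$: \cref{lem:rho-t-asymp} only gives two-sided bounds, and your integrating-factor computation genuinely needs the limit $\rho e^{\kappa(q)t}\to C>0$ (or at least enough control to convert $\int e^{2\kappa s}\rho^2\,ds$ into $t\,e^{2\kappa t}\rho^2+O(1)$). This is provable with the paper's ingredients — $\dot{\rho}/\rho = \rho_0 w^0 + O(\rho^2\log\rho) = -\kappa(q)+O(e^{-ct})$ using $w^0=1+O(\rho)$ and the exponential rate of tangential convergence, so $\log(\rho e^{\kappa(q)t})$ has integrable derivative — but it is not supplied by what you cite; alternatively, invoking the paper's extended-system form of the expansion bypasses the issue entirely. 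With that point repaired, your cycle $(i)\Rightarrow(iii)\Rightarrow(ii)\Rightarrow(i)$ is a valid, slightly rearranged proof of \cref{thm:AH-characterization}.
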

\begin{remark}
    In fact, for (iii) we need only require that for each point in $\partial X$ there is one geodesic $\gamma$ for $g$ tending to this point such that  $\gamma([0,\infty)) \cup \{\gamma_{\infty}\}$ is $\cC^{\infty}$; see \cref{thm:shooting-geodesics-from-boundary} and \cref{eq:obstuction} below. A similar comment applies to (ii); see \cref{rem:x-alpha-dot-more-precise-asymp}.
\end{remark}

The key observation behind the above two theorems is that, while in the asymptotically hyperbolic case the cogeodesic flow for geodesics approaching $\partial X$ can be reparametrized and extended smoothly to $\partial X$ in a certain sense \cite{Mazzeo-Thesis,Borthwick2001,GGSU2019}, in the general case one can only obtain an analogous extended flow with coefficients that are smooth on $X$ and on $\partial X$ but have a logarithmic-type conormal singularity at $\partial X$ (meaning regularity is only lost when one takes derivatives in the normal direction at $\partial X$); this singularity is absent precisely when $\kappa$ is locally constant. The conormal nature of the singularity means that, although the geodesics pick up a weak singularity at $\partial X$ (being only $\cC^{1,\alpha}$ and not $\cC^2$ there in general), the boundary endpoint $\gamma_{\infty}$ of the geodesic still varies smoothly with the initial condition (\cref{thm:asymptotic-behavior-of-geodesic}(iv)). 

Since the sectional curvatures of a conformally compact Riemannian manifold $(X,g)$ tend to $-\kappa^2<0$ on $\partial X$, they are negative and bounded away from zero there. From part (iv) of \cref{thm:asymptotic-behavior-of-geodesic} we therefore obtain the following:

\begin{theorem}\label{thm:boundary-exponential}
    Let $(X, g)$ be a conformally compact Riemannian manifold. Then
    \begin{itemize}
        \item[(i)] if $p \in X$ is sufficiently close to $\partial X$ then there is an open subset $V_p$ in the unit tangent space $S_p X \subset T_p X$ such that all geodesics in $(X,g)$ with initial direction in $V_p$ tend to a point in the boundary $\partial X$;
        \item[(ii)] the map $\exp_{p,\infty}: V_p \to \partial X$ that sends the initial velocity of a geodesic to the endpoint of that geodesic in $\partial X$ is a local $\cC^{\infty}$-diffeomorphism onto its image;
        \item[(iii)] for every point $p_{\infty} \in \partial X$ there is a point $p \in X$ and an open subset $V_{p,p_{\infty}} \subset V_p$ such that $\exp_{p, \infty}$ is a $\cC^{\infty}$-diffeomorphism from $V_{p,p_{\infty}}$ to an open neighborhood of $p_{\infty}$ in $\partial X$.
    \end{itemize}
\end{theorem}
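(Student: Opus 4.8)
The plan is to deduce everything from Theorem 1.1(iv), together with the fact that the limiting sectional curvatures of $g$ are $-\kappa^2 < 0$ on $\partial X$ and hence negative on a collar neighborhood of $\partial X$. For part (i), fix a point $p \in X$ close enough to $\partial X$ that $p$ lies in such a collar, and consider the exponential map $\exp_p \colon S_pX \to X$. For $v$ equal to (or close to) the outward conormal direction at a nearby boundary point, the geodesic $t \mapsto \exp_p(tv)$ heads toward $\partial X$; by negative curvature near the boundary, geodesics that start heading outward in the collar and do not re-enter the region of possibly-nonnegative curvature must leave every compact set, so by Theorem 1.1(i) they converge to a point of $\partial X$. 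The set $V_p$ of such $v$ is open because the condition ``$\gamma$ leaves all compact sets'' is an open condition on initial data (a geodesic that enters the collar transversally and reaches a point sufficiently deep in the collar with outward-pointing velocity is, by a standard comparison/convexity argument using $\mathrm{Hess}_g\, x < 0$ near $\partial X$, forced to continue to the boundary).

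For part (ii), on $V_p$ the map $\exp_{p,\infty}$ is exactly the map $v \mapsto \gamma_\infty$ studied in Theorem 1.1(iv), so it is $\cC^\infty$. To see it is a local diffeomorphism we compute its differential. This is most transparent in the model: for $(\mathbb{B}^{n+1}, g_{\mathrm{hyp}})$ with $p$ the center, $\exp_{p,\infty}$ is precisely the identification $S_pX \cong \partial \mathbb{B}^{n+1} = \mathbb{S}^n$, a diffeomorphism. In general, since $V_p$ is connected to the model configuration through a path of conformally compact metrics (or, more directly, since the endpoint map depends smoothly on $(p,v)$ jointly), it suffices to check that $d(\exp_{p,\infty})_v$ is injective at a single $v$ and then argue that its rank is locally constant; alternatively, one shows directly that distinct nearby geodesics from $p$ have distinct endpoints. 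The cleanest route: two geodesics from $p$ with distinct initial directions in $V_p$ cannot meet again, because a region where sectional curvature is negative has no conjugate points and geodesics between two points are unique there — but since both geodesics run off to $\partial X$, if their endpoints in $\partial X$ coincided one could, for the `half-geodesics' after they both enter the negatively curved collar, derive a contradiction with uniqueness of geodesics in a neighborhood of $\gamma_\infty$ (using that the two boundary curves are $\cC^{1,\alpha}$ up to $\gamma_\infty$, both orthogonal to $\partial X$, by Theorem 1.1(iii)). Hence $\exp_{p,\infty}$ is injective near each point of $V_p$, and being a smooth injective map between manifolds of the same dimension $n$, it is a local diffeomorphism onto its image.

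For part (iii), fix $p_\infty \in \partial X$. By Theorem 1.3 (or just by shooting geodesics inward from $\partial X$, which is addressed by the later Theorem on shooting geodesics from the boundary) there is a geodesic $\gamma$ converging to $p_\infty$; take $p = \gamma(t_0)$ for $t_0$ large enough that $p$ lies in the negatively curved collar and is close to $\partial X$, so that $p \in X$ qualifies for parts (i)--(ii) and the initial direction $v_0 = \dot\gamma(t_0)/|\dot\gamma(t_0)|_g \in V_p$ satisfies $\exp_{p,\infty}(v_0) = p_\infty$. By part (ii), $\exp_{p,\infty}$ is a $\cC^\infty$-diffeomorphism from a neighborhood $V_{p,p_\infty}$ of $v_0$ in $V_p$ onto a neighborhood of $p_\infty$ in $\partial X$, which is the claim. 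Finally, the canonical smooth structure statement in the abstract follows because $\exp_{p,\infty}$, built intrinsically from $(X,g)$, provides $\cC^\infty$-compatible charts on $\partial X$ covering it.

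The main obstacle I expect is part (ii): Theorem 1.1 gives smoothness of the endpoint map but not automatically the \emph{nondegeneracy} of its differential, and because the boundary curves are only $\cC^{1,\alpha}$ one must be careful in the injectivity argument — one cannot naively differentiate the geodesic twice at $\partial X$. The resolution is to do the Jacobi-field / no-conjugate-points comparison strictly in the interior (where $\gamma$ is smooth and curvature is negative) and only pass to the limit $t \to \infty$ at the very end, using the $\cC^{1,\alpha}$ boundary regularity and orthogonality from Theorem 1.1(iii) to conclude that coincident endpoints force the interior geodesics to agree.
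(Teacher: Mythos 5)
There is a genuine gap, and it is exactly where you anticipated trouble: part (ii). The theorem asserts that $\exp_{p,\infty}$ is a local $\cC^{\infty}$-diffeomorphism, which requires nondegeneracy of its differential; smoothness (from Theorem 1.1(iv)) plus injectivity does not suffice, since a smooth injective map between $n$-manifolds need not be a local diffeomorphism (e.g.\ $x\mapsto x^3$). Your two substitutes do not supply the nondegeneracy: the ``check injectivity of the differential at one point and argue the rank is locally constant / deform to the model'' idea has no justification (rank is not locally constant for general smooth maps, and being a local diffeomorphism is not preserved along a path of metrics), and the no-conjugate-points route only aims at injectivity of the map. Moreover, that injectivity argument is itself flawed: two distinct geodesics through $p$ that converge to the same boundary point give no contradiction with ODE uniqueness near $\gamma_\infty$, because the endpoint at infinity is not an initial condition — indeed Theorem 1.5 shows that the geodesics tending to a fixed $q\in\partial X$ form an $n$-parameter family, all meeting $\partial X$ orthogonally and agreeing to first order at $q$, distinguished only by the second-order coefficient $u$. (Also, uniqueness of geodesics joining two points in a negatively curved region requires simple connectivity, which the collar $U$ need not have; the paper accordingly claims only a \emph{local} diffeomorphism, not injectivity on all of $V_p$.)

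What is needed, and what the paper actually does, is a quantitative Jacobi field estimate. One identifies $d(\exp_{p,\infty})_{v_0}(v)$ with $\lim_{t\to\infty}J(t)$ in $\oX$, where $J$ is the Jacobi field of the variation, and must show this limit is nonzero. The paper first proves (its Lemma 3.6) that $\rho\sim e^{-\kappa t}$ along $\gamma$ with $\kappa$ evaluated at $\gamma_\infty$; then, using the curvature asymptotics \cref{eq:curvature-asymp}, which give sectional curvatures $-\kappa^2+O(e^{-\kappa t})$ along $\gamma$, the Rauch comparison theorem yields $|J(t)|_g\sim e^{\kappa t}|v|_g$, so that $|J(t)|_h=\rho\,|J(t)|_g$ is bounded away from zero and the limit is nonzero; hence the differential is injective and $\exp_{p,\infty}$ is a local diffeomorphism. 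Your proposal never produces this matching of exponential rates, and without it $|J(t)|_h$ could a priori tend to $0$, leaving the differential degenerate even if the map were injective. Parts (i) and (iii) of your outline are essentially sound — the paper handles (i) more directly by taking $V_p$ to be the open half-sphere of directions with positive $\partial/\partial x^0$-component and invoking Propositions 2.2 and 2.4 rather than a convexity argument, and proves (iii) just as you do — but as written part (ii) is not established.
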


The above result is again obvious in the model case of hyperbolic space $(\mathbb{B}^{n+1},g)$ and was known in the case where $(X,g)$ is asymptotically hyperbolic \cite{Mazzeo-Thesis}. The key point in the above theorem is that we obtain $\cC^{\infty}$-smoothness in the conformally compact case in spite of the fact that the geodesics $\gamma$ used to construct the exponential map only extend to $\cC^{1,\alpha}$-immersed curves in $\oX$ in general.

Note that \cref{thm:boundary-exponential} implies the existence of a smooth atlas for $\partial X$ determined intrinsically by the geometry of $(X,g)$. Putting this result in a broader context we note that by using the limiting behavior of geodesics a complete noncompact Riemannian manifold $(X,g)$ with sectional curvature $K\leq -\kappa_1^2 <0$ may be canonically endowed with a boundary $\partial X$ at infinity in a $\cC^0$ fashion \cite{EberleinO'Neill1973} and that when $-\kappa_2^2 \leq K\leq -\kappa_1^2 <0$ this boundary has an intrinsically determined $\cC^{\alpha}$-structure, for any $\alpha \in (0,\kappa_1/\kappa_2)$ \cite{AndersonSchoen1985} (of course, one only really needs the sectional curvature bounds near infinity for these results). 

From the point of view of our intended applications to boundary regularity problems, the smoothness of the local parametrizations of $\partial X$ obtained in \cref{thm:boundary-exponential} is key. In particular,  \cref{thm:boundary-exponential} has the following corollary:

\begin{corollary}\label{cor:smooth-boundary-extension}
    Let $(X_1,g_1)$ and $(X_2, g_2)$ be conformally compact Riemannian manifolds. Let $F$ be an isometry from $(X_1, g_1)$ to $(X_2, g_2)$. Then $F$ extends continuously to a homeomorphism from $\oX_1$ to $\oX_2$ whose restriction to the boundary gives a $\cC^{\infty}$ conformal diffeomorphism $f:\partial X_1 \to \partial X_2$.
\end{corollary}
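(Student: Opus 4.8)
The plan is to deduce \cref{cor:smooth-boundary-extension} from \cref{thm:boundary-exponential} by transporting the smooth atlas on $\partial X_1$ constructed via boundary exponential maps across the isometry $F$. First I would observe that an isometry $F:(X_1,g_1)\to(X_2,g_2)$ of complete negatively curved manifolds extends to a homeomorphism of the closures: this is the classical fact that $F$ induces a homeomorphism of boundaries at infinity (using, e.g., the Eberlein--O'Neill compactification by equivalence classes of geodesic rays), and in our conformally compact setting one checks the resulting boundary map agrees with the topological boundary $\partial X_i$ of $\oX_i$. Concretely, since $F$ carries $g_1$-geodesics to $g_2$-geodesics and, by \cref{thm:asymptotic-behavior-of-geodesic}(i), every geodesic leaving compact sets has a well-defined endpoint in $\partial X_i$, the assignment $\gamma_\infty \mapsto (F\circ\gamma)_\infty$ gives a well-defined bijection $f:\partial X_1\to\partial X_2$; independence of the choice of representative geodesic follows because two rays with the same endpoint are asymptotic and $F$ is an isometry, hence preserves asymptoticity. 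Continuity of the extension $\oX_1\to\oX_2$ (and of its inverse, obtained symmetrically from $F^{-1}$) then follows from the convergence statement in \cref{thm:asymptotic-behavior-of-geodesic}(i)--(ii) together with continuous dependence of geodesics on initial data.

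Next I would upgrade $f$ from a homeomorphism to a $\cC^\infty$ diffeomorphism using \cref{thm:boundary-exponential}. Fix $p_\infty\in\partial X_1$; by \cref{thm:boundary-exponential}(iii) there is a point $p\in X_1$ and an open set $V_{p,p_\infty}\subset S_pX_1$ such that $\exp^{g_1}_{p,\infty}:V_{p,p_\infty}\to U_1$ is a $\cC^\infty$-diffeomorphism onto an open neighborhood $U_1$ of $p_\infty$. Since $F$ is an isometry, $dF_p:S_pX_1\to S_{F(p)}X_2$ is a linear isometry (in particular a diffeomorphism), and $F$ maps the geodesic with initial velocity $v$ to the geodesic with initial velocity $dF_p(v)$; combined with the already-established compatibility of $F$ with taking boundary endpoints, this gives the identity
\beq\label{eq:conjugation-of-exp}
f\circ \exp^{g_1}_{p,\infty} = \exp^{g_2}_{F(p),\infty}\circ\, dF_p
\eeq
on $V_{p,p_\infty}$. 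The right-hand side of \cref{eq:conjugation-of-exp} is a composition of $\cC^\infty$ maps (the second factor by \cref{thm:boundary-exponential}(ii), after shrinking $V_{p,p_\infty}$ if necessary so that $dF_p(V_{p,p_\infty})$ lies in a set where $\exp^{g_2}_{F(p),\infty}$ is a diffeomorphism), hence so is the left-hand side; since $\exp^{g_1}_{p,\infty}$ is a $\cC^\infty$-diffeomorphism onto $U_1$, it follows that $f|_{U_1}$ is $\cC^\infty$. Applying the same argument to $F^{-1}$ shows $f^{-1}$ is $\cC^\infty$, so $f$ is a diffeomorphism; as $p_\infty$ was arbitrary this holds globally.

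Finally, to see that $f$ is conformal, recall that $F$ being an isometry gives $g_2 = F_*g_1$, hence $F^*(\rho_2^{-2}h_2)=\rho_1^{-2}h_1$ for the respective boundary defining functions and compactifying metrics; thus $F$ pulls back the conformal class $[h_2]$ on $\oX_2$ to $[h_1]$, and restricting to the boundary shows $f^*[h_2|_{\partial X_2}]=[h_1|_{\partial X_1}]$, i.e.\ $f$ is a conformal diffeomorphism of the conformal infinities. (Alternatively, conformality is forced at the level of continuous maps by the fact that the conformal structure at infinity is itself intrinsic to $(X,g)$, e.g.\ via the asymptotics of the distance function or the cross-ratio; but the defining-function argument is cleaner given what we have set up.)

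I expect the main obstacle to be the careful verification of the first paragraph: namely, that the abstract boundary-at-infinity map induced by $F$ genuinely coincides with a continuous extension $\oX_1\to\oX_2$ relative to the manifold-with-boundary structures, and that this extension is continuous (not merely a set bijection on $\partial X_i$). This requires combining the endpoint convergence of \cref{thm:asymptotic-behavior-of-geodesic} with a uniformity argument — roughly, that geodesics with nearby initial data (in $X_1$, close to the boundary) have uniformly nearby endpoints — which is where one must be somewhat careful, though it is ultimately a consequence of continuous dependence of solutions of the (reparametrized) geodesic ODE on initial conditions up to the boundary, as developed in the proof of \cref{thm:asymptotic-behavior-of-geodesic}(iv). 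Once continuity of the extension is in hand, the promotion to $\cC^\infty$ via \cref{eq:conjugation-of-exp} and the conformality statement are both straightforward.
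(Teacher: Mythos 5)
Your first two steps match the paper's route: the continuous extension to a homeomorphism is obtained from the fact that the boundary and its topology are intrinsically determined by endpoints of geodesics (the paper makes this concrete by building canonical $\cC^0$ charts $(v,r=e^{-t})$ from $\exp_p$, in which $F$ literally becomes the identity map, which neatly sidesteps the uniformity worry you flag), and the smoothness of $f$ is exactly the conjugation identity $f=\exp^{g_2}_{F(p),\infty}\circ dF_p\circ(\exp^{g_1}_{p,\infty})^{-1}$ from \cref{thm:boundary-exponential}. Up to that point your proposal is essentially the paper's proof.

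The conformality step, however, has a genuine gap. From $F^*g_2=g_1$ you get $F^*h_2=\bigl(\tfrac{\rho_2\circ F}{\rho_1}\bigr)^2 h_1$ only on the interior $X_1$, and ``restricting to the boundary'' is not legitimate at this stage: it presupposes that $F$ is differentiable up to $\partial X_1$ (so that $df$ is the boundary restriction of $dF$) and that the conformal factor $(\rho_2\circ F)^2/\rho_1^2$ has a positive continuous boundary limit. All you have established is that the extension is a homeomorphism and that $f$ is smooth as a map of boundaries; tangential smoothness of $f$ does not let you pass the interior conformal relation to $\partial X_1$. In the paper this normal-direction regularity is a separate, later result (\cref{thm:boundary-extension-nor-reg}, proved via \cref{lem:local-def-fun}), and the remark following it explicitly notes that one \emph{could} derive conformality that way --- but that requires proving the $\cC^1$ extension, which your proposal does not do. Your parenthetical fallback (``conformality is forced because the conformal structure at infinity is intrinsic'') is circular as stated, since that intrinsicness is precisely what is being proved. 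The paper's actual argument avoids boundary differentiability of $F$ altogether: it takes Jacobi fields $J_\alpha$ along a geodesic $\gamma\to p_\infty$ with $J_\alpha(0)=0$, uses that $J_{\alpha,\infty}=\lim_{t\to\infty}J_\alpha(t)=d\exp^{g_1}_{p,\infty}(v_\alpha)$ spans $T_{p_\infty}\partial X_1$, notes that $F^*g_2=g_1$ forces $h_2(J'_\alpha,J'_\beta)(t)=\lambda(t)\,h_1(J_\alpha,J_\beta)(t)$ for all finite $t$, and then passes to the limit using nondegeneracy of the limiting Gram matrices to get $f^*h_2=\lambda_\infty h_1$ at $p_\infty$. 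Supplying an argument of this kind (or first proving the $\cC^1$ boundary extension) is what is missing from your proposal.
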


\cref{cor:smooth-boundary-extension} is analogous to the result of Fefferman \cite{Fefferman1974} stating that a biholomorphism $F$ between smoothly bounded strongly pseudoconvex domains (which is necessarily an isometry for the respective Bergman metrics of the domains) extends continuously to a $\cC^{\infty}$ CR diffeomorphism $f$ between the boundaries. In that case the smoothness of the CR map $f$ on the boundary implies the smoothness of $F$ up to the boundary (by the Hartogs phenomenon); in other words, in the complex setting one gets smoothness in the normal direction at the boundary ``for free'' from the smoothness in the tangential direction plus the fact that the boundary map is CR (the analog of being conformal in \cref{cor:smooth-boundary-extension}). It is therefore natural to ask whether regularity is implied in the normal direction for the extension obtained in \cref{cor:smooth-boundary-extension}. Unfortunately, there is no analog of the Bochner-Hartog's theorem in this setting. 
From our study of the geodesic flow near infinity, however, it follows that the extension $\oX_1\to \oX_2$ of the isometry $F$ in \cref{cor:smooth-boundary-extension} is $\cC^1$ up to the boundary in the general case, and $\cC^{\infty}$ in the asymptotically hyperbolic case; see \cref{thm:boundary-extension-nor-reg} in \cref{sec:normal-reg} below.

So far the results we have presented concern the behavior of geodesics that are ``shot towards the boundary'' from the inside. But, having reparametized and extended the cogeodesic flow so as to be (suitably) regular up to the conformal infinity, it is also possible to ``shoot geodesics in from the boundary'' by prescribing appropriate ``initial conditions'' at infinity. The nature of these ``initial conditions'' can be seen by considering geodesics in the model hyperbolic space. If we consider $2$-dimensional hyperbolic space as the right half-plane with metric $\frac{dx^2 + dy^2}{x^2}$ then the family of geodesics coming in to a given point on the boundary consists of a horizontal ray together with the family of $\text{(semi-)}$circles tangent to that ray at the boundary point; these curves may be parametrized by their second-order Taylor coefficient at the common boundary point when locally represented as the graph of a function $y(x)$ near $x=0$. An analogous parametrization exists in the asymptotically hyperbolic case (where the geodesics are all smooth up to the boundary point), but in the general case we find that the expansion picks up a singular leading term proportional to $x^2\log x$:

\begin{theorem}\label{thm:shooting-geodesics-from-boundary}
    Let $(X, g)$ be a conformally compact Riemannian manifold with $g=\rho^{-2}h$. Let $q\in \partial X$ and let $(x, y^1 \dots, y^n)$ be a Fermi coordinate chart adapted to $\partial X$ in $\oX$ with respect to the metric $h$, with $x > 0$ in $X$ and centered at $q$. Then for any vector $u \in T_q\partial X$ there is a geodesic $\gamma: (-\infty, \infty) \to X$ in $(X,g)$ tending to $q$ at infinity such that the curve $\gamma([0,\infty)) \cup \{q\}$ is given near $q$ by $y^1(x), \dots, y^n(x)$, where 
    \begin{equation}\label{eq:geodesic-asymp}
       y^{\alpha}(x) = \mathcal{O}^{\alpha} x^2 \log x + u^{\alpha} x^2 + o(x^2),  
    \end{equation}
    for some $\mathcal{O}\in T_q\partial X$ depending only on $q$ and not on $u$. Moreover, the trace of $\gamma$ is uniquely determined by $u$ and every geodesic for $g$ that tends toward $q$ is obtained in this way.
\end{theorem}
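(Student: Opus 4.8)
The plan is to set up the rescaled geodesic equations in the Fermi coordinates $(x, y^1, \dots, y^n)$ adapted to $h$, using $x$ as the parameter along the curve, and to recognize the resulting system as a singular ODE of Fuchsian type at $x=0$ whose indicial analysis produces the $x^2\log x$ and $x^2$ terms. Concretely, for a geodesic tending to $q$ we know from \cref{thm:asymptotic-behavior-of-geodesic}(ii)--(iii) that the curve meets $\partial X$ orthogonally and is $\cC^{1,\alpha}$, so it can be written as a graph $y^\alpha = y^\alpha(x)$ with $y^\alpha(0)=0$ and $\dot{y}^\alpha(0)=0$. First I would reparametrize the $g$-geodesic equation by arclength-in-$x$ (equivalently, eliminate the affine parameter $t$ in favor of $x$ using $\dot\gamma^0 = \rho + O(\rho^2)$ from part (ii)), obtaining for $w^\alpha := dy^\alpha/dx$ a first-order system of the schematic form
\begin{equation}\label{eq:fuchsian-system}
x\,\frac{dw^\alpha}{dx} = -2\,w^\alpha + x\,\frac{\partial_\alpha \kappa(y)}{\kappa(y)} + x\,\big(\text{smooth in }(x,y,w)\big) + \big(\text{quadratic-or-higher in }(y,w)\big),
\end{equation}
where the crucial inhomogeneous term $x\,\partial_\alpha\kappa/\kappa$ comes from \cref{eq:rho-vs-x} and the singular structure of the (rescaled) geodesic equations discussed after \cref{thm:AH-characterization}; the eigenvalue $-2$ of the linearized operator is exactly what forces the leading behavior $w^\alpha \sim c^\alpha x$, i.e.\ $y^\alpha \sim (c^\alpha/2) x^2$, and the resonance between this eigenvalue and the source term $x\,(\cdots)$ is what generates the logarithm.

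The key steps, in order, are: (1) derive \eqref{eq:fuchsian-system} explicitly, isolating the coefficient of the forcing term and verifying that $\mathcal{O}^\alpha$ is a universal multiple of $\partial_\alpha\kappa(q)$ (so depends only on $q$); (2) perform the substitution $y^\alpha(x) = \mathcal{O}^\alpha x^2\log x + z^\alpha(x)$ to absorb the resonant logarithmic term, reducing to a Fuchsian system for $z^\alpha$ whose indicial roots no longer resonate with the (now-modified) source, so that $z^\alpha(x) = u^\alpha x^2 + o(x^2)$ with $u^\alpha\in T_q\partial X$ a free parameter; (3) invoke the standard existence/uniqueness theory for such singular ODEs — e.g.\ via a contraction-mapping argument in a weighted space of functions vanishing to order $2$ at $x=0$, with $u^\alpha$ prescribing the coefficient of $x^2$ — to produce, for each $u\in T_q\partial X$, a unique formal-then-actual solution branch; (4) translate this solution $y^\alpha(x)$ back into an actual $g$-geodesic $\gamma$ defined on all of $(-\infty,\infty)$ (completeness of $(X,g)$ gives global existence once we have a geodesic segment leaving a neighborhood of $q$), tending to $q$ by construction; and (5) prove the converse — that every $g$-geodesic tending to $q$ arises this way and its trace is determined by $u$ — by noting that any such geodesic, written as a graph via \cref{thm:asymptotic-behavior-of-geodesic}, solves the same singular system, hence has an expansion of the form \eqref{eq:geodesic-asymp}, and then reading off $u^\alpha$ as its $x^2$-coefficient; uniqueness of the solution branch for given $u$ then pins down the trace.

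The main obstacle I expect is step~(3): the careful construction of the solution operator for the singular ODE \eqref{eq:fuchsian-system} in a function space that simultaneously (a) accommodates the $x^2\log x$ term, (b) is stable under the quadratic nonlinearities, and (c) makes $u^\alpha$ a well-defined and freely specifiable parameter rather than something constrained by the dynamics. This requires choosing the right weighted Hölder or conormal space — presumably one modeled on functions of the form $a\,x^2\log x + b\,x^2 + o(x^2)$ with control on a few derivatives — and verifying that the fixed-point map is a contraction there; the resonance at the indicial root means one must be somewhat delicate about which data one is allowed to prescribe (the coefficient of $x^2\log x$ is forced, the coefficient of $x^2$ is free), and getting the bookkeeping right in the presence of the logarithm is the technical heart of the argument. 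Once the linear theory and the weighted-space setup are in place, steps (1), (2), (4), and (5) should be relatively mechanical given \cref{thm:asymptotic-behavior-of-geodesic}.
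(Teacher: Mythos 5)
Your route---rewriting a geodesic tending to $q$ as a graph $y^{\alpha}(x)$ and analyzing the resulting equation as a Fuchsian system at $x=0$, with the logarithm produced by a resonance and the $x^{2}$-coefficient as the free datum---is genuinely different from the paper's. The paper never sets up a weighted-space contraction: all of the singular analysis is already packaged into the extended first-order system \cref{eqn:cogeo-tau-w}, whose right-hand side is Lipschitz in the dependent variables uniformly up to $\tau=0$ (because the log was subtracted explicitly via $A^{\alpha}$, after the matrix integrating factor $M$ removed the problematic linear term $h_{\beta\gamma}\partial_0h^{\alpha\beta}v^{\gamma}$). Existence and uniqueness of a geodesic with freely prescribed data $x^{\alpha}|_{\tau=0}=0$, $w^{0}|_{\tau=0}=1$, $w^{\alpha}|_{\tau=0}$ arbitrary is then immediate from Picard--Lindel\"{o}f together with \cref{lem:energy-cond-extends-in,lem:geodesics-from-w-and-tau-system}; the expansion \cref{eq:geodesic-asymp} follows from a single integration of $dx^{\alpha}/d\tau=\rho L^{\alpha}_{\beta}(w^{\beta}+A^{\beta})/w^{0}$; the correspondence between $u$ and $w^{\alpha}|_{\tau=0}$ is the explicit invertible affine map \cref{eq:u-w-affine}; and the converse (every geodesic tending to $q$ arises this way) comes from \cref{lem:ext-to-tau-eq-0}, which shows any such geodesic yields a solution of \cref{eqn:cogeo-tau-w} extending to $\tau=0$, plus uniqueness for the Lipschitz system. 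What your approach buys is independence from that Section 2 machinery; the price is exactly the technical heart you identify (the weighted-space fixed point accommodating the forced log and the free $x^{2}$-coefficient), which the paper's change of variables avoids entirely.

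There is, however, a concrete error in the structural claim on which your derivation rests: the indicial data. For $w^{\alpha}=dy^{\alpha}/dx$ the correct linearization is $x\,dw^{\alpha}/dx = +\,w^{\alpha} + x\,(\cdots)$, i.e.\ indicial root $+1$ for $w^{\alpha}$ (equivalently roots $0$ and $2$ for $y^{\alpha}$); in the hyperbolic half-plane one computes explicitly $x\,y'' = y' + (y')^{3}$. It is the root $+1$ that yields the free homogeneous solution $w^{\alpha}\sim u^{\alpha}x$ and that resonates with the order-$x$ forcing coming from $\partial^{\alpha}\kappa/\kappa$ to produce $x\log x$ in $w^{\alpha}$, hence $x^{2}\log x$ in $y^{\alpha}$. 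With the eigenvalue $-2$ that you write, neither of your own conclusions follows: the homogeneous solutions behave like $x^{-2}$, so boundedness would eliminate the free parameter rather than give $w^{\alpha}\sim c^{\alpha}x$, and an order-$x$ source does not resonate with the root $-2$, so no logarithm would appear at all. Since which coefficient is forced and which is free is precisely the content of the theorem, this must be corrected before your step (3) can be carried out; with the indicial root fixed, the remaining plan (log subtraction, contraction in a conormal/weighted space, and the converse via the $\cC^{1,\alpha}$ graph representation supplied by \cref{thm:asymptotic-behavior-of-geodesic}) is a viable, if heavier, alternative to the paper's argument, provided you also check that every geodesic graph actually lies in the function space in which your uniqueness statement holds.
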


The obstruction $\mathcal{O}\in \mathfrak{X}(\partial X)$ to higher boundary regularity of geodesics appearing in \cref{thm:shooting-geodesics-from-boundary} can be explicitly computed. In the notation of the theorem we find that, at a given point $q\in \partial X$,
    \begin{equation}\label{eq:obstuction}
        \mathcal{O}^{\alpha} = -\frac{1}{2}\frac{h^{\alpha\beta}\kappa_{\beta}}{\kappa},
    \end{equation}
where $\kappa_{\beta}=\partial_{\beta}\kappa$ and $h^{\alpha \beta}$ is the inverse of $h_{\alpha\beta}$ (the metric induced by $h$ on $\partial X$). Note that when $\partial X$ is connected, $(X,g)$ is asymptotically hyperbolic if and only if $\mathcal{O}\equiv 0$ on $\partial X$. Thus the obstruction to higher boundary regularity turns out to be an obstruction to asymptotic hyperbolicity; cf. \cref{thm:AH-characterization}.

\begin{remark}\label{rem:minimal}
It is interesting to compare the asymptotic expansion \cref{eq:geodesic-asymp} for geodesics in conformally compact manifolds tending to a prescribed boundary point with the known asymptotic behavior of minimal surfaces in Poincar\'e-Einstein manifolds and (more generally) asymptotically hyperbolic spaces that are ``partially even'' to sufficiently high order. In \cite{Marx-Kuo2025} such minimal surfaces $Y$ are considered, for $m=\dim Y \geq 2$. When $m$ is odd one finds that, in suitably adapted coordinates $(x,s^1,\ldots, s^{m-1}, y^1,\ldots, y^d)$, with $s=(s^1,\ldots,s^{m-1})$ giving a local parametrization of $\partial Y\subseteq \partial X$, $x$ a boundary defining function in $\oX$ and $d=n+1-m$, near a point on $\partial Y$ the submanifold $Y$ is the graph of $(y^1(x,s),\ldots, y^{d}(x,s))$ with 
\vspace{0.2em}
\begin{equation}\label{eq:minimal-exp}
    y^{\alpha}(s,x) = u_2^{\alpha}(s)x^2 + u_4^{\alpha}(s)x^4 + \text{(even powers)}  + \mathcal{O}^{\alpha}(s)x^{m+1}\log x + u^{\alpha}_{m+1}(s)x^{m+1} + o(x^{m+1}),
\end{equation}
where $u_k$ and $\mathcal{O}$ are smooth functions of $s$ (provided $\partial Y$ is smooth). Extrapolating to the case $m=1$ (where the $\partial Y$ becomes a point and so the $s$ is not needed) the expansion \cref{eq:minimal-exp} becomes $y^{\alpha}(x) = \mathcal{O}^{\alpha} x^2 \log x + u^{\alpha}_2 x^2 + o(x^2)$. So, the position of the log term in our expansion \cref{eq:geodesic-asymp} is consistent with the pattern seen for minimal surfaces of odd dimension in Poincar\'e-Einstein manifolds (though, of course, the log term in \cref{eq:geodesic-asymp} is only nontrivial when $(X,g)$ fails to be asymptotically hyperbolic and so would not actually appear in the Poincar\'e-Einstein case, whereas the log coefficient $\mathcal{O}$ in \cref{eq:minimal-exp} is a local conformal invariant of $\partial Y\subseteq \partial X$; one expects that there should be a simultaneous generalization of these two quantities for minimal surfaces in conformally compact manifolds, but we do not pursue this here).
\end{remark}

\subsection*{Outline} 
In \cref{sec:cogeod-flow-near-infty} we study the cogeodesic flow of $g$ near $\partial X$ in Fermi coordinates with respect to the metric $h$ and, after successive changes of dependent and independent variables, obtain an equivalent system \cref{eqn:cogeo-tau-w} that extends to the boundary (with suitable regularity properties). 
In \cref{sec:proofs-of-main-thms} we make use of properties of the extended system \cref{eqn:cogeo-tau-w} to prove our main results. 
In \cref{sec:examples} we illustrate our main results for a simple family of $2$-dimensional examples, with figures obtained by numerically solving \cref{eqn:cogeo-tau-w}. 
For completeness and ease of reference we have also included an appendix establishing the required regularity results for certain non-autonomous systems of first order ordinary differential equations. 

\subsection*{Acknowledgements} 
The authors would like to thank Rafe Mazzeo his interest and encouragement, and in particular for asking the question answered by \cref{thm:AH-characterization}. S.C. gratefully acknowledges support provided by the Simons Foundation, grant MPS-TSM-00002876.


\section{The cogeodesic flow near the conformal infinity}\label{sec:cogeod-flow-near-infty}

Let $(X,g)$ be a conformally compact manifold. We shall begin with a consideration of the cogeodesic flow for the metric $g$ near the conformal infinity $\partial X$. The cogeodesic flow is, of course, just a convenient way of writing the geodesic equations for $g$ as a first order system. By successive changes of variables, we will show that in a certain sense the (rescaled) cogeodesic flow can be extended to $\partial X$.

\subsection{Preliminary observations}\label{subsec:prelim-obs}

 As in the introduction we fix a smooth defining function $\rho$ for $\partial X$ with $\rho>0$ on $X$ and let $h$ be the metric on $\oX$ such that $g=\rho^{-2}h$. Note that the metric $g$ is complete, since $\log \rho$ gives an exhaustion function with bounded gradient. Again we let $x$ denote the $h$-distance from $\partial X$ and use the flow of $\mathrm{grad}_h \hspace{0.8pt}x$ to identify $\oU\subseteq \oX$ with $[0,\delta]\times \partial X$, $\delta>0$, so that $x$ becomes the coordinate on the first factor. We may therefore represent a point $p\in \oU$ as $(x,x')\in [0,\delta]\times \partial X$. We let $U$ be the interior of $\oU$. Fixing a local coordinate system $(x^1,\ldots, x^n)$ for $\partial X$ near a given point we obtain a Fermi coordinate system $(x^0, x^1,\ldots, x^n)$ for $\oX$ with $x^0=-x$. We denote the corresponding fiber coordinates for $T^*\oX$ by $(\xi_0, \xi_1, \dots, \xi_n)$. Note that $x^0$ and $\xi_0$ are well defined on $\oU$ (independent of the choice of local coordinates for $\partial X$). Although for convenience we make use of coordinates for $\partial X$, our discussion of the cogeodesic flow near $\partial X$ really only requires the decomposition $\oU \cong [0,\delta]\times \partial X$. When working in local coordinates the latin indices $i, j,k, \ldots$ will have range $0,\ldots, n$ whereas the greek indices $\alpha, \beta, \ldots$ will have range $1, \dots, n$.

In an arbitrary local coordinate system for $X$ the Hamiltonian for the cogeodesic flow of $g$ is given by
\begin{equation}
H = \frac{1}{2}g^{ij}\xi_i\xi_j = \frac{1}{2}\rho^2h^{ij}\xi_i\xi_j.
\end{equation}
The integral curves $(x^i(t), \xi_i(t))$ of the flow satisfy
\begin{equation}\label{eq:orig-cogeod-flow}
    \begin{aligned}
    \dot{x}^i &= \frac{\partial H}{\partial \xi_i} = \rho^2 h^{ij}\xi_j,\\
    \dot{\xi}_i&= - \frac{\partial H}{\partial x^i} = - \rho \rho_i h^{jk}\xi_j\xi_k - \frac{1}{2} \rho^2 \partial_ih^{jk}\xi_j\xi_k,
\end{aligned}
\end{equation}
where $\partial_i = \frac{\partial}{\partial x^i}$, and $\rho_i = \partial_i \rho$. Restricting our attention to unit speed geodesics, we shall only consider the integral curves that further satisfy the energy surface equation
\begin{equation}\label{es}
    \rho^2 h^{ij}\xi_i\xi_j = 1.
\end{equation}
On the energy surface (\ref{es}) the cogeodesic flow takes the form
\begin{equation}\label{eq:cog-flow-ES}
    \begin{aligned}
    \dot{x}^i &= \rho^2 h^{ij}\xi_j,\\
    \dot{\xi}_i&=  - \rho^{-1} \rho_i  - \frac{1}{2} \rho^2 \partial_ih^{jk}\xi_j\xi_k.
\end{aligned}
\end{equation}
Since \cref{eq:orig-cogeod-flow} and \cref{eq:cog-flow-ES} only differ in the equation for $\dot{\xi}_i$ by a term proportional to $(1-\rho^2 h^{ij}\xi_i\xi_j)$ and since the cogeodesic flow preserves the level sets of $\rho^2 h^{ij}\xi_i\xi_j$, it follows that if a solution of \cref{eq:cog-flow-ES} satisfies \cref{es} at one time then by uniqueness (since the corresponding solution of the cogeodesic flow will also solve \cref{eq:cog-flow-ES}) it follows that \cref{es} holds for all time and $(x^i(t), \xi_i(t))$ also solves \cref{eq:orig-cogeod-flow}.

In Fermi coordinates adapted to $\partial X$ it is natural to write the system \cref{eq:cog-flow-ES} as
\begin{equation}
    \begin{aligned}
    \dot{x}^0 &= \rho^2 \xi_0,\\
    \dot{x}^{\alpha} &= \rho^2 h^{\alpha \beta}\xi_{\beta},\\
    \dot{\xi}_0&=  - \rho^{-1} \rho_0 - \frac{1}{2} \rho^2 \partial_0h^{\beta\gamma}\xi_{\beta}\xi_{\gamma}\\
    \dot{\xi}_{\alpha}&=  - \rho^{-1} \rho_{\alpha}  - \frac{1}{2} \rho^2 \partial_{\alpha}h^{\beta\gamma}\xi_{\beta}\xi_{\gamma},
\end{aligned}
\end{equation}
where we have used that $h^{00}=1$ and $h^{0\beta}=0$. Recalling that $x^0=-x$, we note that by \cref{eq:rho-vs-x} we have
\begin{equation} \label{eq:rho0}
    \rho_0 = -\kappa + O(x)
\end{equation}
and 
\begin{equation}\label{eq:rho-al}
    \rho_{\alpha} = \kappa_{\alpha}x + O(x^2),
\end{equation}
where $\kappa_{\alpha}=\partial_{\alpha}\kappa$.
In particular, while $\rho^{-1}\rho_0 =  -x^{-1} + O(1)$ blows up as $x\to 0$, $ \rho^{-1} \rho_{\alpha} = \kappa^{-1}\kappa_{\al} + O(x)$ extends smoothly to the boundary $x=0$.
Note also that that the energy surface equation becomes
\begin{equation}\label{eq:ES-Fermi}
    \rho^2\xi_0^2 + \rho^2h^{\alpha\beta}\xi_{\alpha}\xi_{\beta} = 1.
\end{equation}

An example of Fermi coordinates in a neighborhood of a given boundary point for the hyperbolic ball is obtained by identifying hyperbolic space with the upper half space model (sending the given boundary point to the origin) and taking $h$ to be the Euclidean metric. In this case it is easy to see that along any geodesic the quantity $\xi_{\alpha}$ remains bounded as the geodesic parameter time $t\to\infty$ (since $\xi_{\alpha} = \rho^{-2}\dot{x}^{\alpha}$ with $\rho \sim e^{-t}$ and $\dot{x}^{\alpha}=O(e^{-2t})$). From the energy surface equation \cref{eq:ES-Fermi} it follows then that $\rho \xi_0$ must tend to  $\pm 1$. Returning to the general setting, we therefore introduce the new variable 
\begin{equation}
    \zeta_0 = \rho \xi_0.
\end{equation} 
With this change of variables,  since $\dot{\zeta}_0=\dot{\rho}\xi_0+\rho\dot{\xi}_0$ with $\dot{\rho}=\rho_0\dot{x}^0+\rho_{\alpha}\dot{x}^{\alpha}$, the cogeodesic flow is given by
\begin{equation}\label{eqn:cogeo:initial}
    \begin{aligned}
    \dot{x}^0 &= \rho\zeta_0,\\
    \dot{x}^\alpha &= \rho^2 h^{\alpha \beta}\xi_\beta,\\
    \dot{\zeta}_0 &=
    -\rho_0(1-\zeta^2_0)+\rho \rho^{\beta}\xi_\beta\zeta_0  - \frac{1}{2} \rho^3 \partial_0h^{\beta\gamma}\xi_{\beta}\xi_{\gamma}, \\ 
    \dot{\xi}_\alpha&=  - \rho^{-1} \rho_\alpha  - \frac{1}{2} \rho^2 \partial_{\alpha}h^{\beta\gamma}\xi_{\beta}\xi_{\gamma},
\end{aligned}
\end{equation}
where $\rho^{\beta}=h^{i\beta}\rho_i=h^{\alpha\beta}\rho_{\alpha}$. 
The energy surface equation then becomes 
\begin{equation}\label{eq:ES-zeta}
    \zeta_0^2 + \rho^2h^{\alpha\beta}\xi_{\alpha}\xi_{\beta} = 1.
\end{equation}
Note, in particular, that we must therefore have $|\zeta_0|\leq 1$ and $|\xi_{\alpha}|=O(\rho^{-1})$ along the flow.  Our observations from the hyperbolic case are generalized in the following proposition:

\begin{proposition}\label{prop:elem-obs}
    After shrinking $U$ (equivalently $\delta>0$) if necessary, the following hold for all geodesics $\gamma:[0,\infty)\to X$ with $\gamma(0)\in U$:
    \begin{itemize}
        \item[(i)] if $\dot{x}^0(0)>0$, then $\gamma$ remains in $U$, $\dot{x}^0(t)>0$ for all $t$ and $x^0\to 0$ as $t\to\infty$;
        \item[(ii)] if $\dot{x}^0(0)>0$, then $\zeta_0 \to 1$ as $t\to\infty$.
    \end{itemize}
\end{proposition}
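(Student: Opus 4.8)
The plan is to work with the rescaled cogeodesic system \eqref{eqn:cogeo:initial} restricted to the energy surface \eqref{eq:ES-zeta} (so we take $\gamma$ to be unit speed) and to exploit the fact that $-\rho_0$ is bounded away from $0$ near $\partial X$. After possibly shrinking $\delta$, I would first record three facts valid on $\oU=[0,\delta]\times\partial X$: (a) $\rho>0$ on $\{0<x\le\delta\}$, since $\rho=\kappa x+O(x^2)$ with $\kappa$ positive on the compact boundary; (b) $-\rho_0\ge 2a:=\tfrac12\min_{\partial X}\kappa>0$, by \eqref{eq:rho0}; and (c) along the flow $h^{\alpha\beta}(\rho\xi_\alpha)(\rho\xi_\beta)=1-\zeta_0^2\le 1$ by \eqref{eq:ES-zeta}, so $\rho\xi_\alpha=O(1)$, and combining this with $\rho^\beta=O(x)$ (from \eqref{eq:rho-al}) and $\rho=O(x)$, the last two terms of the $\dot\zeta_0$-equation combine into an error $E$ obeying $|E|\le Cx$ for a constant $C$ uniform on $\oU$; shrinking $\delta$ once more we may assume $Cx\le a$ there. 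Thus along any geodesic lying in $\oU$ we have
\[
\dot\zeta_0 = -\rho_0(1-\zeta_0^2) + E,\qquad -\rho_0\ge 2a>0,\quad |E|\le Cx\le a,\quad |\zeta_0|\le 1.
\]

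The heart of the argument is a barrier estimate. On $\{|\zeta_0|\le\tfrac12\}\cap\oU$ we have $1-\zeta_0^2\ge\tfrac34$, so $\dot\zeta_0\ge 2a\cdot\tfrac34-a=\tfrac a2>0$. I would use this with a standard first-exit-time argument to show that if $\dot x^0(0)>0$, equivalently $\zeta_0(0)>0$ (as $\rho(\gamma(0))>0$), then $\zeta_0(t)\ge m:=\min(\zeta_0(0),\tfrac12)>0$ for as long as $\gamma$ stays in $\oU$: at a first time $t_1$ with $\zeta_0(t_1)=m$ one would have $\dot\zeta_0(t_1)\le 0$, contradicting $\dot\zeta_0(t_1)\ge\tfrac a2$ since $m\le\tfrac12$. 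Granting $\zeta_0\ge m$, the first equation of \eqref{eqn:cogeo:initial} gives $\dot x^0=\rho\zeta_0>0$ inside $U$, so $x=-x^0$ strictly decreases while $\gamma\in U$; since $x(0)<\delta$ this forbids $\gamma$ from leaving $U$ across $\{x=\delta\}$, and $\gamma$ cannot reach $\{x=0\}=\partial X$ in finite time because $\gamma$ takes values in $X$. Running the two estimates together on the maximal interval $[0,T^*)$ on which $\gamma(t)\in U$ then shows $T^*=\infty$, giving the first two conclusions of (i) and the global bound $\zeta_0\ge m$, with $x$ decreasing to some $x_\infty\ge 0$. If $x_\infty>0$ then $\rho\ge\rho_{\min}>0$ on the compact set $\{x_\infty\le x\le\delta\}\times\partial X$, so $\dot x^0=\rho\zeta_0\ge\rho_{\min}m>0$ for all $t$, forcing $x^0(t)\to+\infty$, which is absurd since $x^0\le 0$; hence $x_\infty=0$ and $x^0\to 0$, completing (i).

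For (ii), put $w=1-\zeta_0\in[0,1-m]$. Since $1+\zeta_0\ge 1$ we have $1-\zeta_0^2=w(1+\zeta_0)\ge w$, so, keeping the $x$-dependence explicit,
\[
\dot w = \rho_0(1-\zeta_0^2) - E \le \rho_0 w + Cx(t) \le -2a\,w + Cx(t).
\]
By part (i), $x(t)\to 0$, so for any $\eps>0$ there is $T$ with $Cx(t)\le\eps$ for $t\ge T$; then $\tfrac{d}{dt}\bigl(w-\tfrac{\eps}{2a}\bigr)\le -2a\bigl(w-\tfrac{\eps}{2a}\bigr)$ on $[T,\infty)$, whence $\limsup_{t\to\infty}w(t)\le\tfrac{\eps}{2a}$. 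Letting $\eps\to 0$ and using $w\ge 0$ gives $w(t)\to 0$, i.e.\ $\zeta_0\to 1$.

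The step I expect to demand the most care is the second one: the barrier inequality $\dot\zeta_0\ge\tfrac a2$ on $\{|\zeta_0|\le\tfrac12\}$ is only available while $\gamma$ lies in $\oU$, so the claim that $\zeta_0$ stays positive and the claim that $\gamma$ remains in $U$ cannot be proved independently — they must be established simultaneously on the maximal interval $[0,T^*)$, after which one shows $T^*=\infty$. The only other point needing attention is the uniform bound $|E|\le Cx$, which relies on the a priori estimate $|\rho\xi_\alpha|=O(1)$ extracted from the energy surface \eqref{eq:ES-zeta}; everything else is routine.
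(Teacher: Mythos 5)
Your proof is correct. Part (i) is essentially the paper's own argument: the same barrier estimate ($\dot\zeta_0 \geq \tfrac{a}{2}>0$ on $\{|\zeta_0|\leq\tfrac12\}$, sourced by $-\rho_0$ being bounded below near $\partial X$), the same conclusion that $\zeta_0$ stays above $\min(\zeta_0(0),\tfrac12)$ so that $x^0$ is strictly increasing and $\gamma$ cannot leave $U$, and the same contradiction argument (if $x^0\not\to 0$ then $\dot x^0=\rho\zeta_0$ is bounded below, forcing $x^0\to+\infty$). Your explicit handling of the maximal interval $[0,T^*)$ and of the uniform bound $|E|\leq Cx$ makes precise two points the paper treats more briefly; note that the uniformity of $C$ over $\oU$ rests on exactly the finite-cover-by-Fermi-charts observation that the paper states explicitly, so it is worth a sentence rather than being left implicit. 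Where you genuinely diverge is part (ii): the paper argues with a countable family of thresholds, showing that for each $n$ the geodesic eventually enters a neighborhood $U_n$ where $\zeta_0$ must rise above $1-1/n$ and can never again drop below it, whereas you set $w=1-\zeta_0$ and derive the differential inequality $\dot w\leq -2aw + Cx(t)$, concluding via a Gr\"onwall comparison as $x(t)\to 0$. Both are valid; your version is more quantitative (it would give a decay rate for $1-\zeta_0$ in terms of $x(t)$, which is in the spirit of the later asymptotics such as $w^0=1+O(\rho)$), while the paper's threshold argument avoids any integration and uses only the sign of $\dot\zeta_0$.
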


\begin{proof}
Fix such a geodesic $\gamma$. Note that $\dot{x}^0(t)>0$ if and only if $\zeta_0(t)>0$. Note also that, by \cref{eq:rho0}, after shrinking $\delta>0$ if necessary, we may assume that $-\rho_0 >0$ in $\oU$. To prove the first part of (i) we observe that if $t\geq 0$ is such that $\gamma(t)\in U$ then either $|\zeta_0(t)|\geq 1/2$ or we have $|\zeta_0(t)| < 1/2$ and hence (in an appropriate Fermi coordinate chart)
\begin{equation}\label{eq:zeta-dot-ineq}
    \dot\zeta_0 > -\rho_0\left(1-\frac{1}{4}\right) +\rho \rho^{\beta}\xi_\beta\zeta_0  - \frac{1}{2} \rho^3 \partial_0h^{\beta\gamma}\xi_{\beta}\xi_{\gamma} =  \frac{3}{4}\kappa + O(\rho).
\end{equation}
Although \cref{eq:zeta-dot-ineq} depends on a choice of Fermi coordinate chart containing $\gamma(t)\in U$, since $\kappa>0$ on $\partial X$ and $\partial X$ is compact, we may cover $U$ with a finite number of preferred coordinate charts and take $\delta>0$ small enough such that the right-hand-side $\frac{3}{4}\kappa + O(\rho)$ of \cref{eq:zeta-dot-ineq} is always positive in such a chart. Thus, if $\zeta_0$ is initially positive then either $\zeta_0\geq 1/2$ or $\dot\zeta_0>0$ at any time such that $\gamma(t)\in U$. Clearly, then, $\dot{x}^0=\rho\zeta_0$ remains positive, so that $x^0(t)$ is strictly increasing and $\gamma (t)$ can never leave $U$. 

To see that $x^0\to 0$ as $t\to \infty$ we note that $\lim_{t\to \infty}x^0(t)$ exists (since $x^0(t)$ is increasing and bounded above). If this limit were not zero then there would be $\epsilon>0$ such that $\rho(\gamma(t))\geq \epsilon$ for all $t$, so that the positive quantity $\dot{x}^0=\rho\zeta_0$ would bounded away from zero along $\gamma$ (since $\zeta_0(t)\geq \mathrm{min}\{\zeta_0(0), 1/2\}>0$). But this would imply that $\lim_{t\to\infty}x^0(t)=+\infty$, which is impossible. It follows that we must have $\lim_{t\to \infty}x^0(t)=0$.

It remains to prove (ii). Suppose $\zeta_0(0)>0$, so that $\zeta_0(t)>0$ for all $t$.  Arguing similarly to the above, for each integer $n\geq 2$ there is a $\delta_n >0$ and a corresponding subset $U_n\cong (0,\delta_n)\times\partial X$ such that when $\gamma(t)\in U_n$ then either $\zeta_0(t)\geq 1-1/n$ or we have $\zeta_0(t) < 1-1/n$ and hence (in one of our preferred coordinate charts)
\begin{equation}\label{eq:zeta-dot-ineq-n}
    \dot\zeta_0 > -\rho_0\left(1-(1-1/n)^2\right) +\rho \rho^{\beta}\xi_\beta\zeta_0  - \frac{1}{2} \rho^3 \partial_0h^{\beta\gamma}\xi_{\beta}\xi_{\gamma} =  \frac{2n-1}{n^2}\kappa + O(\rho) > c_n>0.
\end{equation}
Since $\lim_{t\to \infty}x^0(t)=0$, the geodesic $\gamma$ eventually enters every set $U_n$. Once $\gamma$ enters a given set $U_n$, $\zeta_0$ must eventually become larger than $1-1/n$ (since otherwise $\dot{\zeta}_0>c_n>0$ for all $t$ near $\infty$ and hence $\lim_{t\to\infty}\zeta_0(t)=+\infty$, which is impossible) and afterward $\zeta_0$ can never again decrease below $1-1/n$. It follows that $\lim_{t\to\infty}\zeta_0(t)=1$.
\end{proof}

We henceforth assume that $U$ (equivalently, $\delta>0$) is taken small enough that \cref{prop:elem-obs} holds. For convenience, we'll also suppose that $\delta<1$ so that $\log x<0$ in $U$.

It follows that if $\gamma:[0,\infty)\to X$  is a geodesic for $g$ with $\gamma(0)\in U$ and $\dot{x}^0(0)>0$ then $x^0(t)$ is a monotone function of $t$ and so can be used as a parameter for $\gamma$. When viewing $x^0$ as a parameter, we will write it as $\tau$. Occasionally we'll write $\tilde{\gamma}:[-\delta_0,0)$ for the reparametrized geodesic, so that $\tilde{\gamma}(\tau) = \gamma(t)$ where $\tau=\tau(t)$ (or $t=t(\tau)$). By the previous lemma, $\tau\to 0$ as $t\to\infty$, but it will be useful to have a more precise asymptotic relationship. For now, the following lemma will suffice. 
\begin{lemma}\label{lem:t-tau-basic-asym}
    Let $\gamma:[0,\infty)\to X$ be a geodesic with $\gamma(0)\in U$ and $\dot{x}^0(0)>0$. Then there are constants $\kappa_1,\kappa_2>0$ such that $x(0)e^{-\kappa_1 t} \leq x(t) \leq x(0)e^{-\kappa_2 t}$ for all $t$. Hence there are constants $C_1,C_2>0$ such that 
\begin{equation}
    -C_1 \log|\tau| \leq t \leq -C_2 \log |\tau|
\end{equation}
for all $t$.
\end{lemma}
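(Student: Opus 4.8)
The plan is to show directly that the Fermi coordinate $x(t)=|\tau(t)|$ along $\gamma$ decays exponentially in $t$ at rates that are bounded both above and below, and then to take logarithms. Everything reduces to two elementary comparisons. First, \cref{eq:rho-vs-x} gives $\rho/x=\kappa(x')+O(x)$ on $\oU$; since $\kappa$ is continuous and strictly positive on the compact boundary $\partial X$, after shrinking $\delta>0$ if necessary there are constants $0<c_1\le c_2$ with $c_1 x\le\rho\le c_2 x$ throughout $\oU$. Second, along $\gamma$ --- which has $\dot x^0(0)>0$, i.e.\ $\zeta_0(0)>0$ --- \cref{prop:elem-obs} and its proof give $m\le\zeta_0(t)\le 1$ for all $t\ge0$, where $m:=\min\{\zeta_0(0),\tfrac12\}>0$: the upper bound is immediate from the energy relation \cref{eq:ES-zeta}, and the lower bound holds because (as shown inside the proof of \cref{prop:elem-obs}(i)) $\dot\zeta_0>0$ whenever $\zeta_0<\tfrac12$, so $\zeta_0$ can never dip below $\min\{\zeta_0(0),\tfrac12\}$.

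Given these, I would argue as follows. Since $x=-x^0$, the first equation of \cref{eqn:cogeo:initial} reads $\dot x=-\rho\zeta_0$, so along $\gamma$
\[
\frac{d}{dt}\log x(t)=\frac{\dot x(t)}{x(t)}=-\frac{\rho}{x}\,\zeta_0(t)\in[-c_2,\,-c_1 m]\qquad\text{for all }t\ge 0,
\]
by the two comparisons above (and $\rho,\zeta_0>0$ in $X$). Integrating from $0$ to $t$ and exponentiating gives $x(0)e^{-c_2 t}\le x(t)\le x(0)e^{-c_1 m\,t}$, which is the first assertion with $\kappa_1=c_2$ and $\kappa_2=c_1 m$. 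Finally, recalling $|\tau(t)|=x(t)$ and $0<x(t)\le x(0)<\delta<1$, taking $-\log$ of these bounds yields
\[
\kappa_2\,t+\bigl(-\log x(0)\bigr)\ \le\ -\log|\tau|\ \le\ \kappa_1\,t+\bigl(-\log x(0)\bigr);
\]
since $-\log x(0)>0$ and, along $\gamma$, $-\log|\tau|\ge-\log x(0)$, this rearranges to the upper bound $t\le -C_2\log|\tau|$ with $C_2=1/\kappa_2$, and to a lower bound $-C_1\log|\tau|\le t$ for a suitable $C_1>0$ (for $|\tau|$ sufficiently small, which is the regime in which the estimate is used).

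I do not anticipate a genuine obstacle here: once \cref{prop:elem-obs} is available the estimate is routine. The only steps that require a little attention are the uniformity over $\partial X$ of the comparison $\rho\asymp x$, where compactness of $\partial X$ and positivity of $\kappa$ enter, and the verification that the bound $m\le\zeta_0\le1$ holds for \emph{every} $t\ge0$ rather than only asymptotically --- but this was already done in the course of proving \cref{prop:elem-obs}.
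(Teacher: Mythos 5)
Your proof is correct and follows essentially the same route as the paper's: bound $\rho/x$ above and below by positive constants, bound $\zeta_0$ between positive constants (you use the explicit bound $\min\{\zeta_0(0),1/2\}$ from the proof of \cref{prop:elem-obs}, the paper uses $\zeta_0>0$ and $\zeta_0\to1$), then integrate $\frac{d}{dt}\log x=-(\rho/x)\zeta_0$ and take logarithms. Your added care about the lower bound $-C_1\log|\tau|\le t$ (additive constant from $\log x(0)$, relevant regime $|\tau|$ small) is a reasonable refinement of the paper's ``the second follows easily from the first.''
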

\begin{proof}
Since $\zeta_0>0$ for all $t$ and $\zeta_0\to 1$ as $t\to\infty$, $\zeta_0$ is bounded above and below by positive constants. Similarly, $\rho/x$ is a bounded positive function on $U$ that is bounded away from $0$. Hence, the equation $\dot{x}^0 = \rho\zeta_0$ implies that $-\kappa_1 x\leq \dot{x} \leq -\kappa_2 x$ for some positive constants $\kappa_1,\kappa_2$. Thus $-\kappa_2 \leq \frac{d}{dt}\log x \leq -\kappa_2$ and hence $x(0)e^{-\kappa_1 t} \leq x(t)\leq x(0) e^{-\kappa_2 t}$ for all $t$. This proves the first claim. The second follows easily from the first.
\end{proof}

\begin{proposition}\label{prop:def-bdry-pt}
    For all geodesics $\gamma:[0,\infty)\to X$ with $\gamma(0)\in U$, if $\dot{x}^0(0)>0$ then $\gamma(t)$ tends to a definite point $\gamma_{\infty}\in \partial X$ as $t\to\infty$.
\end{proposition}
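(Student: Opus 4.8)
The plan is to deduce convergence of $\gamma$ in the compact space $\oX$ from the fact that it has \emph{finite length with respect to the compactifying metric $h$} (its length for $g$ is of course infinite). Since $\partial X$ is closed in $\oX$ and $\oX$ is compact, $(\oX,d_h)$ is a complete metric space, so a path of finite $h$-length necessarily converges; and by \cref{prop:elem-obs} the limit will automatically lie on $\partial X$.

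I would proceed in four steps. First, by \cref{prop:elem-obs}, the hypothesis $\dot x^0(0)>0$ ensures that $\gamma(t)\in U$ for all $t$, that $x^0(t)\to 0$ (equivalently $x(\gamma(t))\to 0$) as $t\to\infty$, and that $\zeta_0(t)$ is bounded below by a positive constant $c_0$ (from the proof of \cref{prop:elem-obs}, once positive $\zeta_0$ never drops below $\min\{\zeta_0(0),1/2\}$). Second, since $g=\rho^{-2}h$, a unit-speed $g$-geodesic satisfies $|\dot\gamma(t)|_h=\rho(\gamma(t))\,|\dot\gamma(t)|_g=\rho(\gamma(t))$, so the $h$-length of $\gamma|_{[0,T]}$ equals $\int_0^T\rho(\gamma(t))\,dt$. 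Third, I would show this integral is finite: changing the variable of integration from $t$ to $\tau=x^0$ via $\dot x^0=\rho\zeta_0$ gives $\int_0^\infty\rho(\gamma(t))\,dt=\int_{x^0(0)}^{0}\zeta_0^{-1}\,d\tau\le|x^0(0)|/c_0<\infty$ (alternatively, invoke the exponential decay $x(t)\le x(0)e^{-\kappa_2 t}$ from \cref{lem:t-tau-basic-asym} together with $\rho\le Cx$ on $\oU$, which follows from \cref{eq:rho-vs-x} and compactness of $\partial X$). Fourth, for $0\le s\le t$ one has $d_h(\gamma(s),\gamma(t))\le\int_s^t|\dot\gamma|_h\,d\sigma\to 0$ as $s,t\to\infty$, so $(\gamma(t))$ is $d_h$-Cauchy and converges to some $\gamma_\infty\in\oX$; since $x$ is continuous on $\oU$, $\gamma(t)\in U$ for all $t$, and $x(\gamma(t))\to 0$, we get $x(\gamma_\infty)=0$, i.e.\ $\gamma_\infty\in\partial X$.

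I do not expect a serious obstacle: the proposition falls out quickly from \cref{prop:elem-obs} and \cref{lem:t-tau-basic-asym}, the only point worth a remark being the standard fact that finite $h$-length forces $d_h$-Cauchyness, which relies on completeness of the compact Riemannian manifold-with-boundary $(\oX,h)$. One could instead argue entirely within Fermi coordinates, using the energy surface equation \cref{eq:ES-zeta} in the form $h_{\alpha\beta}\dot x^\alpha\dot x^\beta=\rho^2(1-\zeta_0^2)\le\rho^2$ to get $\dot x^\alpha(t)=O(\rho)=O(e^{-\kappa_2 t})$ and hence convergence of the boundary coordinates; this, however, requires the finite cover of $\partial X$ by preferred Fermi charts used in the proof of \cref{prop:elem-obs}, whereas the $h$-length argument is coordinate-free.
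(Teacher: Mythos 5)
Your proof is correct, and it takes a genuinely (if mildly) different route from the paper's. The paper projects $\gamma$ to the $\partial X$-factor of $\oU\cong[0,\delta]\times\partial X$ and uses the energy surface identity \cref{eq:ES-zeta}, together with the uniform quasi-isometry of $h_x$ with $h_0$, to get $|\dot{\gamma}^{\partial X}|_{h_0}=O(\rho)=O(e^{-\kappa_2 t})$ via \cref{lem:t-tau-basic-asym}; the projected curve then has integrable speed and converges in the complete compact manifold $(\partial X,h_0)$, and combined with $x^0\to 0$ this gives $\gamma(t)\to\gamma_\infty\in\partial X$. You instead bound the full $h$-speed: for a unit-speed $g$-geodesic (the standing normalization, since the paper restricts to the energy surface) one has $|\dot\gamma|_h=\rho|\dot\gamma|_g=\rho$, and the total $h$-length is finite, most cleanly by the change of variables $d\tau=\rho\zeta_0\,dt$ together with the lower bound $\zeta_0\ge\min\{\zeta_0(0),1/2\}$, which the paper itself records inside the proof of \cref{prop:elem-obs} (and again, in the form ``$\zeta_0$ is bounded below by a positive constant,'' in the proof of \cref{lem:t-tau-basic-asym}); finite $h$-length gives $d_h$-Cauchyness in the compact, hence complete, space $(\oX,d_h)$, and $x(\gamma(t))\to 0$ places the limit on $\partial X$. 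What your argument buys: it dispenses with the energy identity, the tangential/normal splitting, and the $h_x$-versus-$h_0$ comparison, and in its primary form it does not even need the exponential decay of \cref{lem:t-tau-basic-asym}, since $\int_0^\infty\rho\,dt\le|x^0(0)|/c_0$ follows directly from the reparametrization; it is also coordinate-free, the same virtue the paper emphasizes about its own projection argument. What the paper's argument buys: the explicit decay $|\dot{\gamma}^{\partial X}|_{h_0}=O(\rho)$ (indeed $o(\rho)$), which is precisely what gets reused in \cref{rem:single-Fermi-chart} and later, whereas your length bound alone does not record the rate of decay of the tangential velocity.
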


\begin{proof}
Suppose $\gamma$ is such a geodesic, with $\dot{x}^0(0)>0$. Since $\gamma$ remains in $U\cong (0,\delta)\times \partial X$, $\gamma$ projects on the second factor to a well-defined smooth curve $\gamma^{\partial X}:[0,\infty)\to \partial X$. In a Fermi coordinate chart where $\dot{\gamma}$ has components $(\dot{x}^0, \dot{x}^{\alpha})$, $\dot{\gamma}^{\partial X}$ has components $\dot{x}^{\alpha}$. The energy surface equation \cref{eq:ES-zeta} combined with $\dot{x}^\alpha = \rho^2 h^{\alpha \beta}\xi_\beta$ gives us the coordinate-independent identity, 
\begin{equation}
    \zeta_0^2 + \rho^{-2}|\dot{\gamma}^{\partial X}|_{h_x}^2 =1,
\end{equation}
where $h_x$ is the metric induced by $h$ on the hypersurface $\{x\}\times \partial X$ in  $\oU\cong[0,\delta]\times\partial X$, viewed as a metric on $\partial X$.
Since $\zeta_0\to 1$, it follows that $\rho(\gamma(t))^{-2}|\dot{\gamma}^{\partial X}(t)|_{h_{x(t)}}^2 \to 0$ as $t\to \infty$. Since $h$ is a smooth metric on $\oX$, $h_{x(t)}$ and $h_{0}$ are quasi-isometric, uniformly in $t$. It follows that $\rho(\gamma(t))^{-2}|\dot{\gamma}^{\partial X}(t)|_{h_{0}}^2 \to 0$ as $t\to \infty$. In particular, this quantity is bounded and thus $|\dot{\gamma}^{\partial X}(t)|_{h_{0}}=O(\rho)$, equivalently, $|\dot{\gamma}^{\partial X}(t)|_{h_{0}}=O(x)$ (later it will be important that $|\dot{\gamma}^{\partial X}(t)|_{h_{0}}=o(\rho)$ but we do not need this here). Now, by \cref{lem:t-tau-basic-asym}, there is a constant $\kappa_2>0$ such that $x(t)\leq x(0)e^{-\kappa_2 t}$. Hence $|\dot{\gamma}^{\partial X}(t)|_{h_{0}}=O(e^{-\kappa_2 t})$. Since $(\partial X, h_0)$ is a compact Riemannian manifold (hence complete) it follows that $\gamma^{\partial X}(t)$ must tend to some fixed point $\gamma_{\infty}\in \partial X$ as $t\to \infty$. Since $\lim_{t\to\infty}x^{0}(t)=0$ it then follows that $\gamma(t)$ must also tend to $\gamma_{\infty}\in \partial X$.
\end{proof}
    
In the above argument, we made a point of not relying on any particular local coordinate system. If we had known that $\gamma$ remained in a single Fermi coordinate chart, then we could have argued in essentially the same way to show that $\dot{x}^{\alpha}(t)$ was $o(e^{-Ct})$ and hence integrable, so that $\lim_{t\to\infty} x^{\alpha}(t)$ must exist. Of course, now that we know $\gamma$ tends to a specific boundary point $\gamma_{\infty}$, we know that $\gamma(t)$ eventually remains in some Fermi coordinate chart. Since the questions we are interested in may all be studied locally near the boundary endpoint $\gamma_{\infty}$ of a given geodesic $\gamma:[0,\infty)\to X$ that leaves all compact sets, in the following we will work in a fixed Fermi coordinate chart. 
\begin{remark}\label{rem:single-Fermi-chart}
Note that from the proof 
above we know that, in such a chart, $\rho^{-1}\dot{x}^{\alpha}\to 0$ (equivalently, $\rho \xi_{\alpha}\to 0$) as $t\to \infty$. This shows us that, although it is natural to replace the variable $\xi_0$ by $\zeta_0=\rho\xi_0$, if we wish to be able to ``shoot geodesics in'' from $\partial X$ then we should not replace the variables $\xi_{\alpha}$ by $\zeta_{\alpha}=\rho\xi_{\alpha}$ (the $\zeta_{\alpha}$ go to zero along every geodesic, so their limiting values at $t=\infty$ clearly fail to distinguish between the different geodesics that tend to a given point $q\in \partial X$; hence the limiting values of these variables cannot be chosen as initial data for a ``flow'' that goes in the opposite direction). In hyperbolic space $(\mathbb{B}^{n+1},g)$ it is easy to see that the family of geodesics tending towards a given boundary point is parametrized by the limiting values of the variables $\xi_{\alpha}$ (not of the $\zeta_{\alpha}$). Hence, in that case, the $\xi_{\alpha}$ are already ``good'' variables. This continues to be the case for asymptotically hyperbolic manifolds, but we will see below that we need to make some adjustments to handle general conformally compact manifolds. 
\end{remark}

\subsection{Extending to the boundary}\label{sec:extending}
In light of \cref{prop:elem-obs}, for geodesics $\gamma$ in $U\subset X$ with $\dot{x}^0(0)>0$ it is natural to replace the parameter time variable $t$ with the boundary-defining function $\tau = x^0$ and attempt to extend the (reparametrized) cogeodesic flow up to $\partial X$. Before doing this we first replace the momentum variables $(\zeta_0,\xi_1,\ldots, \xi_n)$ with suitable (equivalent) ``velocity variables.'' This gives a more direct relation between our variables and $\dot{\gamma}$ (and also facilitates comparison with \cite{Mazzeo-Thesis}). We set $v^{i}=h^{ij}\xi_j = \rho^{-2}\dot{x}^i$, so that 
\begin{equation}
    v^0=\xi_0 = \rho^{-2}\dot{x}^0 \quad \text{and} \quad v^{\alpha}=h^{\alpha\beta}\xi_{\beta}= \rho^{-2}\dot{x}^{\alpha}. 
\end{equation}
We have already seen that $w^0 = \zeta_0 = \rho\xi_0 = \rho^{-1}\dot{x}^0$ is a better variable than $v^0=\xi_0$.  After we have examined the equations in these new variables, we will also give replacements $w^{\alpha}$ for the $v^{\alpha}$ (which are needed in the case when asymptotic hyperbolicity fails). Since we are interested in obtaining a system that is regular up to $\partial X$, it will be useful to note that $\rho_{\alpha}=\partial_{\alpha}\rho$ can be written as $\rho k_{\alpha}$ where $k_{\alpha}$ is smooth up to $\partial X$ (and equal to $\kappa^{-1}\kappa_{\alpha}$ on $\partial X$, cf.\ \cref{eq:rho-al}). Writing $k^{\alpha} = h^{\al\be}k_{\be}$, we therefore also have $\rho^{\al} = \rho k^{\al}$.

With $(\zeta_0,\xi_1,\ldots, \xi_n)$ replaced by $(w^0,v^1,\ldots, v^n)$ the cogeodesic flow equations become
\begin{equation}\label{eqn:cogeo-vel-var}
    \begin{aligned}
    \dot{x}^0 &= \rho w^0,\\
    \dot{x}^\alpha &= \rho^2 v^{\alpha },\\
    \dot{w}^0 &=
    -\rho_0(1-(w^0)^2)+\rho^2 k_{\beta}v^{\beta}w^0 - \frac{1}{2} \rho^3  h_{\mu\beta}h_{\nu\gamma} \partial_0h^{\mu \nu}v^{\beta}v^{\gamma}, \\ 
    \dot{v}^\alpha&=  - k^\alpha + \rho h_{\beta\gamma}\partial_0h^{\alpha\beta}w^0v^{\gamma}   + \rho^2h_{\beta\lambda}\partial_{\gamma}h^{\alpha\beta}v^{\lambda}v^{\gamma} - \frac{1}{2} \rho^2 h_{\mu\beta}h_{\nu\gamma}\partial^{\alpha}h^{\mu\nu}v^{\beta}v^{\gamma},
\end{aligned}
\end{equation}
where the dot still denotes a $t$-derivative and $\partial^{\alpha} = g^{\alpha\lambda}\partial_{\lambda}$ (to obtain the last equation we have used that $\dot{v}^{\alpha} = h^{\alpha\beta}\dot{\xi}_{\beta} + \dot{h}^{\alpha\beta}\xi_{\beta}$, where $\dot{h}^{\alpha\beta} = \partial_0h^{\alpha\beta}\cdot\dot{x}^0 +\partial_{\gamma}h^{\alpha\beta}\cdot\dot{x}^{\gamma}$). 
The energy surface equation then becomes 
\begin{equation}\label{eq:ES-vel-var}
    (w^0)^2 + \rho^2h_{\alpha\beta}v^{\alpha}v^{\beta} = 1.
\end{equation}
In particular, since we consider only integral curves lying on the energy surface, we may rewrite the equation for $\dot{w}^0$ as
\begin{equation}\label{eqn:cogeo-vel-var-wd}
    \dot{w}^0 = 
    -\rho^2\rho_0h_{\alpha\beta}v^{\alpha}v^{\beta} + \rho^2 k_{\beta}v^{\beta}w^0 - \frac{1}{2} \rho^3  h_{\mu\beta}h_{\nu\gamma} \partial_0h^{\mu \nu}v^{\beta}v^{\gamma}.
\end{equation}

Before we make our change of dependent variable, we make one simple observation concerning solutions of \cref{eqn:cogeo-vel-var}:
\begin{lemma} \label{lem:v-al-log-rho-bound}
Let $\gamma:[0,\infty)\to X$ be a geodesic for $g$ that remains in a given Fermi coordinate system for all time. Then, for the corresponding solution of \cref{eqn:cogeo-vel-var} we have $v^{\al} = O(t)$ for $t$ large, and hence $v^{\al} = O(\log \rho)$ for $\rho$ near $0$. 
\end{lemma}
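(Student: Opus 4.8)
The plan is to bound the right-hand side of the $\dot v^{\alpha}$-equation in \cref{eqn:cogeo-vel-var} uniformly along $\gamma$, conclude that $v^{\alpha}$ can grow at most linearly in the parameter $t$, and then convert this into the stated bound in $\rho$ using the comparison between $t$ and $\rho$ from \cref{lem:t-tau-basic-asym}.

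First I would record the a priori bounds. Since $\gamma$ stays in a fixed Fermi chart, its trajectory lies in a set whose closure in $\oX$ is compact, so $\rho$ and all the metric quantities $h_{ij}$, $h^{ij}$, $\partial_k h^{ij}$, and the functions $k_{\alpha}$, $k^{\alpha}$ (which are smooth up to $\partial X$ by construction, $\rho_{\alpha}=\rho k_{\alpha}$) are bounded there, while $h_{\alpha\beta}$ is uniformly positive definite. The energy-surface equation \cref{eq:ES-vel-var} then gives $|w^{0}|\le 1$ and, by uniform positive-definiteness of $h_{\alpha\beta}$, $\rho\,|v^{\alpha}|=O(1)$ along $\gamma$. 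Substituting these into the $\dot v^{\alpha}$-equation, one checks term by term that the right-hand side stays bounded: the term $-k^{\alpha}$ is bounded --- this is the crucial point, namely that in the tangential directions the factor $\rho^{-1}\rho_{\alpha}$ appearing in \cref{eq:cog-flow-ES} is actually regular at $\partial X$, unlike its normal counterpart $\rho^{-1}\rho_{0}$; the term $\rho\, h_{\beta\gamma}\partial_{0}h^{\alpha\beta}w^{0}v^{\gamma}$ is a bounded factor times $w^{0}$ times $\rho v^{\gamma}=O(1)$; and the two remaining terms are bounded factors times products $(\rho v^{\beta})(\rho v^{\gamma})=O(1)$ (the one involving $\partial^{\alpha}=g^{\alpha\lambda}\partial_{\lambda}=\rho^{2}h^{\alpha\lambda}\partial_{\lambda}$ carrying an extra $\rho^{2}$, hence in fact $O(\rho^{2})$). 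Thus $|\dot v^{\alpha}(t)|\le C$ for a constant $C$ depending only on the chart, and integrating from $0$ to $t$ yields $|v^{\alpha}(t)|\le |v^{\alpha}(0)|+Ct=O(t)$.

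For the second assertion, if $\gamma$ remains in a compact subset of $X$ then $\rho$ is bounded below along $\gamma$ and there is nothing to prove; otherwise $\rho(\gamma(t))\to 0$, so --- after discarding an initial segment so that $\dot x^{0}>0$ at the new starting point, which is possible precisely because $x^{0}$ must then eventually be increasing (cf.\ \cref{prop:elem-obs}) --- \cref{lem:t-tau-basic-asym} gives $t=O(|\log\rho|)$ as $\rho\to 0$, and combining this with the previous step produces $v^{\alpha}=O(\log\rho)$. I do not expect any real obstacle beyond the bookkeeping in the term-by-term estimate; the one conceptual point worth emphasizing is that the only genuine (logarithmic) growth of $v^{\alpha}$ comes from integrating a \emph{bounded} right-hand side over a parameter interval of length $\sim|\log\rho|$, with no contribution from a boundary singularity in the equation itself.
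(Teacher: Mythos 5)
Your proposal is correct and follows essentially the same route as the paper: the energy identity \cref{eq:ES-vel-var} gives $w^0$ bounded and $\rho\, v^{\alpha}=O(1)$, so the right-hand side of the $\dot v^{\alpha}$-equation in \cref{eqn:cogeo-vel-var} is uniformly bounded, whence $v^{\alpha}=O(t)$, and \cref{lem:t-tau-basic-asym} converts this to $O(\log\rho)$. The paper simply asserts the boundedness of $\dot v^{\alpha}$ without your term-by-term bookkeeping, but the substance of the argument is identical.
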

\begin{proof}
Note that, by the energy identity, $v^{\al} = O(\rho^{-1})$ and $w^0$ is bounded. (In fact the energy estimate tells us that $v^{\al} = o(\rho^{-1})$, since $w^0\to 1$ as $t\to\infty$, but we do not need this for the argument and we get a stronger result as our conclusion.) It follows from the equation for $\dot{v}^{\al}$ in \cref{eqn:cogeo-vel-var} that $\dot{v}^{\al}$ is uniformly bounded in $t$ and hence that $v^{\al} = O(t)$ for $t$ large. By \cref{lem:t-tau-basic-asym}, this implies $v^{\al} = O(\log|\tau|)$ for $\tau$ near $0$, equivalently, $v^{\al} = O(\log \rho)$ for $\rho$ near $0$. This proves the lemma.
\end{proof}
\begin{remark}
    It follows from the discussion below that in the asymptotically hyperbolic case $v^{\al} = O(1)$, but that in the general conformally compact case the bound $v^{\al} = O(\log \rho)$ cannot be improved. Eventually we will replace $v^{\alpha}$ by a variable $w^{\al}$ that is always $O(1)$ and tends to a finite limit.
\end{remark}

Restricting our attention to geodesics near the conformal infinity with $\dot{x}^0(0)>0$, we now replace the parameter time $t$ with the new independent variable $\tau=x^0 \in [-\delta,0)$. From \cref{eqn:cogeo-vel-var}  we obtain the system
\begin{equation}\label{eqn:cogeo-tau}
    \begin{aligned}
    \frac{dx^{\alpha}}{d\tau} &= \rho \frac{v^{\alpha}}{w^0},\\
    \frac{dw^0}{d\tau} &=
     \rho k_{\beta}v^{\beta} - \rho\frac{\rho_0h_{\alpha\beta}v^{\alpha}v^{\beta}}{w^0} - \frac{1}{2} \rho^2 \frac{ h_{\mu\beta}h_{\nu\gamma} \partial_0h^{\mu \nu}v^{\beta}v^{\gamma}}{w^0}, \\ 
    \frac{dv^{\alpha}}{d\tau}&=  - \rho^{-1} \frac{k^\alpha}{w^0} +  h_{\beta\gamma}\partial_0h^{\alpha\beta}v^{\gamma}   + \rho \frac{h_{\beta\lambda}\partial_{\gamma}h^{\alpha\beta}v^{\lambda}v^{\gamma}}{w^0} - \frac{1}{2} \rho \frac{h_{\mu\beta}h_{\nu\gamma}\partial^{\alpha}h^{\mu\nu}v^{\beta}v^{\gamma}}{w^0},
\end{aligned}
\end{equation}
where we have used \cref{eqn:cogeo-vel-var-wd} to obtain the expression for $\frac{dw^0}{d\tau}$. Recalling that $\dot{x}^0>0$ is equivalent to $w^0>0$ and that $w^0\to 1$ as $t\to\infty$ (equivalently, as $\tau\to0^-$), we see that if it weren't for the term $- \rho^{-1} k^{\alpha}/w^0$ in the third equation, the system would be regular up to the boundary $\tau=0$. Moreover, as observed in \cite{Mazzeo-Thesis}, the expression $\rho^{-1} k^{\alpha} = \rho^{-2}\rho^{\al}$ leads to a singularity at $\tau=0$ if and only if $\kappa$ fails to be locally constant on $\partial X$ (since $\rho^{-2}\rho^{\al}= x^{-1}\kappa^{-2}\kappa^{\al} + O(1)$, where $\kappa^{\al}(x') = h^{\alpha\beta}(0,x')\kappa_{\beta}(x')$ and the $O(1)$ term is smooth up to $\partial X$).
In particular, if $(X,g)$ is asymptotically hyperbolic, then $\rho^{-1} k^{\alpha}$ is smooth up to the boundary and the system \cref{eqn:cogeo-tau} is regular for $\tau \in [-\delta,0]$. When $\kappa$ is not locally constant on $\partial X$, however, the term $\rho^{-1} k^{\alpha}$ is only $O(\rho^{-1})$.

We will handle the singularity in the general case by making a singular change of variables to obtain a more regular system. Since $- \rho^{-1} k^{\alpha}/w^0$ blows up like $\rho^{-1}$ as one approaches a boundary point with $\kappa_{\beta}\neq 0$ (along a geodesic), one is tempted to eliminate this term from the last equation of \cref{eqn:cogeo-tau} by replacing $v^{\alpha}$ by $v^{\alpha}$ plus a logarithmically divergent term in $\tau=x^0$ (or $\rho$). The problem with this, however, is that the linear term $h_{\beta\gamma}\partial_0h^{\alpha\beta}v^{\gamma}$ would then pick up a logarithmic singularity, and so the equation would still fail to be regular at $\tau=0$ (the other terms in \cref{eqn:cogeo-tau} that involve $v^{\al}$ would also pick up singularities, but since these terms each come with a power of $\rho$ the resulting coefficients would at least still be continuous up to $\partial X$). We resolve this problem by first dealing with the linear term $h_{\beta\gamma}\partial_0h^{\alpha\beta}v^{\gamma}$ using an integrating factor (which we must take to be matrix valued). To this end, we rewrite the last equation of \cref{eqn:cogeo-tau} more suggestively as
\begin{equation}
    \frac{dv^{\alpha}}{d\tau} - h_{\beta\gamma}\partial_0h^{\alpha\beta}v^{\gamma} =  - \rho^{-1} \frac{k^\alpha}{w^0}   - \frac{1}{2} \rho \frac{h_{\mu\beta}h_{\nu\gamma}\partial^{\alpha}h^{\mu\nu}v^{\beta}v^{\gamma}}{w^0}  + \rho \frac{h_{\beta\lambda}\partial_{\gamma}h^{\alpha\beta}v^{\lambda}v^{\gamma}}{w^0}.
\end{equation}
In the domain of our Fermi coordinate chart we therefore define the matrix-valued function $\mu=(\mu_{\gamma}^{\alpha})$ by
\begin{equation}
    \mu^{\alpha}_{\gamma}(x^0,x^1,\ldots,x^n) = -\int_{0}^{x^0} \left. h_{\beta\gamma}\partial_0h^{\alpha\beta}\right|_{(\tau,x^1,\ldots,x^n)} \,d\tau,
\end{equation}
so that $\partial_0\mu_{\gamma}^{\al} = - h_{\beta\gamma}\partial_0h^{\alpha\beta}$. Applying the matrix exponential we then define $M = (M_{\beta}^{\alpha})$ by $M = e^{\mu}$ and its inverse $L=(L_{\beta}^{\alpha})$ by $L=e^{-\mu}$. Note that, in the Fermi coordinate chart where they are defined,  $\partial_0 M_{\gamma}^{\alpha} =  M_{\lambda}^{\alpha}\partial_0\mu_{\gamma}^{\lambda}$ and  $M_{\lambda}^{\alpha}L_{\beta}^{\lambda} = \delta_{\beta}^{\alpha}$. Note also that $\mu$, $M$ and $L$ are $\cC^{\infty}$ up to $\partial X$ and $M$ is the identity on $\partial X$.  With $M$ thus defined, we introduce new variables $\hat{v}^{\alpha}$ given by
\begin{equation}
    \hat{v}^{\alpha} = M_{\lambda}^{\alpha}v^{\lambda}.
\end{equation}
Since, along the solutions of \cref{eqn:cogeo-tau}, we have
\begin{equation}
\begin{aligned}
    \frac{d}{d\tau}M_{\gamma}^{\al} &= \partial_0M_{\gamma}^{\al} + \frac{dx^{\lambda}}{d\tau}\partial_{\lambda}M_{\gamma}^{\al} \\
    &= -M^{\al}_{\lambda}  h_{\beta\gamma}\partial_0h^{\lambda\beta} + \rho\frac{v^{\lambda}}{w^0} \partial_{\lambda}M_{\gamma}^{\al},
\end{aligned}
\end{equation}
the system \cref{eqn:cogeo-tau} then becomes
\begin{equation}\label{eqn:cogeo-tau-vhat}
    \begin{aligned}
    \frac{dx^{\alpha}}{d\tau} &= \rho \frac{L^{\alpha}_{\beta}\hat{v}^{\beta}}{w^0},\\
    \frac{dw^0}{d\tau} &= \rho k_{\alpha}L^{\alpha}_{\beta}\hat{v}^{\beta}
     - \rho\frac{\rho_0h_{\mu\nu}L^{\mu}_{\alpha}L^{\nu}_{\beta}\hat{v}^{\alpha}\hat{v}^{\beta}}{w^0}- \frac{1}{2} \rho^2 \frac{ h_{\mu\alpha}h_{\nu\delta} \partial_0h^{\mu \nu}L_{\beta}^{\alpha}L_{\gamma}^{\delta}\hat{v}^{\beta}\hat{v}^{\gamma}}{w^0}, \\ 
    \frac{d\hat{v}^{\alpha}}{d\tau}&=  - \rho^{-1} \frac{M^{\alpha}_{\lambda}k^{\lambda}}{w^0}  + \rho \frac{M^{\alpha}_{\lambda}h_{\mu\delta}\partial_{\epsilon}h^{\lambda\mu}L_{\beta}^{\delta}L_{\gamma}^{\epsilon}\hat{v}^{\beta}\hat{v}^{\gamma}}{w^0}  - \frac{1}{2} \rho \frac{M^{\alpha}_{\lambda}h_{\mu\delta}h_{\nu\epsilon}\partial^{\lambda}h^{\mu\nu}L_{\beta}^{\delta}L_{\gamma}^{\epsilon}\hat{v}^{\beta}\hat{v}^{\gamma}}{w^0}\\
    & \phantom{=}\qquad \hspace{250pt}+ \rho \frac{\partial_{\epsilon}M^{\alpha}_{\delta}L_{\beta}^{\delta}L_{\gamma}^{\epsilon}\hat{v}^{\beta}\hat{v}^{\gamma}}{w^0},
\end{aligned}
\end{equation}
for $\tau \in [-\delta,0)$. 

\begin{remark}
Note that $\partial_0 h^{\al\be}$ does not vanish on $\partial X$ in general. It is easy to see that $\partial_0 h^{\al\be} = O(\rho)$ if and only if the second fundamental form of $\partial X$ with respect to $h$ vanishes. While it is possible to conformally rescale $h$ (and correspondingly rescale $\rho$, so as to preserve that $\rho^{-2}h=g$) so that the mean curvature of $\partial X$ vanishes, the trace free part of the second fundamental form is a (weighted) conformal invariant and does not vanish in general. Indeed, the trace free part of the second fundamental form is determined by $g$ (and can be interpreted as measuring the failure of $g$ to be asymptotically Einstein to a certain order, see, e.g. \cite{CurryGover2018}). These observations show that the change of variables made in the preceding paragraph was necessary, in the sense that otherwise our next change of variables would not lead to a system that extends up to $\partial X$ in the general conformally compact case.
\end{remark}
We are now in a position to eliminate the singular term $- \rho^{-1} M^{\alpha}_{\lambda}k^{\lambda}/w^0$ from the system. We first note that
\begin{equation}
   -\rho^{-1} M^{\alpha}_{\lambda}k^{\lambda} =-\rho^{-2} M^{\alpha}_{\lambda}\rho^{\lambda}= \frac{\kappa^{\alpha}}{\kappa^2} \cdot \frac{1}{x^0} + O(1),
\end{equation}
where $\kappa^{\alpha}(x') = h^{\alpha\beta}(0,x')\kappa_{\beta}(x')$ and the $O(1)$ term, which we will denote by $E^{\alpha}(x^0,x')$, is smooth up to $\partial X$ in the given Fermi coordinate system. Thus, defining smooth functions $A^{\alpha}$ of the Fermi coordinates (for $x^0<0$) and of $w^0>0$ by 
\begin{equation}\label{eq:A}
    A^{\alpha}(x^0, x^1, \dots, x^n,w^0) = \frac{1}{w^0} \cdot \frac{\kappa^{\alpha}}{\kappa^2} \cdot\log |x^0|,
\end{equation}
we have
\begin{equation}
    - \rho^{-1} \frac{M^{\alpha}_{\lambda}k^{\lambda}}{w^0} = \frac{\partial A^{\alpha}}{\partial x^0} + \frac{E^{\alpha}}{w^0}.
\end{equation}
Hence, if we define new variables $w^{\alpha}$ by
\begin{equation}
w^{\alpha} = \hat{v}^{\alpha} - A^{\alpha} =  M_{\beta}^{\alpha}v^{\beta} - A^{\alpha},
\end{equation}
then the system \cref{eqn:cogeo-tau-vhat} becomes 
\begin{equation}\label{eqn:cogeo-tau-w}
    \begin{aligned}
    \frac{dx^{\alpha}}{d\tau} &= V^{\al} := \rho \frac{L^{\alpha}_{\beta}(w^{\beta}+A^{\beta})}{w^0},\\
    \frac{dw^0}{d\tau} &= W^0 :=
    \rho k_{\alpha}L^{\alpha}_{\beta}(w^{\beta}+A^{\beta}) - \rho\frac{\rho_0h_{\mu\nu}L^{\mu}_{\alpha}L^{\nu}_{\beta}(w^{\alpha}+A^{\alpha})(w^{\beta}+A^{\beta})}{w^0} \\
    &\qquad\qquad \qquad \qquad\qquad \qquad\quad\;\hspace{0.3pt}- \frac{1}{2} \rho^2 \frac{ h_{\mu\alpha}h_{\nu\delta} \partial_0h^{\mu \nu}L_{\beta}^{\alpha}L_{\gamma}^{\delta}(w^{\beta}+A^{\beta})(w^{\gamma}+A^{\gamma})}{w^0}, \\ 
    \frac{dw^{\alpha}}{d\tau}&= W^{\al}:=  \frac{E^{\alpha}}{w^0} + \rho \frac{M^{\alpha}_{\lambda}h_{\mu\delta}\partial_{\epsilon}h^{\lambda\mu}L_{\beta}^{\delta}L_{\gamma}^{\epsilon}(w^{\beta}+A^{\beta})(w^{\gamma}+A^{\gamma})}{w^0}  \\
    & \phantom{= W^{\al}:= }\hspace{18.1pt}- \frac{1}{2} \rho \frac{M^{\alpha}_{\lambda}h_{\mu\delta}h_{\nu\epsilon}\partial^{\lambda}h^{\mu\nu}L_{\beta}^{\delta}L_{\gamma}^{\epsilon}(w^{\beta}+A^{\beta})(w^{\gamma}+A^{\gamma})}{w^0} \\
     &\phantom{= W^{\al}:= }\hspace{36.2pt}+  \rho \frac{\partial_{\epsilon}M^{\alpha}_{\delta}L_{\beta}^{\delta}L_{\gamma}^{\epsilon}(w^{\beta}+A^{\beta})(w^{\gamma}+A^{\gamma})}{w^0} - V^{\lambda}\partial_{\lambda} A^{\al} - W^0\partial_{w^0}A^{\al},
\end{aligned}
\end{equation}
which now makes sense for $\tau \in [-\delta,0]$ (since $x\log x$, $x (\log x)^2$ and $x (\log x)^3$ extend continuously from $x>0$ to $x=0$).

Letting $y$ stand for the $(2n+1)$-tuple of independent variables $(x^1,\ldots, x^n, w^0,w^1,\ldots w^n)$, the system \cref{eqn:cogeo-tau-w}
is of the form
\begin{equation}\label{eq:ODE}
    \frac{dy}{d\tau} = V(\tau,y),
\end{equation}
where $V$ is $\cC^{\infty}$ in $(\tau,y)$ for $\tau<0$ but only continuous at $\tau =0$. Note, however, that we have good deal more control over the right hand side $V(\tau, y)$ than this would suggest. Indeed, from \cref{eqn:cogeo-tau-w} and \cref{eq:A} it follows that $V(\tau,y)$ is of the form
\begin{equation}\label{eqn:cogeo-tau-w-RHS-log-expansion}
    V(\tau,y) = V_0(\tau,y) + \tau(\log|\tau|) V_1(\tau,y) + \tau(\log|\tau|)^2V_2(\tau,y) + \tau(\log|\tau|)^3V_3(\tau,y) ,
\end{equation}
where the functions $V_0, V_1, V_2, V_3$ are smooth up to $\tau=0$ in $(\tau,y)$. In particular, this means that not only $V$ but also all of its $y$-partial derivatives are  $\cC^{\infty}$ in $(\tau,y)$ for $\tau<0$ and continuous at $\tau =0$. This puts us in a setting very similar to that of Fefferman in his work on the boundary behavior of geodesics for the Bergman metric \cite{Fefferman1974}, and allows us to make use of the regularity results established in \cref{app:reg} to prove our main results.

\subsection{Extending solutions to the boundary}

Before we prove our main results, however, we wish to establish that geodesics for $g$ that approach $\partial X$ give rise to solutions of \cref{eqn:cogeo-tau-w} that extend to $\tau=0$. This is shown in the following lemma.

\begin{lemma}\label{lem:ext-to-tau-eq-0}
Let $\gamma:[0,\infty)\to X$ be a geodesic for $g$ with $\gamma(0)\in U$ and $\dot{x}(0)>0$ and let  $(x^0, x^1, \ldots, x^n)$ be a Fermi coordinate system in a neighborhood of $\gamma_{\infty}$.  Then the smooth solution $(x^{\alpha}, w^0, w^{\alpha})$ of \cref{eqn:cogeo-tau-w} obtained from $\gamma$ for $\tau \in [-\delta_0,0)$ extends to $\tau=0$, giving a $\cC^1$ solution for $\tau\in [-\delta_0,0]$.
\end{lemma}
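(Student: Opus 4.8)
The plan is to show that, along the solution $y(\tau)=(x^\alpha,w^0,w^\alpha)$ of \cref{eqn:cogeo-tau-w} coming from $\gamma$, the right-hand side $V(\tau,y)$ stays bounded near $\tau=0$ and in fact converges as $\tau\to 0^-$. Granting this, the $\cC^1$ extension is elementary: boundedness of $dy/d\tau$ gives, by the Cauchy criterion, a continuous extension of $y$ to $\tau=0$, and then the mean value theorem upgrades this to differentiability at the endpoint, with $y'$ continuous there and $y'(0)$ equal to the limiting value of $dy/d\tau$.

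I would first record the a priori bounds available on $[-\delta_0,0)$. By \cref{prop:elem-obs} and the energy relation \cref{eq:ES-vel-var} we have $0<c\le w^0\le 1$ and $w^0\to 1$ as $\tau\to 0^-$. By \cref{lem:v-al-log-rho-bound}, $v^\alpha=O(\log|\tau|)$, and since $M=e^{\mu}$ is smooth up to $\partial X$ this yields $w^\alpha+A^\alpha=M^\alpha_\lambda v^\lambda=O(\log|\tau|)$. Since $\rho$ is comparable to $|\tau|=x$ near $\partial X$ by \cref{eq:rho-vs-x}, the $x^\alpha$-equation gives $dx^\alpha/d\tau=V^\alpha=O(|\tau|\log|\tau|)$, which is integrable on $[-\delta_0,0)$, so $x^\alpha(\tau)$ converges to some $x^\alpha_\infty$ as $\tau\to 0^-$ (consistent with \cref{prop:def-bdry-pt}).

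Next I would feed these bounds into the $w^0$- and $w^\alpha$-equations. Every term of $W^0$ carries a factor $\rho=O(|\tau|)$ times a product of at most two $(w+A)$-factors, so $W^0=O(|\tau|(\log|\tau|)^2)\to 0$ and $w^0(\tau)$ converges. For $W^\alpha$, the whole point of the construction in \cref{sec:extending} is that the only potentially unbounded contribution, $-\rho^{-1}M^\alpha_\lambda k^\lambda/w^0$, has been cancelled; what remains is $E^\alpha/w^0$, plus terms with a factor $\rho$ times $(w+A)$-factors, plus the chain-rule terms $-V^\lambda\partial_\lambda A^\alpha-W^0\partial_{w^0}A^\alpha$. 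Since $\partial_\lambda A^\alpha$ and $\partial_{w^0}A^\alpha$ are $O(\log|\tau|)$ while $V^\lambda$ and $W^0$ are $O(|\tau|(\log|\tau|)^2)$, all of these error terms are $O(|\tau|(\log|\tau|)^3)$ and tend to $0$; and since $E^\alpha$ is smooth up to $\partial X$ with $x^\alpha(\tau)\to x^\alpha_\infty$ and $w^0\to 1$, the leading term $E^\alpha/w^0$ tends to $E^\alpha(0,x^\alpha_\infty)$. Hence $W^\alpha$ is bounded near $\tau=0$ and has a limit there; in particular $dw^\alpha/d\tau$ is bounded, so $w^\alpha$ is Lipschitz near $\tau=0$ and extends continuously to $\tau=0$.

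Combining the three components, $y$ extends continuously to $[-\delta_0,0]$. Finally, $dy/d\tau=V(\tau,y(\tau))$ converges as $\tau\to 0^-$: in the expansion \cref{eqn:cogeo-tau-w-RHS-log-expansion} the terms $\tau(\log|\tau|)^k V_k$ vanish in the limit, while $V_0(\tau,y(\tau))\to V_0(0,y(0))$ by continuity of $V_0$ and of $y$. By the mean value theorem $y$ is then differentiable at $\tau=0$ with $y'(0)$ equal to this limit, and $y'$ is continuous at $0$, so $y\in\cC^1([-\delta_0,0])$. I expect the crux to be the estimate on $W^\alpha$: one must verify that, after the two singular changes of variables, the nonlinear $(w+A)^2$-terms and the $A^\alpha$-derivative terms are genuinely integrable at $\tau=0$, which rests essentially on the a priori bound $v^\alpha=O(\log|\tau|)$ of \cref{lem:v-al-log-rho-bound} to tame the otherwise only $O(\rho^{-1})$-sized quadratic expressions.
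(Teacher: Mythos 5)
Your proposal is correct and follows essentially the same route as the paper: the heart of both arguments is the a priori bound $v^{\al}=O(\log\rho)$ from \cref{lem:v-al-log-rho-bound}, fed into the rewritten system \cref{eqn:cogeo-tau-w-with-v} to show $V^{\al}=O(\rho\log\rho)$, $W^0=O(\rho(\log\rho)^2)$, and hence that $W^{\al}$ stays bounded (indeed converges), so $w^{\al}$ extends to $\tau=0$. The only difference is cosmetic: the paper concludes the $\cC^1$ extension by invoking the Picard--Lindel\"of theory for the system (using that $V(\tau,y)$ is Lipschitz in $y$ uniformly up to $\tau=0$), whereas you conclude it directly from the convergence of the right-hand side via the log-expansion \cref{eqn:cogeo-tau-w-RHS-log-expansion} and the mean value theorem, which is an equally valid and self-contained finish.
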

\begin{proof}
As observed previously, the system \cref{eqn:cogeo-tau-w} is of the form \cref{eq:ODE} with $V(\tau,y)$ smooth in $y$ for each fixed $\tau\in [-\delta,0]$. Moreover, the $y$-partial derivatives of $V(\tau,y)$ vary continuously with $\tau\in [-\delta,0]$, and hence $V(\tau,y)$ is locally Lipschitz in $y$ uniformly in $\tau\in [-\delta,0]$.  It therefore follows from the standard Picard–Lindel\"of existence theorem that if $y(\tau)$ solves \cref{eqn:cogeo-tau-w} for $\tau\in [-\delta_0,0)$ and remains in a bounded set (with $w^0(\tau)$ bounded away from $0$ and the $x^{\al}(\tau)$ bounded away from the boundary of the region where the coordinate system is defined) for all $\tau\in [-\delta,0)$ then the solution $y(\tau)$ extends to $\tau\in [-\delta_0,0]$ and is $\cC^1$ in $\tau$. Since the functions $x^{\alpha}$ tend to the coordinates of $\gamma_{\infty}$ as $\tau\to 0^-$ and  $w^0=\zeta_0 \to 1$ as $\tau\to 0^-$, it remains only to see that the functions $w^{\alpha}$ are also bounded for $\tau\in [-\delta_0,0)$. To see this it is helpful to write the system \cref{eqn:cogeo-tau-w} back in terms of the variables $v^{\al} = L^{\al}_{\be}(w^{\al}+A^{\be})$. We obtain (cf.\ \cref{eqn:cogeo-tau}):
\begin{equation}\label{eqn:cogeo-tau-w-with-v}
    \begin{aligned}
    \frac{dx^{\alpha}}{d\tau} &=V^{\al}:= \rho \frac{v^{\alpha}}{w^0},\\
    \frac{dw^0}{d\tau} &= W^0:=
     \rho k_{\beta}v^{\beta} - \rho\frac{\rho_0h_{\alpha\beta}v^{\alpha}v^{\beta}}{w^0} - \frac{1}{2} \rho^2 \frac{ h_{\mu\beta}h_{\nu\gamma} \partial_0h^{\mu \nu}v^{\beta}v^{\gamma}}{w^0}, \\ 
    \frac{dw^{\alpha}}{d\tau}& = W^{\al}:= \frac{E^\alpha}{w^0}  + \rho \frac{h_{\beta\lambda}\partial_{\gamma}h^{\alpha\beta}v^{\lambda}v^{\gamma}}{w^0} - \frac{1}{2} \rho \frac{h_{\mu\beta}h_{\nu\gamma}\partial^{\alpha}h^{\mu\nu}v^{\beta}v^{\gamma}}{w^0}+  \rho \frac{\partial_{\gamma}M^{\alpha}_{\beta}v^{\beta}v^{\gamma}}{w^0}\\
     &\phantom{= W^{\al}:= }\hspace{210pt} - V^{\lambda}\partial_{\lambda} A^{\al} - W^0\partial_{w^0}A^{\al}.
\end{aligned}
\end{equation}
Recalling from \cref{lem:v-al-log-rho-bound} that $v^{\al}=O(\log \rho)$, it follows that $V^{\al} = O(\rho \log \rho)$ and $W^0=O(\rho(\log\rho)^2)$. Hence, all terms in the expression for $W^{\al}$ remain bounded (indeed, the first term, $E^{\al}/w^0$, is smooth up to $\partial X$ and therefore bounded; noting that $V^{\lambda}\partial_{\lambda} A^{\al} =O(\rho(\log\rho)^2)$ and $W^0\partial_{w^0}A^{\al} = O(\rho(\log\rho)^3)$, it is easy to see that all other terms go to zero as $\tau\to 0^-$). It follows easily that $\lim_{\tau \to 0^-} w^{\al}(\tau)$ exists, and, in particular, that the functions $w^{\al}$ are bounded for $\tau\in [-\delta_0,0)$. Therefore, by the classical existence theory for ordinary differential equations, the solution $(x^{\alpha}, w^0, w^{\alpha})$ extends to $\tau =0$ as a $\cC^1$ solution of \cref{eqn:cogeo-tau-w}. This proves the lemma.
\end{proof}

\section{Proofs of the main results}\label{sec:proofs-of-main-thms}

\subsection{Proof of Theorem \ref{thm:asymptotic-behavior-of-geodesic}}

Here we prove \cref{thm:asymptotic-behavior-of-geodesic} from the introduction, which shows, in particular, that geodesics leaving all compact sets tend to a definite point in the conformal infinity and that this point varies smoothly as one varies the initial direction of the geodesic.  

Let $(X, g)$ be a conformally compact Riemannian manifold with $g= \rho^{-2}h$. Let $\gamma: [0, \infty) \to X$ be a geodesic for $g$ that leaves all compact sets. 
To prove (i) we note that, since $\gamma$ leaves all compact sets, $\gamma$ must eventually enter $U$. Clearly we must also have $\lim_{t\to\infty}x^0(t)=0$, where $x^0(t)=x^0(\gamma(t))$. It follows that there is a parameter time $t_0$ such that $\gamma(t_0)\in U$ and $\dot{x}^0(t_0)>0$. After an affine change of parameter, we may assume without loss of generality that $t_0=0$. It then follows from \cref{prop:def-bdry-pt} that $\gamma(t)$ tends to a definite point $\gamma_{\infty}\in \partial X$ as $t\to\infty$. This proves part (i).

To prove part (ii) we first note that we may again assume, without loss of generality, that $\gamma(0)\in U$ and that $\dot{x}^0(0)>0$. Then $\gamma$ remains in $U$ and $\dot{x}^0$ remains positive for all $t>0$. Thus we may reparametrize $\gamma$ in terms of $\tau=x^0$. Fixing a Fermi coordinate system $(x^0,x^1,\ldots,x^n)$ in a neighborhood of $\gamma_{\infty}$ we therefore obtain a (smooth) solution $(x^{\al},w^0,w^{\al})$ of the system \cref{eqn:cogeo-tau-w} for $\tau\in [-\delta_0,0)$ from $\gamma$, for some $\delta_0\in [\delta,0)$. By \cref{lem:ext-to-tau-eq-0} this solution extends to a $\cC^1$ solution of  \cref{eqn:cogeo-tau-w} for $\tau\in [-\delta_0,0]$. In particular, $w^0$ is $\cC^1$ up to $\tau=0$. Thus, since $w^0\to 1$ as $\tau\to0^-$ it follows that $w^0 = 1 + O(\tau)$. Hence $w^0 = 1 + O(\rho)$, and recalling that $\dot{x}^0 = \rho w^0$ we see that 
\begin{equation}
    \dot{x}^0 = \rho + O(\rho^2).
\end{equation}
Similarly, from \cref{lem:v-al-log-rho-bound} we have that $v^{\al} = O(\log \rho)$ and recalling that $\dot{x}^{\alpha}= \rho^2 v^{\alpha}$ it follows that
\begin{equation}\label{eq:rough-dot-x-al-asymp}
    \dot{x}^{\alpha} =  O(\rho^2\log\rho).
\end{equation}
This proves (ii).
\begin{remark}\label{rem:x-alpha-dot-more-precise-asymp}
In fact, from the formula $\dot{x}^{\alpha} = \rho^2 v^{\alpha}=\rho^2  L^{\alpha}_{\beta}(w^{\beta}+ A^{\beta})$ and the fact that $w^{\be}$ is bounded we obtain a more precise asymptotic statement than \cref{eq:rough-dot-x-al-asymp}. Since $L$ is smooth up to $\partial X$ (in the Fermi coordinates) and equal to the identity on $\partial X$, from the definition \cref{eq:A} of $A^{\beta}$ we see that 
\begin{equation}\label{eq:x-alpha-dot-more-precise-asymp}
     \dot{x}^{\alpha} = \frac{\kappa^{\alpha}}{\kappa^2}\cdot \rho^2\log{\rho} + O(\rho^2),
\end{equation}
where we have used that $w^0=1+O(\rho)$ and $\log|x^0| = \log\rho+O(1)$. In particular, the coefficient of $\rho^2\log{\rho}$ in \cref{eq:x-alpha-dot-more-precise-asymp} depends only on the boundary endpoint $q=\gamma_{\infty}$ and not on the particular geodesic $\gamma$ tending to $q$. Moreover, the $\rho^2\log{\rho}$-coefficient vanishes at $q$ if and only if $d\kappa|_q=0$. We will make use of this remark in the proof of \cref{thm:AH-characterization,thm:shooting-geodesics-from-boundary}.
\end{remark}

We now turn to the proof of (iii). Since the image of a geodesic is a smooth immersed curve, the problem localizes near $\gamma_{\infty}$. We therefore assume, without loss of generality, that $\gamma(0)\in U$, $\dot{x}^0(0)>0$ and that  $\gamma([0,\infty))\cup\{\gamma_{\infty}\}$ is contained in a single Fermi cordinate chart. As before, we reparametrize $\gamma$ using $\tau = x^0$ and obtain a smooth solution of \cref{eqn:cogeo-tau-w} for $\tau \in [-\delta_0,0)$ that extends to a $\cC^1$ solution for $\tau \in [-\delta_0,0]$. It follows that $\gamma([0,\infty))\cup\{\gamma_{\infty}\}$ is a $C^{1}$-embedded curve in $\oX$ (recall that we have localized near $\gamma_{\infty}$). 
That the curve meets the boundary orthogonally then follows from part (ii) and the fact that $(x^0,x^{\alpha})$ are Fermi coordinates for $h$, since (ii) implies that $\dot{x}^{\alpha}/\dot{x}^0 = O(\rho\log\rho)$ and hence $\dot{x}^{\alpha}/\dot{x}^0\to 0$ as $t\to\infty$, equivalently, $\frac{dx^{\alpha}}{d\tau}\to 0$ as $\tau\to 0^{-}$. Since $\gamma([0,\infty)) \cup \{\gamma_{\infty}\}$ is smooth away from $\gamma_{\infty}$ it remains only to show that the curve is $\cC^{1, \alpha}$ up to the conformal infinity, for $\alpha\in (0,1)$. To see this we note that $\frac{dx^{\alpha}}{d\tau}=\rho L^{\alpha}_{\beta}(w^{\beta}+A^{\beta})/w^0$ ($\alpha =1,\ldots , n$). Since $(x^{\alpha},w^0, w^{\al})$ is $\cC^1$ on $[-\delta_0,0]$ it follows that $\frac{dx^{\alpha}}{d\tau}$ is $\cC^{\alpha}$ as a function of $\tau \in [-\delta_0,0]$. Thus $\gamma([0,\infty))\cup\{\gamma_{\infty}\}$ is $\cC^{1,\alpha}$-embedded near $\partial X$. This proves (iii).

To prove (iv) we note first that for any $v\in T_{\gamma(0)}X$ there is a smooth $1$-parameter family of geodesics $\gamma_s:[0,\infty)\to X$ with initial velocity given by $\dot{\gamma}_s(0)= \cos(s)\dot{\gamma}(0)+\sin(s)v$ for $s\in \mathbb{R}$. Let $T>0$ be such that $\gamma(T)\in U$ and $\dot{x}^0(T)>0$, then for all $s$ near $0$ we also have $\gamma_s(T)\in U$ and $\dot{x}^0_s(T)>0$. It follows that there is a Fermi coordinate chart containing $\gamma_{\infty}$ such that the closure of $\gamma_s([T,\infty))$ lies in the chart for all $s$ near $0$. Hence there is a $\delta_0>0$ such that for all $s$ near $0$ the curve $\gamma_s$ gives rise to a solution $(x^{\alpha}_s, w^0_s, w^{\alpha}_s)$ of the system \cref{eqn:cogeo-tau-w} for $\tau \in [-\delta_0,0)$. Moreover, by \cref{lem:ext-to-tau-eq-0}, for all $s$ near $0$ the solution $(x^{\alpha}_s, w^0_s, w^{\alpha}_s)$ extends to $\tau =0$, giving a $\cC^1$ solution for $\tau \in [-\delta_0,0]$. Now, from the smooth dependence of $\gamma_s$ on $s$ it follows that the initial conditions $(x^{\alpha}_s(-\delta_0), w^0_s(-\delta_0), w^{\alpha}_s(-\delta_0))$ of these solutions vary smoothly with $s$. By the results of \cref{app:reg} it follows that the values of the solution $(x^{\alpha}_s, w^0_s, w^{\alpha}_s)$ at $\tau=0$ depend smoothly on the initial condition (and hence vary smoothly with $s$). In particular, the coordinates $\left.x^{\alpha}_s\right|_{\tau=0}$ of the boundary limit point $\lim_{t\to\infty}\gamma_s(t)$ of $\gamma_s$ vary smoothly with $s$. This proves (iv).

This concludes the proof of \cref{thm:asymptotic-behavior-of-geodesic}.

\subsection{Geodesics with prescribed data at the conformal infinity}\label{sec:geod-from-infty}
Before we prove our other main theorems, we need some results on going from solutions of \cref{eqn:cogeo-tau-w} for $\tau\in [-\delta_0,0]$ to geodesics for $g$ that tend to a point in the boundary. For $\tau<0$ the system \cref{eqn:cogeo-tau-w} is equivalent to \cref{eqn:cogeo-tau}, and hence equivalent to \cref{eq:cog-flow-ES} (on the open subset of $T^*X$ where the Fermi coordinates are defined and $\xi_0>0$). When a solution of any of these systems satisfies the energy condition $2H=1$ at some $\tau<0$ (or some $t$) then it satisfies $2H=1$ for all $\tau<0$ (or all $t$) and hence gives a solution of the original cogeodesic flow. Moreover, when the energy condition holds for $\tau<0$ it clearly also holds at $\tau=0$, where it is equivalent to the condition $w^0|_{\tau=0}=1$ (cf.\ \cref{eq:ES-vel-var}). Our first lemma shows that by specifying that $w^0=1$ at $\tau=0$ one also ensures that the energy condition holds for $\tau<0$.
\begin{lemma}\label{lem:energy-cond-extends-in}
If $(x^{\alpha},w^0,w^{\alpha})$ is a solution of \cref{eqn:cogeo-tau-w} for $\tau \in [-\delta_0,0]$ with $w^0|_{\tau=0}=1$, then the energy condition \cref{eq:ES-vel-var} 
holds for all $\tau \in [-\delta_0,0]$, where $v^{\alpha}$ is given by $L^{\al}_{\lambda}(w^{\lambda}+A^{\lambda})$.
\end{lemma}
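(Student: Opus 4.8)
The plan is to show that the function $Q := (w^0)^2 + \rho^2 h_{\al\be}v^{\al}v^{\be}$, where $v^{\al} = L^{\al}_{\lambda}(w^{\lambda}+A^{\lambda})$, is identically equal to $1$ along the given solution; indeed the energy condition \cref{eq:ES-vel-var} is precisely the statement $Q\equiv 1$. First observe that, since the right-hand side of \cref{eqn:cogeo-tau-w} is defined only where $w^0\neq 0$, the given solution has $w^0$ continuous and nonvanishing on $[-\delta_0,0]$, hence (as $w^0|_{\tau=0}=1$) $w^0>0$ throughout, so $1/w^0$ is bounded there. Over $[-\delta_0,0)$ the substitutions $\xi_0 = w^0/\rho$, $\xi_{\al}=h_{\al\be}v^{\be}$ are invertible and, together with the reparametrization $dt/d\tau = 1/(\rho w^0)$ (say $t=0$ at $\tau=-\delta_0$, so $t\to+\infty$ as $\tau\to 0^-$), convert the given solution into a curve in $T^*X$ that, by unwinding the changes of variables made in \cref{sec:extending}, satisfies a system differing from the genuine cogeodesic flow \cref{eq:orig-cogeod-flow} only by terms proportional to $(1-Q)$ (these arise from the passage to the energy-surface forms \cref{eq:cog-flow-ES} and \cref{eqn:cogeo-vel-var-wd}; after cancellation the net deviation is a single term $-\rho^{-1}\rho_{\al}(1-Q)$ in the equation for $\dot{\xi}_{\al}$).

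Since $Q=2H$ is conserved by the honest cogeodesic flow, it follows that along our solution $\tfrac{dQ}{dt} = \tfrac{\partial Q}{\partial \xi_{\al}}\bigl(-\rho^{-1}\rho_{\al}(1-Q)\bigr) = -2\rho\,\rho^{\al}\xi_{\al}(1-Q)$, where $\rho^{\al}=h^{\al\be}\rho_{\be}$. Now $\rho^{\al}=\rho k^{\al}$ with $k^{\al}$ smooth up to $\partial X$, and $\xi_{\al}=h_{\al\be}v^{\be}=O(\log\rho)$ (immediate from the definition \cref{eq:A} of $A^{\al}$ together with the boundedness of $w^0$, $w^{\al}$ and of the $x^{\al}$ on $[-\delta_0,0]$), so $\rho\rho^{\al}\xi_{\al}=O(\rho^2\log\rho)$. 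Converting back to the parameter $\tau$ gives $\tfrac{dQ}{d\tau} = \Phi(\tau)\,(1-Q)$ on $(-\delta_0,0)$, where $\Phi := -2\rho^{\al}\xi_{\al}/w^0 = O(\tau\log|\tau|)$; in particular $\Phi$ extends continuously to $\tau=0$ (with value $0$) and is integrable on $[-\delta_0,0]$.

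It remains to identify the boundary value of $Q$ and run a Gr\"onwall argument. Although $v^{\al}$ itself diverges like $\log|\tau|$ as $\tau\to 0^-$, because of the shift by $A^{\al}$, we have $\rho=O(|\tau|)$, whence $\rho^2 h_{\al\be}v^{\al}v^{\be}=O(\tau^2(\log|\tau|)^2)\to 0$; combined with $w^0|_{\tau=0}=1$ this shows $Q$ extends continuously to $[-\delta_0,0]$ with $Q(0)=1$. Set $P:=1-Q$; it is continuous on $[-\delta_0,0]$, is $\cC^1$ on $(-\delta_0,0)$, satisfies $P' = -\Phi P$, and has $P(0)=0$. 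For any $\tau\in[-\delta_0,0)$ and $\tau'\in(\tau,0)$ we have $P(\tau)=P(\tau')\exp\bigl(\int_{\tau}^{\tau'}\Phi\bigr)$; letting $\tau'\to 0^-$ and using $P(\tau')\to 0$ together with convergence of the integral forces $P(\tau)=0$. Hence $Q\equiv 1$ on $[-\delta_0,0]$, which is \cref{eq:ES-vel-var}.

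The step requiring the most care is the first one: verifying, by tracking the sequence of (singular) changes of dependent variable from \cref{sec:extending} back to the original momentum variables, that the transformed system genuinely differs from the honest cogeodesic flow only through terms linear in the energy defect $1-Q$, so that $Q$ is ``almost conserved.'' Once this structural fact is in hand the remainder is a routine linear-ODE uniqueness argument, the only subtlety being that the logarithmically divergent velocity variables still yield $Q(0)=1$ — which is precisely the point at which the singular substitution $w^{\al}=\hat v^{\al}-A^{\al}$ turns out to be harmless.
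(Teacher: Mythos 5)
Your proof is correct. The structural fact you flag as the delicate step does check out: unwinding the substitutions, the only place the energy identity was used irreversibly is the $\dot{\xi}_{\alpha}$ (equivalently $\dot{v}^{\alpha}$) equation coming from \cref{eq:cog-flow-ES}, while the two uses of the identity in the $w^0$-equation cancel, so that \cref{eqn:cogeo-vel-var-wd} agrees exactly with the honest flow; hence the deviation is precisely $-\rho^{-1}\rho_{\alpha}(1-Q)$ and your identity $\frac{dQ}{d\tau}=\Phi(1-Q)$ with $\Phi=-2\rho^{\alpha}\xi_{\alpha}/w^0=O(\tau\log|\tau|)$ is right, as is the backwards integrating-factor argument from $Q(0)=1$.

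Your route differs from the paper's. The paper augments \cref{eqn:cogeo-tau-w} by $\frac{dx^0}{d\tau}=1$, views the result as a vector field $\frac{\partial}{\partial x^0}+V$ proportional (for $x^0<0$) to the modified cogeodesic field $\tilde V_H$, observes that $H=\frac12\bigl((w^0)^2+\rho^2h_{\alpha\beta}L^{\alpha}_{\gamma}L^{\beta}_{\delta}(w^{\gamma}+A^{\gamma})(w^{\delta}+A^{\delta})\bigr)$ is $\cC^1$ up to $x^0=0$ in the $(x^0,x^{\alpha},w^0,w^{\alpha})$-variables, so the field stays tangent to $\{2H=1\}$ up to the boundary, and then concludes by an invariance-of-the-level-set argument that a trajectory meeting $\{2H=1\}$ at $\tau=0$ (where $2H=1$ is exactly $w^0=1$) lies in it throughout. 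You instead turn the same tangency into a scalar linear ODE for the energy defect $P=1-Q$ with an explicitly estimated, integrable coefficient, check $Q\to1$ at $\tau=0$ despite the logarithmic divergence of $v^{\alpha}$, and finish with a backwards Gr\"onwall. Both arguments rest on the same observation that the system differs from the true cogeodesic flow only through terms proportional to $1-2H$; the paper's version is shorter and coordinate-geometric, relying on $\cC^1$-regularity of $H$ up to the boundary rather than on any growth estimate, while yours is more quantitative and spells out concretely how the single boundary condition $w^0|_{\tau=0}=1$ propagates inward — a step the paper disposes of with the tangency/uniqueness assertion. As a small stylistic point, your detour through the $t$-parametrization is unnecessary (one can compute $dQ/d\tau$ directly from \cref{eqn:cogeo-tau-w-with-v}), but it is harmless since for $\tau<0$ the reparametrization is smooth.
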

\begin{proof} 
Fix a Fermi coordinate system. Note that for $x^0<0$ and $w^0>0$ the variables $(w^0,w^{\alpha})$ and $(w^0,v^{\alpha})$ are smoothly related to $(\xi_0,\xi_{\alpha})$ and can be regarded as nonstandard fiber coordinates for $T^*X$. If we augment the system \cref{eqn:cogeo-tau-w} with the trivial equation $\frac{dx^0}{d\tau}=1$ then, in the open subset of $T^*X$ where the Fermi coordinates are defined and $\xi_0>0$ (equivalently, $w^0>0$), we may think of the augmented system as describing the flow of a vector field on $T^*X$ of the form $\frac{\partial}{\partial x^0} + V$ (where $V$ depends on $x^0$ but has no $\frac{\partial}{\partial x^0}$-component). Clearly \cref{eq:cog-flow-ES} also describes the flow of a vector field, and we denote this vector field by $\tilde{V}_{H}$ (note that $\tilde{V}_{H}$ is not the Hamiltonian vector field, $V_{H}$, of $H$, but differs from $V_{H}$ by a vector field that vanishes on the energy surface $2H=1$). By construction, $\frac{\partial}{\partial x^0} + V$ is proportional to $\tilde{V}_{H}$; indeed, $\frac{\partial}{\partial x^0} + V = \frac{1}{\rho w^0}\tilde{V}_{H}$ (where $w^0=\rho \xi_0$). For $x^0<0$ both of these vector fields are tangent to the energy surface $2H=1$. Fixing $(x^0,x^{\alpha}, w^0,w^{\alpha})$ as our preferred coordinates on $T^*X$, the vector field  $\frac{\partial}{\partial x^0} + V$ extends continuously to $x^0=0$ (though not as a vector field on $T^*\oX$, as our coordinate system is singular relative to a coordinate system for $T^*\oX$ about a point with $x^0=0$; we are now forgetting about the cotangent bundle and working only in the $(x^0,x^{\alpha}, w^0,w^{\alpha})$-coordinates). Since 
\begin{equation}
H= \frac{1}{2}\left((w^0)^2 + \rho^2h_{\al\be}L^{\al}_{\ga}L^{\be}_{\de}(w^{\ga}+A^{\ga})(w^{\de}+A^{\de})\right)
\end{equation}
is $\cC^1$ up to $x^0=0$ in this coordinate system, it follows that $\frac{\partial}{\partial x^0} + V$ remains tangent to the level set $2H=1$ up to $x^0=0$. Therefore, an integral curve of $\frac{\partial}{\partial x^0} + V$ that lies on the energy surface $2H=1$ at one point (with $x^0\leq 0$) must lie entirely in the energy surface. Note that at $x^0=0$ the energy condition $2H=1$ is equivalent to $w^0=1$ ($w^0=-1$ is ruled out because we require $w^0>0$). Since solutions of \cref{eqn:cogeo-tau-w} correspond to the integral curves of $\frac{\partial}{\partial x^0} + V$ with $x^0(\tau)=\tau$, the result follows.
\end{proof}

\cref{lem:energy-cond-extends-in} ensures that we obtain solutions to the cogeodesic flow  for $g$ near $\partial X$ from solutions of \cref{eqn:cogeo-tau-w} with $w^0|_{\tau=0}=1$. We state this as another lemma:
\begin{lemma}\label{lem:geodesics-from-w-and-tau-system}
If $(x^{\alpha},w^0,w^{\alpha})$ is a solution of \cref{eqn:cogeo-tau-w} for $\tau \in [-\delta_0,0]$ with $w^0|_{\tau=0}=1$, then the curve $\tilde{\gamma}:[-\delta_0,0)\to X$ defined in the given Fermi coordinate system by $\tilde{\gamma}(\tau) = (\tau, x^{\alpha}(\tau))$ is a geodesic for $g$ reparametrized in terms of minus the $h$-distance from $\partial X$.
\end{lemma}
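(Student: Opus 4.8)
The plan is to run the chain of substitutions of \cref{sec:extending} that passes from the cogeodesic flow to \cref{eqn:cogeo-tau-w} \emph{backwards}, recovering a cotangent lift, and then to reintroduce the affine parameter; the one extra ingredient is \cref{lem:energy-cond-extends-in}, which guarantees that the recovered curve lies on the energy surface $\{2H=1\}$ and hence is a genuine cogeodesic rather than merely a solution of the energy-surface system \cref{eq:cog-flow-ES}.

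First I would note that, since $w^0$ is $\cC^1$ on $[-\delta_0,0]$ with $w^0|_{\tau=0}=1$, after shrinking $\delta_0$ if necessary we may assume $w^0>0$ on $[-\delta_0,0]$; of course $\rho>0$ for $\tau<0$. By \cref{lem:energy-cond-extends-in} the energy condition \cref{eq:ES-vel-var} holds for all $\tau\in[-\delta_0,0]$, where $v^{\alpha}:=L^{\alpha}_{\lambda}(w^{\lambda}+A^{\lambda})$. Now, for $\tau<0$, I would define momentum variables $\xi_0(\tau)=w^0(\tau)/\rho$ and $\xi_{\alpha}(\tau)=h_{\alpha\beta}v^{\beta}(\tau)$; these invert the substitutions $\zeta_0=\rho\xi_0$, $v^i=h^{ij}\xi_j$, $\hat v^{\alpha}=M^{\alpha}_{\lambda}v^{\lambda}$ and $w^{\alpha}=\hat v^{\alpha}-A^{\alpha}$ of \cref{sec:extending}, each of which is a diffeomorphism in the region $\{x^0<0\}$ (there $M$ is invertible and $A^{\alpha}$ is smooth). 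Since, moreover, the passage from \cref{eqn:cogeo-vel-var} to \cref{eqn:cogeo-tau} --- and thence to \cref{eqn:cogeo-tau-w} --- was simply the change of independent variable $t\mapsto\tau=x^0$, legitimate because $\dot x^0=\rho w^0>0$, it follows that the curve $\tau\mapsto(x^0,x^{\alpha},\xi_0,\xi_{\alpha})$ with $x^0=\tau$ solves the reparametrized cogeodesic system \cref{eqn:cogeo-tau}, equivalently is an integral curve of the vector field $\frac{\partial}{\partial x^0}+V=\frac{1}{\rho w^0}\tilde{V}_{H}$ appearing in the proof of \cref{lem:energy-cond-extends-in}.

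I would then reintroduce the affine parameter: let $t=t(\tau)$ solve $\frac{dt}{d\tau}=\frac{1}{\rho w^0}$ on $[-\delta_0,0)$, which is a smooth strictly increasing reparametrization since $\rho w^0>0$ there. In the $t$-parametrization the curve $(x^i(t),\xi_i(t))$ is then an integral curve of $\tilde{V}_{H}$, i.e.\ a solution of \cref{eq:cog-flow-ES}; and since the energy condition holds along it, $\tilde{V}_{H}$ agrees there with the Hamiltonian vector field $V_{H}$ (the two differ by a term proportional to $1-\rho^2h^{ij}\xi_i\xi_j$), so $(x^i(t),\xi_i(t))$ in fact solves the genuine cogeodesic flow \cref{eq:orig-cogeod-flow}. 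Its base projection $\gamma(t)=(x^0(t),x^{\alpha}(t))$ is therefore a unit-speed geodesic of $g$, and $\tilde\gamma(\tau)=(\tau,x^{\alpha}(\tau))=\gamma(t(\tau))$ is exactly this geodesic reparametrized by $\tau$; by construction the parameter value $\tau$ equals the $x^0$-coordinate of $\tilde\gamma(\tau)$, i.e.\ minus the $h$-distance from $\partial X$. The only genuinely delicate point is checking that the reparametrizations are legitimate --- which reduces to $\rho w^0>0$ for $\tau<0$, handled by the continuity of $w^0$ and the normalization $w^0|_{\tau=0}=1$ --- together with the correct invocation of \cref{lem:energy-cond-extends-in} to place the curve on the energy surface; everything else is bookkeeping, unwinding the explicit invertible substitutions of \cref{sec:extending}.
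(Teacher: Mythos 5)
Your argument is correct and follows essentially the same route as the paper's proof: invoke \cref{lem:energy-cond-extends-in} to place the curve on the energy surface, undo the invertible substitutions of \cref{sec:extending} to recover a solution of \cref{eqn:cogeo-tau} (equivalently \cref{eq:cog-flow-ES}) for $\tau<0$, reparametrize via $\frac{dt}{d\tau}=\frac{1}{\rho w^0}$, and use that on the energy surface \cref{eq:cog-flow-ES} agrees with the genuine cogeodesic flow \cref{eq:orig-cogeod-flow}. One small remark: shrinking $\delta_0$ is unnecessary (and would weaken the stated conclusion), since the right-hand side of \cref{eqn:cogeo-tau-w} involves division by $w^0$, so any solution on $[-\delta_0,0]$ with $w^0|_{\tau=0}=1$ automatically has $w^0>0$ on the whole interval.
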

\begin{proof}
Fix a Fermi coordinate chart for $\partial X$ and suppose $(x^{\alpha},w^0,w^{\alpha})$ is a solution of \cref{eqn:cogeo-tau-w} for $\tau \in [-\delta_0,0]$ with $w^0|_{\tau=0}=1$. Then, \cref{lem:energy-cond-extends-in}, $(x^{\alpha},w^0,w^{\alpha})$ satisfies the energy condition $2H=1$ for all $\tau$. Hence, setting $v^{\al}=L^{\al}_{\lambda}(w^{\lambda}+A^{\lambda})$ we obtain a solution of \cref{eqn:cogeo-tau} satisfying the energy condition \cref{eq:ES-vel-var} for $\tau \in [-\delta_0,0)$. Using that $\rho^{2}h_{\al\be}v^{\al}v^{\be} = 1-(w^0)^2$ it is straightforward to show that 
\begin{equation}
    \left|\frac{d}{d\tau}\tilde{\gamma}\right|_g^2 = \rho^{-2}\left(1+ \frac{\rho^2h_{\al\be}v^{\al}v^{\be}}{(w^0)^2}\right) = \frac{1}{(\rho w^0)^2},
\end{equation}
and it follows that if we introduce an arclength parameter $t$ for $\tilde{\gamma}$ with the same orientation as the parameter $\tau$ then $\frac{d\tau}{dt} = \rho w^0$. For convenience, we fix $t$ by requiring that $t=0$ when $\tau=-\delta_0$. Considering $x^0$ and the solution $(x^{\alpha},w^0,v^{\alpha})$ of \cref{eqn:cogeo-tau} as functions of $t$ rather than $\tau$ we therefore obtain a solution of \cref{eqn:cogeo-vel-var} satisfying \cref{eq:ES-vel-var}. This is, of course, equivalent to a solution of \cref{eq:cog-flow-ES} satisfying \cref{es} and hence an integral curve of the cogeodesic flow \cref{eq:orig-cogeod-flow} corresponding to a unit speed geodesic. Clearly this geodesic is the arclength reparametrization $\gamma:[0,\infty)\to X$ of $\tilde{\gamma}$ by $t$. This proves the result. 
\end{proof}

From \cref{lem:geodesics-from-w-and-tau-system} we immediately obtain the following useful lemma:
\begin{lemma}
For every point $q\in \partial X$ there is a geodesic $\gamma:[0,\infty)\to X$  for $g$ with $\lim_{t\to\infty} \gamma(t) = q$.
\end{lemma}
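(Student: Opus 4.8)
The plan is to construct the desired geodesic by prescribing ``initial data'' for the extended system \cref{eqn:cogeo-tau-w} at the conformal infinity $\tau=0$ and solving backwards into $X$. First I would fix $q\in\partial X$ and choose a Fermi coordinate chart $(x^0,x^1,\ldots,x^n)$ for $\partial X$ with respect to $h$ centered at $q$, so that $q$ has coordinates $(0,0,\ldots,0)$. Recall that, written in the form \cref{eq:ODE}, the right-hand side $V(\tau,y)$ of \cref{eqn:cogeo-tau-w} has the structure \cref{eqn:cogeo-tau-w-RHS-log-expansion}; in particular $V$ and all of its $y$-partial derivatives are continuous on $\tau\in[-\delta,0]$, so $V$ is locally Lipschitz in $y$ uniformly in $\tau\in[-\delta,0]$.

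Next I would invoke the Picard--Lindel\"of theorem to solve \cref{eqn:cogeo-tau-w} with the data $x^{\alpha}|_{\tau=0}=0$, $w^0|_{\tau=0}=1$, and $w^{\alpha}|_{\tau=0}$ prescribed arbitrarily (say $w^{\alpha}|_{\tau=0}=0$). This produces a solution $(x^{\alpha},w^0,w^{\alpha})$ on some interval $[-\delta_0,0]$ with $\delta_0\in(0,\delta]$; shrinking $\delta_0$ if necessary one may assume $w^0>0$ throughout and that the curve $\tau\mapsto(\tau,x^{\alpha}(\tau))$ remains inside the coordinate chart (both hold near $\tau=0$ by continuity, since $w^0=1$ and $x^{\alpha}=0$ there). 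As $V$ is smooth for $\tau<0$, the solution is smooth on $[-\delta_0,0)$.

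Then, since $w^0|_{\tau=0}=1$, \cref{lem:geodesics-from-w-and-tau-system} applies directly: the curve $\tilde{\gamma}(\tau)=(\tau,x^{\alpha}(\tau))$ in the chosen Fermi coordinates is a geodesic for $g$ reparametrized by minus the $h$-distance from $\partial X$, and its unit-speed reparametrization with matching orientation is a geodesic $\gamma:[0,\infty)\to X$ for $g$. By the correspondence established in the proof of that lemma, $t\to\infty$ corresponds to $\tau\to 0^-$ (via $\frac{d\tau}{dt}=\rho w^0$ with $\rho\to 0$ and $w^0\to 1$). Since $x^{\alpha}(\tau)\to x^{\alpha}(0)=0$ as $\tau\to 0^-$, it follows that $\gamma(t)=(\tau(t),x^{\alpha}(\tau(t)))\to(0,0,\ldots,0)$, i.e.\ $\lim_{t\to\infty}\gamma(t)=q$, as desired.

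I do not expect a genuine obstacle here: the statement is an essentially immediate corollary of \cref{lem:geodesics-from-w-and-tau-system} combined with classical ODE existence theory. The only points requiring (minor) care are the choice of admissible initial data at $\tau=0$ (one must take $w^0|_{\tau=0}=1$ so that \cref{lem:geodesics-from-w-and-tau-system} applies and the energy condition propagates inward via \cref{lem:energy-cond-extends-in}) and the verification, through the relation $\frac{d\tau}{dt}=\rho w^0$, that the arclength reparametrization of $\tilde{\gamma}$ is complete in forward time and has limit $q$.
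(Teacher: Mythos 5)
Your proposal is correct and follows essentially the same route as the paper: solve \cref{eqn:cogeo-tau-w} backwards from the data $x^{\alpha}|_{\tau=0}=$ coordinates of $q$, $w^0|_{\tau=0}=1$, $w^{\alpha}|_{\tau=0}=0$ via standard existence theory, then invoke \cref{lem:geodesics-from-w-and-tau-system} to identify the resulting curve as a reparametrized geodesic tending to $q$. Your added verification that $t\to\infty$ corresponds to $\tau\to 0^-$ via $\frac{d\tau}{dt}=\rho w^0$ is a harmless elaboration of a point the paper leaves implicit.
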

\begin{proof}
Given $q\in \partial X$ we fix a Fermi coordinate system containing $q$ and consider the initial value problem for the system \cref{eqn:cogeo-tau-w} with ``initial'' condition at $\tau =0$ given by requiring $x^{\alpha}|_{\tau=0}$ to be the coordinates of $q$, $w^0|_{\tau=0}=1$ and $w^{\al}|_{\tau=0}=0$ (the choice of $w^{\al}|_{\tau=0}$ does not matter). There is then $\delta_0>0$ and a solution of \cref{eqn:cogeo-tau-w} for $\tau\in [-\delta_0,0]$ with these ``initial'' conditions, and $\tilde{\gamma}:[-\delta_0,0)\to X$ defined by $\tilde{\gamma}(\tau) = (\tau, x^{\alpha}(\tau))$ is then the required geodesic (up to reparametrization). This proves the lemma.
\end{proof}

\subsection{Proof of Theorem \ref{thm:AH-characterization}} Here we prove \cref{thm:AH-characterization}, which characterizes asymptotically hyperbolic manifolds in terms of the boundary regularity of geodesics.


Let $(X, g)$ be a conformally compact Riemannian manifold with $g= \rho^{-2}h$ and suppose $\partial X$ is connected. That asymptotic hyperbolicity (condition (i)) implies (ii) and (iii) is well known and follows easily from the regularity of the system \cref{eqn:cogeo-tau} in the asymptotically hyperbolic case (and the fact that $\dot{x}^{\alpha} = \rho^2v^{\alpha}$ for (ii)), cf.\ \cite{Mazzeo-Thesis,ChenHassell-I}. (That (i) implies (ii) is also easily seen from \cref{rem:x-alpha-dot-more-precise-asymp}.)

To see that (ii) implies (i) we note that, by \cref{eq:x-alpha-dot-more-precise-asymp}, if $\dot{\gamma}^{\alpha}=\dot{x}^{\alpha}$ is $O(\rho^2)$ then $\kappa_{\alpha}=0$ at $\gamma_{\infty}$. Since there is a geodesic tending toward every boundary point it follows that (ii) implies $d\kappa =0$ on $\partial X$ and hence $\kappa$ is constant. Thus (ii) implies (i). Clearly then (ii) also implies (iii). 

To see that (iii) implies (ii), suppose (ii) does not hold. Then, again by \cref{eq:x-alpha-dot-more-precise-asymp}, there is a point $q$ and a geodesic $\gamma:[0,\infty)\to X$ tending to $q$ such that $d\kappa|_q \neq 0$. Moreover, $\dot{x}^{\al} = \kappa^{-2}\kappa^{\alpha}|_{q}\cdot \rho^2\log{\rho} + O(\rho^2)$ implies that $\frac{dx^{\alpha}}{d\tau} = \kappa^{-2}\kappa^{\alpha}|_q\cdot\rho\log{\rho} + O(\rho)$, which implies that $x^{\alpha}(\tau)$ is not smooth up to $\tau=0$. Hence, if (ii) does not hold, then (iii) does not hold. Thus (iii) implies (ii).

This concludes the proof of \cref{thm:AH-characterization}.

\subsection{Proof of Theorem \ref{thm:boundary-exponential}}

Here we prove \cref{thm:boundary-exponential} from the introduction.

We first prove (i), stating that if $p \in X$ is sufficiently close to $\partial X$ then there is an open subset $V_p$ in the unit tangent space $S_p X \subset T_p X$ such that all geodesics in $(X,g)$ with initial direction in $V_p$ tend to a point in the boundary $\partial X$. For this we only require the results of \cref{subsec:prelim-obs}. If $p\in U$ then, taking $V_p \subseteq S_p X$ to be all unit tangent vectors with positive $\frac{\partial}{\partial x^0}$-component, we know from \cref{prop:elem-obs,prop:def-bdry-pt} that any geodesic $\gamma:[0,\infty)\to X$ with $\dot{\gamma}(0)\in V_p$ tends to a definite point in the boundary $\partial X$ as $t\to\infty$. This proves (i).

With $V_p$ thus defined for $p\in U$ we let $\exp_{p,\infty}:V_p\to \partial X$ denote the map that takes the initial velocity $\dot{\gamma}(0)\in V_p$ of a geodesic $\gamma:[0,\infty)\to X$ emanating from $p$ to the endpoint $\gamma_{\infty} = \lim_{t\to\infty}\gamma(t) \in \partial X$. Before we prove part (ii) of \cref{thm:boundary-exponential} we establish another lemma.
\begin{lemma}\label{lem:rho-t-asymp}
    Let $\gamma:[0,\infty)\to X$ be a geodesic for $g$ with $\gamma(0)\in U$ and $\dot{x}^0(0)>0$. Then $\rho \sim e^{-\kappa t}$ along $\gamma$, where $\kappa$ is evaluated at $q=\gamma_{\infty}$. That is, there are constants $C_1,C_2>0$ such that $C_1 e^{-\kappa(q) t} \leq \rho(\gamma(t)) \leq C_2 e^{-\kappa(q) t}$ for all $t\in [0,\infty)$.
\end{lemma}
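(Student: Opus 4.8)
The plan is to obtain the two-sided bound by deriving a scalar first-order ODE for $\log\rho$ along $\gamma$ and integrating it, after showing that $\tfrac{d}{dt}\log\rho$ differs from the constant $-\kappa(q)$ by an integrable error. Since the hypotheses already force $\gamma(t)\in U$ for all $t$ (by \cref{prop:elem-obs}) with $\gamma(t)\to q=\gamma_\infty$, I would begin by computing $\dot\rho=\rho_0\dot x^0+\rho_\alpha\dot x^\alpha$ along $\gamma$ and inserting $\dot x^0=\rho w^0$, the expansions $\rho_0=-\kappa+O(x)$ and $\rho_\alpha=O(x)$ from \cref{eq:rho0} and \cref{eq:rho-al}, and the estimate $\dot x^\alpha=O(\rho^2\log\rho)$ from \cref{thm:asymptotic-behavior-of-geodesic}(ii); here $\kappa$ in $\rho_0=-\kappa+O(x)$ is evaluated at the boundary projection $\gamma^{\partial X}(t)$, and one uses that $x$ and $\rho$ are comparable on $U$. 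This gives
\begin{equation}
    \frac{d}{dt}\log\rho \;=\; \frac{\dot\rho}{\rho} \;=\; -\kappa\big(\gamma^{\partial X}(t)\big)\,w^0(t) + O(\rho),
\end{equation}
an estimate that is intrinsic (so there is no need to restrict to a single Fermi chart).

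Next I would quantify the decay of the error. \cref{lem:t-tau-basic-asym} gives $\rho(\gamma(t))=O(e^{-\kappa_2 t})$ for some $\kappa_2>0$; integrating the bound $|\dot\gamma^{\partial X}|_{h_0}=O(e^{-\kappa_2 t})$ from the proof of \cref{prop:def-bdry-pt} yields $d_{h_0}(\gamma^{\partial X}(t),q)=O(e^{-\kappa_2 t})$, whence $\kappa(\gamma^{\partial X}(t))=\kappa(q)+O(e^{-\kappa_2 t})$ by smoothness of $\kappa$; and $w^0=1+O(\rho)=1+O(e^{-\kappa_2 t})$ from the $\cC^1$ extension in \cref{lem:ext-to-tau-eq-0}. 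Combining these, $\tfrac{d}{dt}\log\rho=-\kappa(q)+r(t)$ with $r$ continuous on $[0,\infty)$ and $r(t)=O(e^{-\kappa_2 t})$, so $\int_0^\infty|r|<\infty$. Integrating, $\log\rho(\gamma(t))=\log\rho(\gamma(0))-\kappa(q)t-\int_0^t r(s)\,ds$, and since $\int_0^t r$ is bounded uniformly in $t$ (the improper integral converges), $\log\rho(\gamma(t))+\kappa(q)t$ is bounded above and below, which is precisely the claim $C_1 e^{-\kappa(q)t}\le\rho(\gamma(t))\le C_2 e^{-\kappa(q)t}$.

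The only substantive point (and the main obstacle) is assembling the integrable error bound $r(t)=O(e^{-\kappa_2 t})$; this is a routine combination of three facts already established — exponential decay of $\rho$ along $\gamma$, exponential convergence of $\gamma^{\partial X}$ to $q$, and $w^0=1+O(\rho)$ — after which everything reduces to integrating a one-variable ODE. One could equally run the argument with $\log x$ in place of $\log\rho$, using $\dot x=-\rho w^0$ and $\rho/x=\kappa(\gamma^{\partial X})+O(x)$ from \cref{eq:rho-vs-x}, and then transfer the conclusion to $\rho$ using that $\rho/x$ is bounded between positive constants on $U$.
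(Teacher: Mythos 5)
Your proposal is correct and is essentially the paper's argument read in the other direction: the paper integrates $\frac{dt}{d\tau}=\frac{1}{\rho w^0}=-\frac{1}{\kappa(q)\tau}+O(1)$ in $\tau$ to get $-\kappa(q)t=\log|\tau|+O(1)$, while you integrate the equivalent relation $\frac{d}{dt}\log\rho=-\kappa(q)+O(e^{-\kappa_2 t})$ in $t$, with the same inputs ($w^0=1+O(\tau)$ from the $\cC^1$ extension, $\rho=\kappa x+O(x^2)$, and convergence of the boundary projection to $q$). Your closing remark about running the argument with $\log x$ and $\dot x=-\rho w^0$ is precisely the paper's computation, so no substantive difference remains.
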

\begin{proof}
    Since $\rho/x^0$ is negative, bounded and bounded away from zero on $U$ it suffices to show that $x^0\sim - e^{-\kappa t}$. To see this we recall that (with $\tau=x^0$) we have $\frac{dt}{d\tau} = \frac{1}{\rho w^0} = -\frac{1}{\kappa \tau} + O(\rho) = -\frac{1}{\kappa \tau} + O(\tau)$,  where $\kappa$ is evaluated at $q=\gamma_{\infty}$. Hence $-\kappa \frac{dt}{d\tau} = \frac{1}{\tau} + O(\tau)$, so that $-\kappa t = \log |\tau| + O(1)$. It follows that $|\tau| \sim e^{-\kappa t}$ and hence $\tau\sim - e^{-\kappa t}$. This proves the result.
\end{proof}

We now establish part of (ii) \cref{thm:boundary-exponential} by showing that for each $p\in U$ the map $\exp_{p,\infty}$ is a local $\cC^{\infty}$-diffeomorphism. That $\exp_{p,\infty}$ is $\cC^{\infty}$-smooth follows from \cref{thm:asymptotic-behavior-of-geodesic}(iv). Moreover, from the results above, for a unit vector $v$ orthogonal to $v_0\in V_p\subseteq S_pX$ it follows that if $\gamma_s:[0,\infty)\to X$ is the family of geodesics with initial velocity $\cos(s)v_0 + \sin (s)v\in V_p$ defined for $s$ near $0$ and $\gamma_{\infty}(s)=\gamma_{s,\infty}$ is the curve in $\partial X$ traced by the boundary endpoints of the curves $\gamma_s$ then
\begin{equation}
    \left.\frac{d}{ds}\right|_{s=0} \gamma_{\infty}(s) = \lim_{t\to\infty} J(t),
\end{equation}
where $J(t) = \left.\frac{d}{ds}\right|_{s=0} \gamma_s(t)$ is the Jacobi field associated with the variation $\gamma_s$ of the geodesic $\gamma = \gamma_0$. To show that $\exp_{p,\infty}$ is a local diffeomorphism it suffices to show that $\left.\frac{d}{ds}\right|_{s=0} \gamma_{\infty}(s)\neq 0$ when $v\neq 0$. Note that, by \cref{eq:curvature-asymp}, the maximum $K_1(t)$ and minimum $K_2(t)$ of the sectional curvatures at $\gamma(t)$ are both of the form $-\kappa^2 + O(\rho) = -\kappa^2 + O(e^{-\kappa t})$, where $\kappa$ is evaluated at $\gamma_{\infty}$. It therefore follows from the Rauch comparison theorem that 
\begin{equation}
    |J(t)|_g \sim e^{\kappa t}|v|_g
\end{equation}
for $t$ large (note that if $(X,g)$ were the hyperbolic space with constant sectional curvature $\kappa$ then the length of the Jacobi field $J(t)$ with respect to $g$ would be $\kappa^{-1}\sinh (\kappa t)|v|_g$). Hence, by \cref{lem:rho-t-asymp},
\begin{equation}
    |J(t)|_h = \rho |J(t)|_g \sim |v|_g
\end{equation}
for $t$ large. In particular, when $v\neq 0$ the length $|J(t)|_h$ of the Jacobi field is bounded away from zero for large $t$ and hence $ \left.\frac{d}{ds}\right|_{s=0} \gamma_{\infty}(s) = \lim_{t\to\infty} J(t)\neq 0$. It follows that the differential of $\exp_{p,\infty}$ is injective at $v_0$. Since $v_0$ was arbitrary, we conclude that $\exp_{p,\infty}$ is a local diffeomorphism. This proves part (ii) of \cref{thm:boundary-exponential}

It remains to prove part (iii), which states that for every point $p_{\infty}\in \partial X$ there is a point $p\in U$ and a subset $V_{p,p_{\infty}}$ of $V_p$ such that $\exp_{p,\infty}$ restricts to a diffeomorphism from $V_{p,p_{\infty}}$ to a neighborhood of $p_{\infty}$ in $\partial X$. This follows easily from part (ii) and \cref{lem:geodesics-from-w-and-tau-system},  since for every point $p_{\infty}\in \partial X$ there is a point $p\in U$ and a geodesic $\gamma:[0,\infty)\to X$ with $\dot{x}^0(0)>0$ such that $\lim_{t\to\infty} \gamma(t) = p_{\infty}$. Without loss of generality, we can assume that $\gamma(0)\in U$ and $\dot{x}^0(0)>0$ (so that $\dot{\gamma}(0)\in V_p$ by definition). Therefore, being a local diffeomorphism, the map $\exp_{p,\infty}:V_p\to \partial X$ must restrict to a diffeomorphism from some neighborhood $V_{p,p_{\infty}}$ of $\dot{\gamma}(0)$ to a neighborhood of $p_{\infty}$. This proves (iii).

This concludes the proof of \cref{thm:boundary-exponential}.


\subsection{Proof of Corollary \ref{cor:smooth-boundary-extension}} \label{sec:proof-of-cor}

Here we give a proof of \cref{cor:smooth-boundary-extension}. Before we give the proof it is useful to note that if $(X,g)$ is a conformally compact manifold, then the topological manifold structure of $\oX=X\cup \partial X$ is determined by the geometry of $(X,g)$. Points in $\partial X$ can be thought of as equivalence classes of geodesics $\gamma:[0,\infty)\to X$ in $(X,g)$ that tend to the same endpoint in $\partial X$ (this can be charaterized intrinsically using the notion of \textit{asymptotic} geodesics from \cite{EberleinO'Neill1973}). One can also construct $\cC^0$ coordinate charts for $\oX$ in a neighborhood of any point $p_{\infty}\in \partial X$ intrinsically from the geometry of $(X,g)$. We briefly sketch one way to do this: Fix a unit speed geodesic $\gamma:[0,\infty)\to X$ tending to $p_{\infty}$ such that $\gamma(0)\in U$ and $\dot{x}^0(0)>0$ and set $p=\gamma(0)$. Let $V_{p,p_{\infty}}\subset V_p$ be a neighborhood of $\dot{\gamma}(0)$ such that the map $V_{p,p_{\infty}}\times (0,\infty)\to X$ defined by $(v,t)\mapsto \exp_p(tv)$ is injective. Viewing $(v,r=e^{-t})\in V_{p,p_{\infty}}\times (0,1)$ as coordinates for the image of this map and extending $r$ to $\partial X$ by zero (and shrinking $V_{p,p_{\infty}}$ if necessary), the coordinates $(v,r)$ extend to a $\cC^0$ coordinate system for $\oX$ containing $p_{\infty}$ with $(v,r)\in V_{p,p_{\infty}}\times [0,1)$. 
\begin{remark}
Note that, geometrically, it would be better to take $r=e^{-\kappa t}$ along each geodesic with initial velocity $v$, where $\kappa$ is evaluated at the endpoint of the geodesic, since then $r\sim \rho$ (as opposed to a variable power of $\rho$) but this does not matter at present since we only require $\cC^0$ regularity. We will return to this point in \cref{sec:normal-reg} below.
\end{remark}

From the above discussion it is clear that an isometry between two conformally compact manifolds must extend continuously to the boundary (since it clearly takes such a coordinate system for the domain to another such coordinate system for the target, and when viewed with respect to these two $\cC^0$ coordinate systems therefore becomes the trivial map $(v,r)\mapsto (v,r)$ for $r>0$ which is, of course, continuous up to the boundary $r=0$). The continuous extension is clearly also a homeomorphism.

To prove \cref{cor:smooth-boundary-extension} it remains to show that if $(X_1,g_1)$ and $(X_2, g_2)$ are conformally compact Riemannian manifolds and $F$ is an isometry from $(X_1, g_1)$ to $(X_2, g_2)$ then the continuous extension $F:\oX_1\to\oX_2$ restricts to a $\cC^{\infty}$-smooth conformal diffeomorphism $f:\partial X_1 \to \partial X_2$. The smoothness of the map $f$ follows easily from \cref{thm:boundary-exponential}: If $p_{\infty}\in \partial X_1$ then there is $p\in X_1$ such that $\exp^{g_1}_{p,\infty}:V_{p,p_{\infty}}\to \partial X_1$ gives a smooth parametrization of a neighborhood of $p_{\infty}$. Since $F$ is an isometry (and extends to a homeomorphism $\oX_1\to\oX_2$) it follows that if $p'=F(p)$, $p'_{\infty} = f(p_{\infty})$ and $V'_{p,p_{\infty}}=dF_{p}(V_{p,p_{\infty}})$ then $\exp^{g_2}_{p',\infty}:V_{p',p'_{\infty}}\to \partial X_2$ gives a smooth parametrization of a neighborhood of $p'_{\infty}=f(p)$. Moreover, since $f$ takes the endpoint $\gamma_{\infty}$ of a geodesic $\gamma$ in $X_1$ to the endpoint $\gamma'_{\infty}$ of the geodesic $\gamma'=F\circ \gamma$ in $X_2$, near $p_{\infty}$ we have
\begin{equation}
f = \exp^{g_2}_{p',\infty} \,\circ \; dF_p \,\circ\,(\exp_{p,\infty}^{g_1})^{-1},
\end{equation}
which is clearly smooth. Hence $f:\partial X_1 \to \partial X_2$ is $\cC^{\infty}$.

It now remains only to show that $f:\partial X_1 \to \partial X_2$ is a conformal diffeomorphism. One way to see this is as follows: Let $p_{\infty} \in \partial X_1$ and let $\gamma:[0,\infty)$ be a geodesic tending to $p_{\infty}$ with $\gamma(0)=p\in U$ and $\dot{x}^0(0)>0$. Let $v_0=\dot{\gamma}(0)$ and let $(v_1,\ldots, v_n)$ be an orthonormal frame for $\dot{\gamma}(0)^{\perp}\subseteq T_p X_1$ and for each $\alpha\in \{1,\ldots,n\}$ let $J_{\alpha}$ be the Jacobi field along $\gamma$ with $J_{\alpha}(0)=0$ and $\nabla_{\dot{\gamma}(0)}J_{\alpha} = v_{\alpha}$. Then $J_1(t),\ldots, J_n(t)$ are linearly independent for all $t>0$. Moreover, $J_{\alpha,\infty}= \lim_{t\to \infty}J_{\alpha}(t)$ is equal to $d  \exp^{g_1}_{p,\infty}|_p(v_{\alpha})$ and $(J'_{1,\infty},\ldots, J'_{n,\infty})$ is a basis for $T_{p_{\infty}}\partial X_1$. Now, since $F$ is an isometry, it takes $\gamma$ to a geodesic $\gamma'$ and each $J_{\alpha}$ to a Jacobi field $J'_{\alpha}$ along $\gamma'$ such that $J'_1(t),\ldots, J'_n(t)$ are linearly independent for all $t>0$. Moreover, $\gamma'$ tends to $p'_{\infty} = f(p_{\infty})$ as $t\to\infty$ and if $J'_{\alpha,\infty}= \lim_{t\to \infty}J'_{\alpha}(t)$ then $(J_{1,\infty},\ldots, J_{n,\infty})$ is a basis for $T_{p'_{\infty}}\partial X_2$ and is the image of $(J_{1,\infty},\ldots, J_{n,\infty})$ under the map $df_{p_{\infty}}$. Writing $g_1 = \rho_1^{-2}h_1$ and $g_{2}=\rho^{-2}_2h_2$, where the $h_i$ are smooth and nondegenerate up to the respective conformal infinities, and setting  $h_{\alpha\beta}= h_1(J_{\alpha},J_{\beta})$ and $h'_{\alpha\beta}= h_2(J'_{\alpha},J'_{\beta})$, since $F^*g_2=g_1$ we have that 
\begin{equation}\label{eq:conformality-on-gamma-perp}
    h'_{\al\be}(t) = \lambda(t)h_{\al\be}(t),
\end{equation}
for all $t\in (0,\infty)$. Finally, by continuity, $h_{\al\be}^{\infty}=\lim_{t\to\infty}h_{\al\be}(t) = h_1(J_{\al,\infty}, J_{\be,\infty})$ and ${h'}_{\al\be}^{\infty}=\lim_{t\to\infty}h'_{\al\be}(t) = h_2(J'_{\al,\infty}, J'_{\be,\infty})$, and since the matrices $(h_{\al\be}^{\infty})$ and $({h'}_{\al\be}^{\infty})$ are both nondegenerate, it follows from \cref{eq:conformality-on-gamma-perp} that 
\begin{equation}\label{eq:conf-at-bdry}
    {h'}_{\al\be}^{\infty} = \lambda_{\infty}h_{\al\be}^{\infty},
\end{equation}
for some $\lambda_{\infty} >0$. But, since $df_{p_{\infty}}$ takes the frame $(J_{\al,\infty})$ to the frame $(J'_{\al,\infty})$, equation \cref{eq:conf-at-bdry} is equivalent to
\begin{equation}
    f^*h_2 = \lambda_{\infty} h_1
\end{equation}
at $p_{\infty}$. Since the point $p_{\infty}\in \partial X_1$ was arbitrary, we conclude that $f$ is a conformal diffeomorphism. This proves \cref{cor:smooth-boundary-extension}.

\subsection{Regularity in the normal direction}\label{sec:normal-reg}

In this section we supplement \cref{cor:smooth-boundary-extension} with the following boundary regularity result.  

\begin{theorem}\label{thm:boundary-extension-nor-reg}
    Let $(X_1,g_1)$ and $(X_2, g_2)$ be conformally compact Riemannian manifolds. Let $F$ be an isometry from $(X_1, g_1)$ to $(X_2, g_2)$. Then $F$ extends to a $\cC^1$ diffeomorphism from $\oX_1$ to $\oX_2$. If $(X_1,g_1)$ is asymptotically hyperbolic, then the extension is $\cC^{\infty}$.
\end{theorem}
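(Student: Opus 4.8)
The plan is to exploit the $\cC^0$ coordinate system on $\oX_i$ built from the ``geodesic shooting'' map $\exp_{p,\infty}$ near a boundary point, but upgraded by using the geometrically correct radial variable $r = e^{-\kappa t}$ along each geodesic (with $\kappa$ evaluated at the geodesic's endpoint), as flagged in the Remark following the proof of \cref{cor:smooth-boundary-extension}. Concretely, I would fix $p_\infty \in \partial X_1$, pick a geodesic $\gamma$ tending to $p_\infty$ with $\gamma(0) = p \in U$ and $\dot x^0(0) > 0$, and on a suitable neighborhood $V_{p,p_\infty}\times (0,1)$ define the chart $(v,r)\mapsto \exp_p(-\tfrac{1}{\kappa(v)}\log r \cdot v)$, where $\kappa(v) = \kappa(\exp_{p,\infty}(v))$ is a smooth function of $v$ by \cref{thm:boundary-exponential}(ii). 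By \cref{lem:rho-t-asymp} this makes $r \sim \rho$, and I claim this chart extends to a $\cC^1$ (indeed better, in the AH case $\cC^\infty$) coordinate system on $\oX_i$ up to $r=0$. The key input here is the refined asymptotics of \cref{thm:asymptotic-behavior-of-geodesic}(ii)--(iii) and \cref{rem:x-alpha-dot-more-precise-asymp}: in Fermi coordinates the geodesics are $\cC^{1,\alpha}$ up to the boundary with $\dot x^\alpha/\dot x^0 = O(\rho\log\rho)$, and they hit $\partial X$ orthogonally, so the map $(v,r)\mapsto$ Fermi coordinates of the corresponding point is $\cC^1$ up to $r=0$, with the failure of $\cC^2$ controlled by the explicit $\rho^2\log\rho$ term whose coefficient $\kappa^\alpha/\kappa^2$ is determined by the endpoint alone; in the asymptotically hyperbolic case that term vanishes and one gets $\cC^\infty$ via \cref{thm:AH-characterization}(iii) together with smooth dependence on initial conditions from Appendix \ref{app:reg}.

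Next I would show that the isometry $F$, read in these two charts, is the map $(v,r)\mapsto (dF_p(v), r)$ up to terms that vanish to sufficiently high order. The crucial point is that $F$ sends a unit-speed geodesic $\gamma$ with endpoint $q$ to a unit-speed geodesic $F\circ\gamma$ with endpoint $f(q)$, and that $\kappa$ is a \emph{conformal} invariant of the boundary only up to the conformal factor of $f$: writing $g_i = \rho_i^{-2}h_i$ and using $f^*h_2 = \lambda_\infty h_1$ from the proof of \cref{cor:smooth-boundary-extension}, one has $\kappa_2(f(q)) = \lambda_\infty(q)^{-1/2}\kappa_1(q)\cdot(\text{ratio of defining functions})$, so the radial variables $r_1$ and $r_2$ agree up to a \emph{smooth, positive, nonvanishing} factor on $\partial X_1$ (and $F^*r_2 = c(v) r_1 + $ higher order in $r_1$, with $c$ smooth). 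Since the tangential part is $dF_p$ (which is smooth) and the radial parts are smoothly comparable with matching leading behavior $r_i \sim \rho_i$, the composite $\Phi_2^{-1}\circ F\circ \Phi_1$ extends $\cC^1$ across $r=0$; in the AH case, where both charts are $\cC^\infty$ and $F$ in the interior is a diffeomorphism, the extension is automatically $\cC^\infty$ because a diffeomorphism between manifolds-with-boundary that is smooth in the interior and $\cC^0$ up to the boundary, sending one smooth chart to another, is smooth up to the boundary (here one must verify the boundary Jacobian is nonsingular, which follows from \cref{eq:conf-at-bdry} and the fact that $\partial_r$ maps to $\partial_r$ up to a nonzero factor).

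I expect the main obstacle to be the careful bookkeeping of the radial variable: one must check that defining $r$ by $e^{-\kappa t}$ with $\kappa$ evaluated \emph{at the endpoint} — a quantity that depends on $v$ but is constant along each fixed geodesic — really does produce a chart that is $\cC^1$ up to $r=0$ and no worse, and that the comparison $F^*r_2/r_1$ extends continuously and positively to the boundary. This requires combining \cref{lem:rho-t-asymp} (which gives $\rho \sim e^{-\kappa t}$ but only up to bounded factors) with the sharper statement that $\rho \, e^{\kappa t}$ actually converges along each geodesic, which in turn needs the $\cC^1$ regularity of the solution of \cref{eqn:cogeo-tau-w} at $\tau = 0$ from \cref{lem:ext-to-tau-eq-0} to control the $O(\rho)$ error in $\tfrac{dt}{d\tau} = -\tfrac{1}{\kappa\tau} + O(\rho)$ finely enough; once $\rho\,e^{\kappa t}\to\ell(v) > 0$ with $\ell$ depending continuously (in fact smoothly, by Appendix \ref{app:reg}) on $v$, the rest is routine. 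The $\cC^1$-only conclusion in the general case is then exactly what the $\rho^2\log\rho$ singularity in \cref{rem:x-alpha-dot-more-precise-asymp} forces, and the sharpening to $\cC^\infty$ under asymptotic hyperbolicity is precisely where \cref{thm:AH-characterization} is invoked.
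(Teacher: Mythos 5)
Your chart construction is essentially the paper's own route: the proof of \cref{thm:boundary-extension-nor-reg} proceeds exactly by upgrading the $\cC^0$ charts of \cref{sec:proof-of-cor} to $(v,r)$ with $r=e^{-\kappa t}$, $\kappa$ evaluated at the endpoint, and showing (\cref{lem:local-def-fun}) that these are $\cC^1$ up to $r=0$ (smooth in the AH case), using the $\cC^1$ (resp.\ $\cC^{\infty}$) regularity of the extended flow \cref{eqn:cogeo-tau-w} and \cref{app:reg}. One caveat on your reduction: ``once $\rho\,e^{\kappa t}\to\ell(v)$ the rest is routine'' understates the work. Convergence of $\rho\,e^{\kappa t}$ along each geodesic, even with smooth dependence of $\ell$ on $v$, is a $\cC^0$-level statement about $r/\rho$; what is actually needed is that $r$, as a function of the $\cC^1$ coordinates $(v,\tau)$, has $\partial_\tau r$ and $\partial_v r$ extending continuously to $\tau=0$. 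The paper gets this by writing $r=-e^{\omega}\tau$ and showing $\partial_\tau\omega=\sigma\tau/(\rho w^0)$ is $\cC^1$ up to $\tau=0$; some such derivative-level argument is required, though the ingredients you cite (\cref{lem:rho-t-asymp}, the $\cC^1$ regularity at $\tau=0$) are the right ones.

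The genuine gap is in your comparison of $r_1$ and $r_2$ under $F$. You assert that $\kappa$ is only a conformal invariant up to the factor of $f$ and a ``ratio of defining functions,'' concluding $F^*r_2=c(v)\,r_1+o(r_1)$ with $c$ smooth. This is wrong: $\kappa$ is an invariant of the metric $g$ alone, since $-\kappa^2$ is the limiting sectional curvature of $g$ at the boundary point (equivalently, $|d\rho|_h$ on $\partial X$ is unchanged under $\rho\mapsto\Omega\rho$, $h\mapsto\Omega^2h$ because the extra term $\rho\,d\Omega$ vanishes on $\partial X$). Hence an isometry gives $\kappa_2\circ f=\kappa_1$ exactly, and since $F$ preserves arclength and carries the geodesics $\gamma_v$ used for the $X_1$ chart to those used for the $X_2$ chart based at $F(p)$, one has $F^*r_2=r_1$ exactly; in the two charts $F$ is literally $(v,r)\mapsto(dF_p v,r)$, and the theorem follows immediately from the chart regularity. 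As written, your weaker statement does not yield the conclusion: knowing $F^*r_2=c(v)r_1+o(r_1)$ gives no control on first derivatives of the error, so the $\cC^1$ extension of $\Phi_2^{-1}\circ F\circ\Phi_1$ does not follow; and your fallback in the AH case--that a map smooth in the interior, continuous up to the boundary, and carrying one smooth chart to another is automatically smooth up to the boundary--is false as a general principle (consider $x\mapsto\sqrt{x}$ on a half-line). Replacing the approximate comparison by the exact identity $F^*r_2=r_1$ repairs the argument and is precisely what the paper does.
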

\begin{remark}
(i) Note that, in contrast to our previous results, the result that the extension is $\cC^1$ in the general case may not be optimal. Since our proof makes use of coordinates constructed from the solutions of our extended flow \cref{eqn:cogeo-tau-w}, which are only $\cC^1$ up to the boundary in general, we cannot establish higher boundary regularity by this method. Nevertheless, it is useful to know that the extended map $F:\oX_1\to\oX_2$ is at least $\cC^1$ up to the boundary. (ii) We could have used this result to establish the conformality of $f$ in the previous section, but we wanted to emphasize that \cref{cor:smooth-boundary-extension} follows directly from \cref{thm:boundary-exponential} (and also to separate out the discussion of tangential and normal regularity at the conformal infinity).
\end{remark}

To prove \cref{thm:boundary-extension-nor-reg} we modify the canonical $\cC^0$ boundary coordinate charts constructed in \cref{sec:proof-of-cor} to improve the regularity. As before, given a point $p_{\infty}\in \partial X$ we fix a point $p\in U$ and a subset $V_{p,p_{\infty}}\subset V_p$ such that the map $V_{p,p_{\infty}}\times (0,\infty] \ni (v,t) \mapsto \exp_p(tv) \in \oX$ (extended continuously to $t=\infty$ in the obvious way) parametrizes a neighborhood $N$ of the point $p_{\infty}$ in $\oX$. Identifying $V_{p,p_{\infty}}$ with its image in $\partial X$ under $\exp_{p,\infty}$ we may think of $\kappa$ as a $\cC^{\infty}$ function of $v\in V_{p,p_{\infty}}$. With $\kappa$ interpreted this way we define the function $r$ on $N$ by $r=e^{-\kappa t}$ when $t\in [0,\infty)$ and $r=0$ when $t=\infty$. The new coordinates $(v,r)\in V_{p,p_{\infty}}\times [0,1)$ for $N$ are $\cC^{\infty}$ away from $\partial X$ ($r=0$). Up to the boundary we have:
\begin{lemma}\label{lem:local-def-fun}
The smooth coordinates $(v,r=e^{-\kappa t}) \in V_{p,p_{\infty}}\times (0,1)$ for $N\setminus\partial X$ extend to $\cC^1$ coordinates $(v,r) \in V_{p,p_{\infty}}\times [0,1)$ for $N$ with $r$ a local defining function for $\partial X$. If $(X,g)$ is asymptotically hyperbolic then the extension is $\cC^{\infty}$.
\end{lemma}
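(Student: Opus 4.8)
The plan is to study the coordinate map $\Phi\colon V_{p,p_{\infty}}\times[0,1)\to N$ given for $r>0$ by $\Phi(v,r)=\exp_p\!\big(-\tfrac{1}{\kappa(v)}(\log r)\,v\big)$ and by $\Phi(v,0)=\exp_{p,\infty}(v)$, i.e.\ the inverse of the chart $(v,r)$, and to show it is $\cC^1$ (resp.\ $\cC^{\infty}$ in the asymptotically hyperbolic case) up to $r=0$ with everywhere invertible differential. Since $(v,r)$ is already a $\cC^{\infty}$ chart on $N\setminus\partial X$, only the behaviour as $r\to 0$ requires attention. Shrinking $p$ towards $p_{\infty}$ and $V_{p,p_{\infty}}$ if necessary, I would arrange that every geodesic $t\mapsto\exp_p(tv)$, $v\in V_{p,p_{\infty}}$, remains in a fixed Fermi coordinate chart $(x^0,x^1,\dots,x^n)$ for $h$ near $p_{\infty}$ (these geodesics all have $\dot x^0>0$, being in $V_p$), and that $\exp_{p,\infty}$ is injective on $V_{p,p_{\infty}}$.

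Next, for each $v$ I would reparametrize $\gamma_v(t)=\exp_p(tv)$ by $\tau=x^0$; by \cref{lem:ext-to-tau-eq-0} this produces a solution $(x^{\alpha}_v,w^0_v,w^{\alpha}_v)$ of \cref{eqn:cogeo-tau-w} on $[\tau_0,0]$ (with $\tau_0=x^0(p)<0$ the same for all $v$) that is $\cC^1$ in $\tau$ up to $\tau=0$, with $x^{\alpha}_v(0)$ the Fermi coordinates of $\exp_{p,\infty}(v)$ and $w^0_v(0)=1$, and — invoking the regularity results of \cref{app:reg} — the solution together with its first-order $\tau$- and $v$-derivatives depends continuously on $(\tau,v)$ up to $\tau=0$. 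Setting $\hat\rho:=\rho/(-x^0)$, which by \cref{eq:rho-vs-x} is smooth up to $\partial X$ with $\hat\rho|_{\partial X}=\kappa$, and $Q_v(\tau):=\hat\rho(\tau,x^{\alpha}_v(\tau))\,w^0_v(\tau)$, one has $\rho w^0=-\tau\,Q_v(\tau)$ along $\gamma_v$, with $Q_v$ of class $\cC^1$ in $\tau$, jointly continuous in $(v,\tau)$, and $Q_v(0)=\kappa(x^{\alpha}_v(0))=\kappa(v)>0$. Then from $\tfrac{dt}{d\tau}=\tfrac{1}{\rho w^0}=-\tfrac{1}{\tau Q_v(\tau)}$ (cf.\ \cref{lem:rho-t-asymp}) and $\tfrac{1}{Q_v(\tau)}=\tfrac{1}{\kappa(v)}+\tau\,S_v(\tau)$, where $S_v(\tau)=\tau^{-1}\big(\tfrac{1}{Q_v(\tau)}-\tfrac{1}{\kappa(v)}\big)$ is bounded and jointly continuous up to $\tau=0$ (because $\tfrac{1}{Q_v}-\tfrac{1}{\kappa}$ is $\cC^1$ in $\tau$ and vanishes at $\tau=0$), integrating in $\tau$ and exponentiating yields the key identity
\[
r \;=\; e^{-\kappa(v)\,t}\;=\;\Psi_v(\tau),\qquad\text{where}\quad \Psi_v(\tau):=\frac{\tau}{\tau_0}\,e^{\kappa(v)\,T_v(\tau)},\quad T_v(\tau):=\int_{\tau_0}^{\tau}S_v(s)\,ds .
\]
Here $\Psi_v$ is $\cC^1$ in $\tau$, jointly $\cC^1$ in $(v,\tau)$ with the joint regularity furnished by \cref{app:reg}, vanishes at $\tau=0$, and $\Psi_v'(0)=\tfrac{1}{\tau_0}e^{\kappa(v)T_v(0)}\neq 0$. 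The factor $\kappa(v)$ in $r=e^{-\kappa(v)t}$ is exactly what cancels the logarithms and leaves $\Psi_v$ with a nonvanishing $\tau$-derivative at $\tau=0$ — this is the point of choosing $r=e^{-\kappa t}$.

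Then I would apply the $\cC^1$ implicit function theorem with parameters to $\Psi$: this gives $\tau=\Psi_v^{-1}(r)$ of class $\cC^1$ in $(v,r)$ up to $r=0$, with $\Psi_v^{-1}(0)=0$ for all $v$, so that in the Fermi chart
\[
\Phi(v,r)=\big(\Psi_v^{-1}(r),\,x^1_v(\Psi_v^{-1}(r)),\dots,x^n_v(\Psi_v^{-1}(r))\big)
\]
is $\cC^1$ up to $r=0$. Its differential at $r=0$ is block triangular: the $x^0$-component of $\partial_r\Phi|_{r=0}$ equals $1/\Psi_v'(0)\neq 0$, the $x^0$-component of each $\partial_{v^j}\Phi|_{r=0}$ vanishes, and the matrix $\big(\partial_{v^j}x^{\alpha}_v(0)\big)=\big(\partial_{v^j}(x^{\alpha}\circ\exp_{p,\infty})\big)$ is invertible since $\exp_{p,\infty}$ is a local diffeomorphism by \cref{thm:boundary-exponential}(ii); hence $d\Phi|_{r=0}$ is invertible. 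Combined with the known behaviour for $r>0$ and injectivity, $\Phi$ is a $\cC^1$ diffeomorphism onto $N$, so $(v,r)$ are $\cC^1$ coordinates; since $r>0$ on $N\setminus\partial X$, $r=0$ on $N\cap\partial X$, and $\partial_r\Phi$ is transverse to $\partial X$, $r$ is a local defining function. In the asymptotically hyperbolic case $\kappa$ is locally constant, so $A^{\alpha}\equiv 0$ and \cref{eqn:cogeo-tau-w} reduces to the regular system \cref{eqn:cogeo-tau}, whose solutions are jointly $\cC^{\infty}$ in $(\tau,v)$ up to $\tau=0$; then $Q_v$, $S_v$ (smooth, since $\tfrac{1}{Q_v}-\tfrac{1}{\kappa}$ is smooth and vanishes at $\tau=0$), $T_v$, $\Psi_v$, $\Psi_v^{-1}$, and $\Phi$ are all $\cC^{\infty}$, which gives the $\cC^{\infty}$ extension.

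I expect the main obstacle to be the bookkeeping in the middle step: extracting from \cref{eqn:cogeo-tau-w}, whose right-hand side is only conormal at $\tau=0$ of the form \cref{eqn:cogeo-tau-w-RHS-log-expansion}, the clean facts that $\rho w^0=-\tau\,Q_v(\tau)$ with $Q_v\in\cC^1$ down to $\tau=0$ and that $\Psi_v$ is jointly $\cC^1$ in $(v,\tau)$ with nonvanishing $\tau$-derivative, so that the parametric implicit function theorem can be applied and the composition $x^{\alpha}_v\circ\Psi_v^{-1}$ stays $\cC^1$; this is where the structural form of the right-hand side and the regularity results of \cref{app:reg} do the real work, while the differential computation at $r=0$ and the asymptotically hyperbolic case are then routine.
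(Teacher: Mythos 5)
Your proposal is correct and takes essentially the same route as the paper: after passing to the $(v,\tau)$ variables via the extended system \cref{eqn:cogeo-tau-w} and the regularity results of \cref{app:reg}, your identity $r=\Psi_v(\tau)=\tfrac{\tau}{\tau_0}e^{\kappa(v)T_v(\tau)}$ is exactly the integrated form of the paper's equation $\omega'=\varphi$ (where $r=-e^{\omega}\tau$; indeed $\kappa(v)S_v(\tau)$ coincides with the paper's $\varphi$), and the nonvanishing of $\Psi_v'(0)$ is the paper's $\partial_\tau r\neq 0$ at $\tau=0$. The only packaging difference is that you invert $\Psi_v$ and check invertibility of $d\Phi$ at $r=0$ explicitly using \cref{thm:boundary-exponential}(ii), whereas the paper shows directly that $r$ is a $\cC^1$ function of the $\cC^1$ coordinates $(v,\tau)$; both arguments rest on the same joint-regularity facts from the appendix.
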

\begin{proof}
We assume, without loss of generality, that $N$ is contained in a single Fermi coordinate chart $(x^0,x^1,\ldots, x^n)$.  
For each $v\in  V_{p,p_{\infty}}$, let $\gamma_v(t)=\exp(tv)$ be the corresponding geodesic (for $t\geq 0$) and let $\tilde{\gamma}(\tau)$ be its reparametrization in terms of minus the $h$-distance from the boundary ($\tau\in [-\delta_0,0)$). Writing the $(x^0,x^1,\ldots, x^n)$-coordinates of $\tilde{\gamma}_v(\tau)$ as $(x^0(v,\tau), \ldots, x^n(v,\tau))$ gives a $\cC^{\infty}$-diffeomorphism
\begin{equation}\label{map:v-tau-to-xi}
 V_{p,p_{\infty}}\times (-\delta_0,0)   \ni (v,\tau) \mapsto  (x^0(v,\tau), \ldots, x^n(v,\tau)) \in N\setminus \partial X.
\end{equation}
By our previous discussion of the system \cref{eqn:cogeo-tau-w} and the regularity results of \cref{app:reg}, the map \cref{map:v-tau-to-xi} extends to $\tau=0$ as a $\cC^1$-diffeomorphism $ V_{p,p_{\infty}}\times (-\delta_0,0]\to N$ (and in the asymptotically hyperbolic case this map is $\cC^{\infty}$). We may therefore view $(v,\tau)$ as a $\cC^1$ coordinate system on $N$. The result will follow (in the general case) if we can show that $r$ is a $\cC^1$ function of $(v,\tau)$ up to $\tau=0$. 

Viewing $v$ as a parameter, we write $\partial_{\tau} r = \partial_0 r$ as $r'$. Along each geodesic $\frac{dt}{d\tau} = \frac{1}{\rho w^0}$, and hence for $\tau<0$ we have
\begin{equation}
    r' = \frac{dr}{dt}\cdot \frac{dt}{d\tau} = \frac{-\kappa r}{\rho w^0},
\end{equation}
where $\kappa$ is a smooth function of $v$, and $\rho$ and $w^0$ are viewed as smooth functions of $(v,\tau)$ that are $\cC^1$ up to $\tau=0$. For $\tau<0$ we define the smooth function $\omega$ of $(v,\tau)$ by requiring $r=-e^{\omega}\tau$. Then $r'= -\omega'e^{\omega}\tau - e^{\omega}$, where $\omega' = \partial_\tau \omega$, and it follows easily that
\begin{equation}
    \omega' = \frac{1}{\tau}\left(\frac{-\kappa \tau}{\rho w^0} - 1 \right)=:\varphi,
\end{equation}
for $\tau<0$. Now, $\rho = -\kappa \tau + \beta \tau^2$, where $\beta$ is a $\cC^{\infty}$ function of the Fermi coordinates (up to $\partial X$) and hence is $\cC^1$ up to $\tau=0$ as a function of $(v,\tau)$. Similarly, $w^0 = 1+\tau \alpha$, where $\alpha$ is a $\cC^1$ function of $(v,\tau)$ up to $\tau=0$. It follows that 
\begin{equation}
    \rho w^0 = -\kappa \tau + \sigma \tau^2,
\end{equation}
where $\sigma = -\kappa \alpha + \beta + \alpha \beta\tau$. Hence 
\begin{equation}
    \varphi = \frac{1}{\tau}\left( \frac{\rho w^0 - \sigma \tau^2}{\rho w^0} - 1\right) = \frac{\sigma \tau}{\rho w^0}.
\end{equation}
Since $\sigma/w^0$ is $\cC^1$ up to $\tau=0$ and $\rho/\tau = -\kappa +\beta\tau$ is $\cC^1$ up to $\tau=0$ and nonvanishing, it follows that  $\varphi$ is $\cC^1$ up to $\tau=0$ in the variables $(v,\tau)$. In particular, $\varphi$ is continuous up to $\tau=0$. It follows easily that $\omega$ is continuous on $N$. Since $r(v,-\delta_0)= e^{-\kappa t}|_{t=0}=1$ for all $v$, we have $r(v,\tau) = 1+ \int_{-\delta_0}^{\tau}\left.(-\varphi e^{\omega}\varsigma - e^{\omega})\right|_{(v,\varsigma)}\,d\varsigma$ on $N$. Hence $\partial _{\tau} r$ exists on $N$ (not just $N\setminus \partial X$) and is continuous up to $\tau=0$. 

In the general setting, therefore, it remains only to consider the first order $v$-partial derivatives. For clarity, we identify $V_{p,p_{\infty}}$ with it's image in $\partial X$ under $\exp_{p,\infty}$, so that the cartesian coordinates $(x^1,\ldots,x^n)$ can be used for $v$ when defining partial derivatives. Now, the function $r$ is smooth for $\tau<0$ and the $v$-partial derivatives of $r$ are zero when $\tau=0$. Moreover, since the $v$-partial derivatives of $r$ (of any order) tend to zero as $\tau \to 0^-$, they are clearly continuous on $N$. Thus $r$ is a $\cC^1$ function of $(v,\tau)$ on $N$.

To see that $(v,r)$ extend smoothly to $\partial X$ in the asymptotically hyperbolic case, we note that in this case $(v,\tau)$ are $\cC^{\infty}$ coordinates on $N$ and the functions $\varphi$ and $\omega$ are $\cC^{\infty}$ up to $\partial X$ so that  $r(v,\tau) = 1+ \int_{-\delta_0}^{\tau}\left.(-\varphi e^{\omega}\varsigma - e^{\omega})\right|_{(v,\varsigma)}\,d\varsigma$ is  $\cC^{\infty}$ on $N$.
\end{proof}

\cref{thm:boundary-extension-nor-reg} then clearly follows from \cref{lem:local-def-fun} by the same argument that was used to establish the continuous extension to the boundary in the \cref{sec:proof-of-cor}. An isometry $F:X_1\to X_2$ between conformally compact manifolds will take a coordinate system of the form appearing in \cref{lem:local-def-fun} to another of the same kind, and when viewed in these coordinates becomes the trivial map  $(v,r)\mapsto (v,r)$, for $r>0$. Since the coordinates extend to be $\cC^1$ up to the boundary ($\cC^{\infty}$ in the asymptotically hyperbolic case), the map $F$ trivially extends with the same regularity.

\subsection{Proof of Theorem \ref{thm:shooting-geodesics-from-boundary}} Here we prove \cref{thm:shooting-geodesics-from-boundary}, which includes the asymptotic expansion \cref{eq:geodesic-asymp} for a geodesic when viewed as a function of $x=-x^0$ in a Fermi coordinate chart.

Let $q\in \partial X$ and let $(x, y^1 \dots, y^n)$ be a Fermi coordinate chart adapted to $\partial X$ in $\oX$ with respect to the metric $h$, with $x > 0$ in $X$ and centered at $q$. In the notation that we have been using more frequently, we have $(x^0,x^1,\ldots,x^n) = (-x, y^1 \dots, y^n)$. We wish to show that (as unparametrized curves) the geodesics approaching $q\in \partial X$ have asymptotic expansion \cref{eq:geodesic-asymp} and are parametrized by $u\in T_q\partial X$. Since we know that we can obtain the geodesics (near $\partial X$) by solving \cref{eqn:cogeo-tau-w} with for $\tau\leq 0$ near $0$ with $x^{\alpha}|_{\tau=0}=0$, $w^0|_{\tau=0}=1$ and $w^{\alpha}|_{\tau=0}$ prescribed freely (see \cref{sec:geod-from-infty}), the proof boils down to establishing the asymptotic expansion of the form \cref{eq:geodesic-asymp} and showing that the $u^{\alpha}$ appearing in that expansion are in one-to-one correspondence with the $w^{\alpha}|_{\tau=0}$. \cref{thm:shooting-geodesics-from-boundary} will therefore follow easily from the following lemma:
\begin{lemma}
If $(x^{\alpha},w^0,w^{\alpha})$ is a solution of \cref{eqn:cogeo-tau-w} for $\tau \in [-\delta_0,0]$ with $x^{\alpha}|_{\tau=0}=0$ and $w^0|_{\tau=0}=1$, then
\begin{equation}
x^{\alpha}(\tau) = -\frac{\kappa^{\al}}{2\kappa}\cdot \tau^2\log |\tau| - \left(\frac{\kappa}{2}  w^{\al}|_{\tau=0}-\frac{\kappa^{\al}}{4\kappa}\right)\cdot \tau^2+o(\tau^2),
\end{equation}
where $\kappa$ and $\kappa^{\al}$ are evaluated at the origin $x^{\al}=0$ in the coordinate system for $\partial X$.
\end{lemma}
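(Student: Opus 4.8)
The plan is to extract the expansion directly by substituting into the first equation of \cref{eqn:cogeo-tau-w},
\[
\frac{dx^{\alpha}}{d\tau} = V^{\alpha} = \rho\,\frac{L^{\alpha}_{\beta}(w^{\beta}+A^{\beta})}{w^0},
\]
expanding each factor on the right to sufficient order near $\tau=0$ and then integrating, using $x^{\alpha}|_{\tau=0}=0$. Write $\bar{w}^{\alpha}:=w^{\alpha}|_{\tau=0}$. By \cref{lem:ext-to-tau-eq-0} the solution $(x^{\alpha},w^0,w^{\alpha})$ is $\cC^1$ up to $\tau=0$, so $w^0(\tau)=1+O(\tau)$, $w^{\alpha}(\tau)=\bar{w}^{\alpha}+O(\tau)$ and $1/w^0=1+O(\tau)$. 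Since $A^{\beta}=O(\log|\tau|)$ by \cref{eq:A} and $\rho=O(|\tau|)$ by \cref{eq:rho-vs-x}, a first pass gives $V^{\alpha}=O(\tau\log|\tau|)$, and integrating yields the crude bound $x^{\alpha}(\tau)=O(\tau^{2}\log|\tau|)$ (this is also contained in \cref{rem:x-alpha-dot-more-precise-asymp}). This crude bound is exactly what is needed to freeze the coefficients $\kappa(x'(\tau))$ and $\kappa^{\alpha}(x'(\tau))/\kappa(x'(\tau))^{2}$ — which appear in $\rho$ and $A^{\beta}$ evaluated along the curve $x'(\tau)=(x^1(\tau),\ldots,x^n(\tau))$ — at their values $\kappa$, $\kappa^{\alpha}/\kappa^{2}$ at the origin $x'=0$, up to an error $O(\tau^{2}\log|\tau|)$.

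With this in hand I would refine each factor: $\rho(\tau)=-\kappa\tau+O(\tau^{2})$ from \cref{eq:rho-vs-x} and the crude bound; $L^{\alpha}_{\beta}(\tau)=\delta^{\alpha}_{\beta}+O(\tau)$ since $L$ is $\cC^{\infty}$ up to $\partial X$ and equals the identity there; and $A^{\beta}(\tau)=\frac{\kappa^{\beta}}{\kappa^{2}}\log|\tau|+O(\tau\log|\tau|)$ from \cref{eq:A} together with $1/w^0=1+O(\tau)$ and the crude bound. Multiplying these out — keeping careful track of the mixed $\tau^{k}(\log|\tau|)^{j}$ terms, each of which carries at least one power of $\tau$, the worst being $\tau^{2}(\log|\tau|)^{2}=O(\tau\log|\tau|)$ — gives
\[
\frac{L^{\alpha}_{\beta}(w^{\beta}+A^{\beta})}{w^0}=\frac{\kappa^{\alpha}}{\kappa^{2}}\log|\tau|+\bar{w}^{\alpha}+O(\tau\log|\tau|),
\]
and hence, multiplying by $\rho=-\kappa\tau+O(\tau^2)$,
\[
V^{\alpha}(\tau)=-\frac{\kappa^{\alpha}}{\kappa}\,\tau\log|\tau|-\kappa\,\bar{w}^{\alpha}\,\tau+O(\tau^{2}\log|\tau|).
\]

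Finally I would integrate from $0$ to $\tau$, using $\int_{0}^{\tau}s\log|s|\,ds=\frac{\tau^{2}}{2}\log|\tau|-\frac{\tau^{2}}{4}$, $\int_{0}^{\tau}s\,ds=\frac{\tau^{2}}{2}$ and $\int_{0}^{\tau}O(s^{2}\log|s|)\,ds=O(\tau^{3}\log|\tau|)=o(\tau^{2})$, together with $x^{\alpha}|_{\tau=0}=0$, to obtain
\[
x^{\alpha}(\tau)=-\frac{\kappa^{\alpha}}{\kappa}\!\left(\frac{\tau^{2}}{2}\log|\tau|-\frac{\tau^{2}}{4}\right)-\kappa\,\bar{w}^{\alpha}\,\frac{\tau^{2}}{2}+o(\tau^{2})=-\frac{\kappa^{\alpha}}{2\kappa}\,\tau^{2}\log|\tau|-\left(\frac{\kappa}{2}\bar{w}^{\alpha}-\frac{\kappa^{\alpha}}{4\kappa}\right)\tau^{2}+o(\tau^{2}),
\]
which is the asserted expansion (recall $\bar{w}^{\alpha}=w^{\alpha}|_{\tau=0}$). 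The only genuinely delicate point — the main obstacle — is the error bookkeeping: one must verify that the a priori estimate $x^{\alpha}=O(\tau^{2}\log|\tau|)$ is sharp enough that replacing $\kappa$ and $\kappa^{\alpha}$ by their boundary values, and likewise replacing $\rho$, $L$ and $w^0$ by their first-order expansions, produces only errors that are absorbed into $o(\tau^{2})$ after integration, and that no product of powers of $\tau$ and $\log|\tau|$ re-enters at order $\tau^{2}$ or worse. Granting this, the lemma reduces to the elementary integrals above.
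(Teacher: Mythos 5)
Your proposal is correct and follows essentially the same route as the paper's proof: expand each factor in $\frac{dx^{\alpha}}{d\tau}=\rho L^{\alpha}_{\beta}(w^{\beta}+A^{\beta})/w^0$ using the $\cC^1$ regularity up to $\tau=0$ (with $\rho=-\kappa\tau+O(\tau^2)$, $L=\mathrm{Id}+O(\tau)$, $w^0=1+O(\tau)$), arrive at the same intermediate expansion $\frac{dx^{\alpha}}{d\tau}=-\frac{\kappa^{\alpha}}{\kappa}\tau\log|\tau|-\kappa\,w^{\alpha}|_{\tau=0}\,\tau+O(\tau^{2}\log|\tau|)$, and integrate. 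Your extra step of freezing $\kappa,\kappa^{\alpha}$ at the origin via an a priori bound on $x^{\alpha}(\tau)$ is sound (indeed the $\cC^1$ bound $x^{\alpha}=O(\tau)$ already suffices) and is implicit in the paper's argument.
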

\begin{proof}
Recall that the solution is $\cC^1$ and that $\frac{dx^{\alpha}}{d\tau} = \rho L^{\alpha}_{\beta}(w^{\beta}+A^{\beta})/w^0$. Recall also that $L^{\alpha}_{\beta}$ is smooth and equal to $\delta^{\alpha}_{\beta}$ for $\tau=0$ ($x^0=0$) and that $\rho=-\kappa\tau +O(\tau^2)$. Since $w^0=1+O(\tau)$ and $w^{\alpha} = w^{\alpha}|_{\tau=0} + O(\tau)$ it follows that 
\begin{equation}
    \frac{dx^{\alpha}}{d\tau} = -\frac{\kappa^{\al}}{\kappa}\cdot\tau\log|\tau| - \kappa w^{\al}|_{\tau=0}\cdot \tau + O(\tau^2\log|\tau|). 
\end{equation}
Since $x^{\alpha}|_{\tau=0}=0$ it follows that
\begin{equation}
    x^{\alpha}(\tau) = -\frac{\kappa^{\al}}{2\kappa}\cdot \left(\tau^2\log|\tau|-\frac{1}{2}\tau^2\right)-\frac{\kappa}{2} w^{\al}|_{\tau=0}\cdot \tau^2 + O(\tau^3\log|\tau|). 
\end{equation}
The result follows.
\end{proof}

From the lemma we see that every geodesic $\gamma(t)$ in $(X,g)$ that tends to $q$ as $t\to\infty$ is given near $q$ by \cref{eq:geodesic-asymp}, with $\mathcal{O}^{\al}$ given by \cref{eq:obstuction}. Moreover, when the geodesic (as unparametrized curve) is viewed as a solution to \cref{eqn:cogeo-tau-w} with $x^{\alpha}|_{\tau=0}=0$, $w^0|_{\tau=0}=1$, the vector $u\in T_q\partial X$ appearing in \cref{eq:geodesic-asymp} is given by
\begin{equation}\label{eq:u-w-affine}
u^{\al} = -\frac{\kappa}{2}  w^{\al}|_{\tau=0}+\frac{\kappa^{\al}}{4\kappa}.
\end{equation}
Since $w^{\alpha}|_{\tau=0}$ may be prescribed freely when solving \cref{eqn:cogeo-tau-w} (near $\tau=0$) and this choice parametrizes the geodesics tending to $q$ (viewed as unparametrized curves), it follows from \cref{eq:u-w-affine} that one may equivalently parametrize the geodesics using the second order coefficient $u\in T_q\partial X$ in the asymptotic expansion \cref{eq:geodesic-asymp}. This proves \cref{thm:shooting-geodesics-from-boundary}.

\begin{remark}
    Although, as defined in \cref{eq:obstuction}, $\mathcal{O}$ depends on the choice of metric $h$ (such that $g=\rho^{-2}h$ with $\rho$ a defining function for $\partial X$), when interpreted as a vector field on $\partial X$ of conformal weight $-2$ it is independent of this choice.
\end{remark}

\section{A family of examples}\label{sec:examples}

Here we illustrate some of our main results in a simple setting by considering the family of metrics on the upper half plane $X=\{y>0\}$ in $\mathbb{R}^2$ defined by
\begin{equation}\label{eq:example-family}
g_{\epsilon} = \frac{dx^2+dy^2}{y^2e^{2\varepsilon x}}.
\end{equation}
Letting $h=dx^2+dy^2$ and $\rho= ye^{\varepsilon x}$ we have $g_{\epsilon}= \rho^{-2}h$ and so we are in the setting described above (of course, $\oX = \{y\geq 0\}$ is not compact, but this is of no consequence if we consider geodesics that start near $y=0$ with initial velocity directed downward and sufficiently close to the vertical). Note that $\kappa = e^{\varepsilon x}$, and we have the standard hyperbolic metric when $\varepsilon=0$.  

A straightforward calculation shows that the geodesic equations for the metric $g_{\epsilon}$ are given by
\begin{equation}\label{eq:example-geod-system}
    \begin{aligned}
        \ddot{x} - \frac{2}{y} \dot{x} \dot{y} - \varepsilon\! \left(\dot{x}^2 - \dot{y}^2\right) &= 0\\
        \ddot{y} + \frac{1}{y} \!\left(\dot{x}^2 - \dot{y}^2\right) - 2 \varepsilon \dot{x} \dot{y} &= 0.
    \end{aligned}
\end{equation}
When $\varepsilon=0$ these equations can be explicitly integrated and give the familiar circular (and straight) geodesics of the hyperbolic metric on the upper half plane. One way to see this is to define the normalized velocity variable $v_x=y^{-2}\dot{x}$ and observe that the geodesic equations imply that $\dot{v}_x=0$ so that $v_x$ must be constant. Thus $\dot{x}= cy^2$, where $c=v_x(0)$, and for a unit speed geodesic one finds that $\dot{y}^2 = y^2(1-cy^2)$, so that $\frac{dx}{dy}=\pm cy(1-cy^2)^{-1/2}$. The solutions of this last equation are circular arcs (of curvature $|c|$) when $c\neq 0$ and vertical line segments when $c=0$. 

When $\varepsilon\neq 0$, if we take $v_x=\rho^{-2}\dot{x}$ (as in \cref{sec:cogeod-flow-near-infty}) then the geodesic equations imply that $\dot{v}_x = -\varepsilon$ (cf.\ \cref{eqn:cogeo-vel-var}),  so that $v_x = c-\varepsilon t$ with $c=v_x(0)$.
Note that, while for this simple family of examples $v_x$ appears to be a ``good'' choice of velocity variable for the $x$-direction, from the discussion above we know that it is the linear behavior in $t$ of $v_x$ that ultimately leads to the (weak) logarithmic singularity of the geodesics at the conformal infinity (since, along a geodesic that tends to the boundary as $t\to\infty$, $y\sim e^{-\kappa t}$, where $\kappa$ is evaluated at the boundary endpoint of the geodesic). Moreover, while we can explicitly solve for $v_x$, when $\varepsilon \neq 0$ we can no longer straightforwardly integrate the full system \cref{eq:example-geod-system}. Using the (weakly) regular system \cref{eqn:cogeo-tau-w} obtained in \cref{sec:cogeod-flow-near-infty}, however, we can easily numerically integrate the geodesic equation up to $y=0$ (see Figure \ref{fig:varying-geods}). Of course, one can also fairly easily create such figures by numerically solving the geodesic equations \cref{eq:example-geod-system}, rewritten as a first order system, but this has the disadvantage of requiring the equation to be solved on an parameter time interval $[0,T]$ that is long enough for the geodesic to reach a Euclidean distance of, say, $10^{-4}$ from the boundary (and for $\varepsilon=1$ the minimum parameter time $T$ required increases rapidly as the initial velocity of the geodesic points more to the left).

\begin{figure}[h]
\vspace{0.5em}
\begin{minipage}{1\textwidth}
\centering
\includegraphics[height=3.8cm]{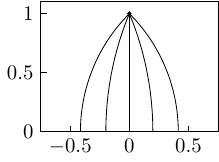} 
    \hspace{1pt}
\includegraphics[height=3.8cm]{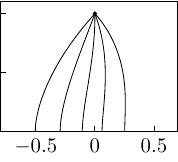}
    \hspace{1pt}
\includegraphics[height=3.8cm]{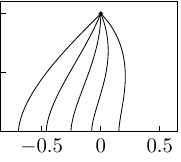}
\end{minipage}
\caption{\label{fig:varying-geods}
Five geodesics with initial position $(0,1)$ and initial velocities $(0,-1)$, $(\pm \sin(\pi/8),-\cos(\pi/8))$, and $(\pm \sin(\pi/4),-\cos(\pi/4))$ 
are plotted for the metric $g_{\varepsilon}$ for $\varepsilon=0$ (left), $\varepsilon =0.5$ (center) and $\varepsilon =1$ (right).}
\end{figure}

Of course, one cannot test the smooth dependence of the boundary endpoints on the initial direction by numerical computations such as those in Figure \ref{fig:varying-geods}. But the figure is at least suggestive. On the other hand, even with a modest level of computation, one can see the direction of the geodesics tending towards the normal direction to the boundary as $y\to 0^+$. In fact, when $\varepsilon \neq 0$ one can also see the leading order $y^2\log y$ behavior of the geodesics: Note that for our examples the obstruction \cref{eq:obstuction} to boundary regularity has only one component, $\mathcal{O}= -\frac{1}{2}\varepsilon$, and this is constant on $\partial X$. It follows that for a given $\varepsilon$, a geodesic reaching the boundary at a point $x_{\infty}$ is asymptotically of the form $x = x_{\infty} -\frac{1}{2}\varepsilon y^2 \log y + O(y^2)$. This explains why for a given $\varepsilon>0$ the geodesics in Figure \ref{fig:varying-geods} all bend slightly to the right at $y=0$ in approximately the same fashion (note that $-\frac{1}{2}\varepsilon y^2 \log y$ is positive near $y=0$). A further observation concerning Figure \ref{fig:varying-geods} is that increasing $\varepsilon$ causes the geodesics to ``drift to the left.'' This can also be seen without numerically plotting the geodesics: If one considers a segment of a hyperbolic geodesic and then increases $\varepsilon$ the geodesic segment between those two endpoints will move to the right due to the $e^{2\varepsilon x}$ in the denominator of \cref{eq:example-family} (for example, consider the segement of the second geodesic from the right with $\varepsilon =1$ in Figure \ref{fig:varying-geods} that begins and ends on the $y$-axis; it clearly lies to the right of the corresponding hyperbolic geodesic segment). Correspondingly, the geodesics with initial velocity pointing downward as in the figure will move to the left. With these observations one can produce a fairly accurate sketch of the geodesics shown in Figure \ref{fig:varying-geods} without computation.

A more important application of \cref{eqn:cogeo-tau-w} (from the point of view of constructing figures) is that it easily allows us to shoot geodesics in from the boundary, with prescribed quadratic term in the expansion \cref{eq:geodesic-asymp} of the geodesic. Noting the (invertible, affine) relationship \cref{eq:u-w-affine} between $u^{\al}$ and the initial condition at at $\partial X$ for the variable $w^{\al}$ used in \cref{eqn:cogeo-tau-w}, one can easily produce Figure \ref{fig:shooting-geods}.

\begin{figure}[h]
\vspace{0.5em}
\begin{minipage}{1\textwidth}
\centering
\includegraphics[height=3.8cm]{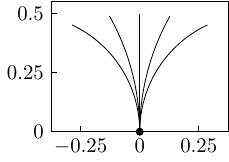} 
     \hspace{-5pt}
\includegraphics[height=3.8cm]{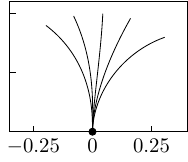}
    \hspace{-9pt}
\includegraphics[height=3.8cm]{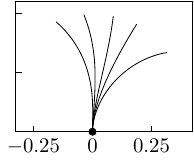}

\end{minipage}
\caption{\label{fig:shooting-geods}
The five geodesics of the metric $g_{\varepsilon}$ with prescribed asymptotic expansion $x = -\frac{1}{2}\varepsilon y^2 \log y + uy^2 + o(y^2)$, for $u=0,\pm 0.5,\pm 1$, are plotted near $(0,0)$  for $\varepsilon = 0$ (left),  $\varepsilon = 0.5$ (center) and  $\varepsilon = 1$ (right).}
\end{figure}

Note that although for $\varepsilon \neq 0$ the geodesics appear to be slightly tilted away from the vertical at the origin, this is not the case (as could be easily seen by looking at the geodesics on a smaller scale). For each $\varepsilon$ the geodesics have leading behavior $x=-\frac{1}{2}\varepsilon y^2 \log y + O(y^2)$, which ensures that they are indeed vertical at the origin. A comparison of the five geodesics with their leading asymptotics is given for $\varepsilon=1$ in the following figure.

\begin{figure}[h]
\vspace{0.5em}
\begin{minipage}{1\textwidth}
\centering
\includegraphics[height=5.7cm]{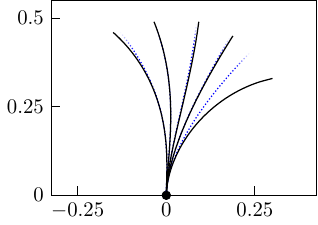} 
\end{minipage}
\caption{\label{fig:shooting-test}
The blue dotted curves are the asymptotic solutions $x = -\frac{1}{2}y^2 \log y + uy^2$ of the geodesic equation when $\varepsilon =1$, with $u=0,\pm 0.5,\pm 1$.
Superimposed in black are the five geodesics for $g_1$ from Figure \ref{fig:shooting-geods} that correspond to these five asymptotic curves. The second geodesic from the left can barely be distinguished from the asymptotic solution.}
\end{figure}

We conclude by noting that for the special family of metrics $g_{\varepsilon}$ the construction of the system \cref{eqn:cogeo-tau-w} simplifies. In particular, the change of variables going from the system \cref{eqn:cogeo-vel-var} to \cref{eqn:cogeo-tau-vhat} is trivial since for these examples we have $\partial_0 h^{\al \be}=0$ (equivalently, since the level sets of $y=-x^0$ form a geodesic foliation for the metric $h$). Hence in this case the variable $\hat{v}_x=\hat{v}^1$ is the same as $v_x = v^1$. It follows that if $w_y =-w^0$ and $w_x=w^1$, then 
\begin{equation}
    w_x =v_x-A,
\end{equation}
where $A(x,y,w_y)=A^1(x,y,w_y)$ is given by
\begin{equation}
    A(x,y,w_y) = -\frac{1}{w_y}\cdot \frac{\varepsilon}{e^{\varepsilon x}}\cdot \log y.
\end{equation}
The system \cref{eqn:cogeo-tau-w} then becomes 
\begin{equation}\label{eqn:cogeo-tau-w-example}
    \begin{aligned}
        \frac{dx}{dy}& = y e^{\varepsilon x}\frac{w_x+A}{w_y}\\
        \frac{dw_y}{dy}& = \varepsilon y e^{\varepsilon x}( w_x + A) - y e^{2\varepsilon x}\frac{(w_x+A)^2}{w_y}\\
        \frac{d w_x}{dy} &  = -   y e^{\varepsilon x}\frac{w_x+A}{w_y}\cdot \frac{\partial A}{\partial x} -   \left(\varepsilon y e^{\varepsilon x}( w_x + A) - y e^{2\varepsilon x}\frac{(w_x+A)^2}{w_y}\right)\cdot\frac{\partial A}{\partial w_y}.
    \end{aligned}
\end{equation}
Note that when $\varepsilon =0$,  $A=0$ and the above system simplifies to 
\begin{equation}
    \begin{aligned}
        \frac{dx}{dy}& = y \frac{w_x}{w_y}\\
        \frac{dw_y}{dy}& =  - y \frac{w_x^2}{w_y}\\
        \frac{d w_x}{dy} &  = 0,
    \end{aligned}
\end{equation}
which can easily be integrated. Although when $\varepsilon \neq 0$ the right hand side of \cref{eqn:cogeo-tau-w-example} is no longer smooth up to $y=0$, it is continuous up to $\partial X$ due to the factor of $y$ that appears in each term. The (weakly) regular system \cref{eqn:cogeo-tau-w-example} was used to construct Figures \ref{fig:varying-geods}--\ref{fig:shooting-test}  above.

\begin{appendix}

\section{Regularity results for ordinary differential equations}\label{app:reg}

In this section we prove some regularity results for a class of non-autonomous first order systems of ordinary differential equations (ODE) that arise naturally in the study of geodesics in conformally compactified and related geometries. The results will follow from a straightforward modification of the standard argument for the smoothness of the flow of a time-dependent vector field, but we include it anyway for completeness. The only real catch is that, while in the smooth case the smoothness of the flow of a time-dependent vector field $V_{\tau}$ on a manifold $M$ follows the smoothness of the flow of the vector field $\frac{\partial}{\partial \tau} + V_{\tau}$ on $\mathbb{R}\times M$ (with $\tau$ the coordinate for the $\mathbb{R}$-factor) and hence from a standard result on smooth dependence on initial conditions of solutions of autonomous first order ODE (see, e.g., \cite[Appendix D]{Lee-smooth-manifolds}), when $V_{\tau}$ is no longer smooth in $\tau$ we must work directly with the corresponding non-autonomous system.

We therefore let $U$ be a domain in $\mathbb{R}^n$ and $I$ an interval and consider a first order system of the form
\begin{equation}\label{IVP}
  \begin{aligned}
    y'(\tau) &= V (\tau, y(\tau)),\\
    y(\tau_0)&=x,
  \end{aligned}
\end{equation} 
where $(\tau_0,x)\in I\times U$ and $V:I\times U \to \mathbb{R}^n$ is a continuous function  satisfying some additional regularity hypotheses that will be specified below. Since our considerations are local it will be convenient to take $I=\mathbb{R}$ and $U=\mathbb{R}^n$. For simplicity we will also assume that $V$ is bounded. In our intended applications we will always be able to assume that $V$ has been extended to a bounded function on all of $\mathbb{R}\times \mathbb{R}^n$ in a way that respects the regularity hypotheses.

The first additional condition we will impose on $V:\mathbb{R}\times \mathbb{R}^n \to \mathbb{R}^n$ is the standard hypothesis that it be \textit{uniformly Lipschitz} in the second factor, with Lipschitz constant $C>0$. By this we mean that there is $C>0$ such that 
\begin{equation}
\| V(\tau, y_1) - V(\tau,y_2)\| \leq C \|y_1 - y_2\|
\end{equation}
for all $\tau\in \mathbb{R}$ and $y_1,y_2 \in \mathbb{R}^n$. It then follows from the classical Picard-Lindel\"of theorem that for any $(\tau_0,x)\in \mathbb{R}\times \mathbb{R}^n$ there is a unique solution $y:(\tau_0-\delta,\tau_0+\delta) \to \mathbb{R}^n$ of \cref{IVP} provided $\delta < \frac{1}{C}$. Since for completeness we would also like to consider the initial time $\tau_0$ as a variable, we note that this implies that if $I_0$ is an open interval of length less than $\frac{1}{C}$ then for any $\tau_0 \in I_0$ and $x\in \mathbb{R}^n$ there is a unique solution $y:I_0 \to \mathbb{R}^n$ of \cref{IVP}. Fixing such an interval $I_0$ it follows that there is a map $\theta : I_0 \times I_0 \times \mathbb{R}^n\to \mathbb{R}^n$ such that the solution $y:I_0 \to \mathbb{R}^n$ of \cref{IVP} with initial condition $(\tau_0,x)\in I_0\times \mathbb{R}^n$ is given by 
\begin{equation}
    y(\tau) = \theta(\tau, \tau_0,x).
\end{equation}
We will refer to the map $\theta : I_0 \times I_0 \times \mathbb{R}^n\to \mathbb{R}^n$ as the \textit{local flow map} of $V$. For fixed $(\tau,\tau_0)\in I_0\times I_0$ we also define the map $\mathrm{Fl}_{\tau,\tau_0} : \mathbb{R}^n\to \mathbb{R}^n$ by
\begin{equation}
    \mathrm{Fl}_{\tau,\tau_0}(x) = \theta(\tau, \tau_0,x).
\end{equation}
We are interested in the regularity of the maps $\theta$ and $\mathrm{Fl}_{\tau,\tau_0}$ under additional regularity hypotheses on $V$. 

\subsection{Basic properties of the local flow map}

Note that, under the present hypotheses, for fixed $(\tau_0,x)\in I_0\times \mathbb{R}^n$ the local flow map $\theta(\tau, \tau_0,x)$ is clearly $\cC^1$ in $\tau$. Importantly, the local flow map $\theta$ also inherits a uniform Lipschitz property (in the third argument) from the the uniform Lipschitz condition on $V$ (in the second argument). To see this, fix any $\tau_0 \in I_0$ and $x_1,x_2 \in \mathbb{R}^n$ and let $y_i(\tau) = \theta(\tau,\tau_0, x_i)$ for $i=1,2$. Then the Lipschitz property of $V$ implies that 
\begin{equation}
    \| y_1'(\tau) - y_2'(\tau)\| = \| V(\tau,y_1(\tau)) - V(\tau,y_2(\tau)) \| \leq C  \| y_1(\tau) - y_2(\tau)\|
\end{equation}
for all $\tau \in I_0$, and hence $\| y_1(\tau) - y_2(\tau)\|\leq e^{C|\tau-\tau_0|}\| x_1 - x_2\|$. In particular, since $|\tau - \tau_0|< \frac{1}{C}$ it follows that for any $\tau,\tau_0 \in I_0$ we have
\begin{equation}\label{ineq:partial-Lip-for-theta}
    \| \theta(\tau,\tau_0, x_1) - \theta(\tau,\tau_0, x_2)\| \leq e \| x_1 - x_2\|,
\end{equation}
for all $x_1,x_2 \in \mathbb{R}^n$.

Using \cref{ineq:partial-Lip-for-theta} one can then easily show that $\theta : I_0 \times I_0 \times \mathbb{R}^n\to \mathbb{R}^n$ is continuous. To see this, note that
\begin{equation}\label{eq:int-eq-for-theta}
    \theta(\tau,\tau_0,x) = x + \int_{\tau_0}^{\tau} V(s,\theta(s,\tau_0,x))\,ds
\end{equation}
so that 
\begin{equation}\label{eq:theta-diff}
\begin{aligned}
    \theta(\tilde{\tau},\tilde{\tau}_0,\tilde{x}) - \theta(\tau,\tau_0,x) &= (\tilde{x} - x) + \int_{\tau_0}^{\tau} \big( V(s,\theta(s,\tilde{\tau}_0,\tilde{x})) - V(s,\theta(s,\tau_0,x))\big) \,ds\\
    &\phantom{=} \qquad+ \int_{\tilde{\tau}_0}^{\tau_0} V(s,\theta(s,\tilde{\tau}_0,\tilde{x}))\,ds + \int_{\tau}^{\tilde{\tau}} V(s,\theta(s,\tilde{\tau}_0,\tilde{x})) \,ds. 
\end{aligned}
\end{equation}
Viewing $(\tau,\tau_0,x)$ as fixed, the only term on the right hand side of the above display that does not obviously go to zero as $(\tilde{\tau},\tilde{\tau}_0,\tilde{x})\to (\tau,\tau_0,x)$ is the first integral. To estimate this term we note that setting $y(s) = \theta(s,\tau_0,x)$ we have
\begin{equation}
    \theta(s,\tau_0,x) = \theta(s,\tilde{\tau}_0,y(\tilde{\tau}_0)).
\end{equation}
Using the Lipschitz property of $V$ in the second argument and \cref{ineq:partial-Lip-for-theta} we therefore have (assuming for simplicity that $\tau>\tau_0$)
\begin{equation}
\begin{aligned}
    &\left\|\int_{\tau_0}^{\tau}  \big( V(s,\theta(s,\tilde{\tau}_0,\tilde{x})) - V(s,\theta(s,\tau_0,x))\big) \,ds \right\| \\ & \qquad \qquad \qquad \leq C|\tau - \tau_0|  \sup_{s\in[\tau_0,\tau]} \|\theta(s,\tilde{\tau}_0,\tilde{x})-\theta(s,\tau_0,x)) \|\\
    & \qquad \qquad \qquad \leq \sup_{s\in[\tau_0,\tau]} \|\theta(s,\tilde{\tau}_0,\tilde{x})-\theta(s,\tau_0,x)) \|   \\
    & \qquad \qquad \qquad \qquad= \sup_{s\in[\tau_0,\tau]} \|\theta(s,\tilde{\tau}_0,\tilde{x})-\theta(s,\tilde{\tau}_0,y(\tilde{\tau}_0)) \| \\
    & \qquad \qquad \qquad \qquad\leq e\| \tilde{x} - y(\tilde{\tau}_0)\|\\
    & \qquad \qquad \qquad \qquad\leq e\| \tilde{x} - x\| + e\| y(\tilde{\tau}_0) - x\|.
\end{aligned}    
\end{equation}
Since $y(s) = \theta(s,\tau_0,x)$ is continuous, it follows easily that the right hand side of \cref{eq:theta-diff} goes to zero as  $(\tilde{\tau},\tilde{\tau}_0,\tilde{x})\to (\tau,\tau_0,x)$. Hence $\theta$ is continuous.

Note that, since  $\frac{\partial\theta}{\partial \tau}(\tau,\tau_0,x) = V(\tau,\theta(\tau,\tau_0,x))$, the continuity of $\theta$ implies that $\frac{\partial\theta}{\partial \tau}$ is continuous on $I_0\times I_0 \times\mathbb{R}^n$ (not merely on $I_0$ for each fixed $(\tau_0,x)\in I_0  \times\mathbb{R}^n$).

\subsection{A regularity theorem}

In our intended applications $V(\tau,y)$ will have a given regularity that is preserved under partial differentiation with respect to the $y$-variables, but not with respect to $\tau$. Taking our basic level of regularity in $(\tau,y)$ to be continuity, we therefore consider the the regularity of the maps $\theta$ and $\mathrm{Fl}_{\tau,\tau_0}$ under the additional hypothesis that the $y$-partial derivatives of $V(\tau,y)$ of order $k$ exist and are everywhere continuous in the variables $(\tau,y)$. 

\begin{theorem}\label{thm:appendix-ode-reg-Ck}
Let $V:\mathbb{R}\times \mathbb{R}^n \to \mathbb{R}^n$ be a continuous function that is uniformly Lipschitz in the second factor, with Lipschitz constant $C>0$. Suppose in addition that the $k^{th}$ order partial derivatives of $V$ with respect to the coordinates of the second factor exist and are continuous on $\mathbb{R}\times \mathbb{R}^n$ $(k\geq 1)$. Let $I_0$ be an open interval of length less than $\frac{1}{C}$. Then
\begin{itemize}
\item[(i)] the local flow map $\theta : I_0 \times I_0 \times \mathbb{R}^n\to \mathbb{R}^n$ is $\cC^1$;
\item[(ii)] the $k^{th}$ order partial derivatives of $\theta$  with respect to the coordinates of the third factor exist and are continuous on $I_0 \times I_0 \times \mathbb{R}^n$;
\item[(iii)] for fixed $(\tau,\tau_0)\in I_0\times I_0$ the map $\mathrm{Fl}_{\tau,\tau_0} : \mathbb{R}^n\to \mathbb{R}^n$ is $\cC^k$.
\end{itemize}
\end{theorem}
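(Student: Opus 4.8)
The plan is to adapt the classical proof of smooth dependence of ODE solutions on initial conditions, the one genuine subtlety being that $V$ is only continuous — not differentiable — in the time variable $\tau$, so the standard device of appending the initial time as a parameter (which would require differentiating $V$ in its first slot) is unavailable. The heart of the argument is to differentiate the local flow map $\theta$ in its third slot $x$ by means of the variational equation together with Gronwall's inequality; the dependence on $\tau_0$ is then recovered purely from the evolution property of $\theta$, and higher $x$-regularity by a bootstrap onto an augmented system. I expect the first step to be the main obstacle: pushing the difference-quotient-to-variational-equation argument through with $V$ merely continuous in $\tau$.

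\emph{Step 1: differentiability in $x$.} Since $k\ge 1$, the matrix $D_yV(\tau,y)$ exists and is continuous on $\mathbb{R}\times\mathbb{R}^n$. Fix $(\tau_0,x)$ and a coordinate direction $e_j$, and set $\delta_h(\tau)=\theta(\tau,\tau_0,x+he_j)-\theta(\tau,\tau_0,x)$. Using $V(\tau,y_2)-V(\tau,y_1)=\big(\int_0^1 D_yV(\tau,y_1+t(y_2-y_1))\,dt\big)(y_2-y_1)$, one finds that $\delta_h$ solves $\delta_h'=A_h(\tau)\delta_h$, $\delta_h(\tau_0)=he_j$, with $A_h(\tau)\to D_yV(\tau,\theta(\tau,\tau_0,x))$ uniformly on $I_0$ as $h\to 0$; this uses the Lipschitz bound $\|\delta_h(\tau)\|\le e\|h\|$ from \eqref{ineq:partial-Lip-for-theta} together with the uniform continuity of $D_yV$ on a compact set containing the relevant trajectories. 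Comparing $h^{-1}\delta_h$ with the solution $Z$ of the variational problem $Z'=D_yV(\tau,\theta(\tau,\tau_0,x))Z$, $Z(\tau_0)=I_n$, Gronwall's inequality gives $h^{-1}\delta_h(\tau)\to Z(\tau)e_j$; hence $D_x\theta$ exists and equals $Z$. That $Z$ — hence $D_x\theta$ — is continuous in $(\tau,\tau_0,x)$ follows by another Gronwall estimate, exactly as in the derivation of \eqref{ineq:partial-Lip-for-theta}, since $Z$ solves a linear system whose coefficient matrix $D_yV(\tau,\theta(\tau,\tau_0,x))$ is jointly continuous and whose initial condition and initial time depend continuously on the parameters.

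\emph{Step 2: dependence on $\tau_0$, and the $\cC^1$ conclusion (part (i)).} By uniqueness of solutions we have the evolution identity $\theta(\tau,\tau_1,x)=\theta(\tau,\tau_0,\theta(\tau_0,\tau_1,x))$. From the integral equation and the continuity of $V$ and $\theta$ one gets $\theta(\tau_0,\tau_1,x)=x-(\tau_1-\tau_0)V(\tau_0,x)+o(\tau_1-\tau_0)$ as $\tau_1\to\tau_0$. Differentiating the identity in $\tau_1$ at $\tau_1=\tau_0$ and invoking the $x$-differentiability established in Step 1 yields $\partial_{\tau_0}\theta(\tau,\tau_0,x)=-D_x\theta(\tau,\tau_0,x)\,V(\tau_0,x)$, which exists and is continuous — and this argument never differentiates $V$ in its time variable. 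Combined with $\partial_\tau\theta=V(\tau,\theta)$, which is continuous as already observed in the preliminary discussion, all first-order partials of $\theta$ exist and are continuous, so $\theta\in\cC^1$ on $I_0\times I_0\times\mathbb{R}^n$, proving (i).

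\emph{Step 3: higher $x$-regularity (parts (ii) and (iii)).} I would induct on $k$, the case $k=1$ being Steps 1--2. Assuming the theorem for $k-1$ and given $V$ with continuous $y$-partials up to order $k$, consider the augmented field $\tilde V(\tau,y,Z)=\big(V(\tau,y),\,D_yV(\tau,y)Z\big)$; it is continuous, and its partials in $(y,Z)$ up to order $k-1$ exist and are continuous (they involve $y$-partials of $V$ up to order $k$, with at most polynomial dependence on $Z$). Since the assertion is local, I would localize in $(\tau,\tau_0,x)$ and multiply $\tilde V$ by a cutoff supported in a large ball — which changes nothing along the solutions in play, since $V$ is bounded and $\|D_yV\|\le C$ — obtaining a bounded, uniformly Lipschitz field to which the inductive hypothesis applies. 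Its local flow map has components $\theta$ and $D_x\theta$ and, by the inductive hypothesis, enjoys continuous $(k-1)$-st order partial derivatives in its third (initial-value) variable; reading off the $D_x\theta$ component shows that $\theta$ has continuous $x$-partials up to order $k$, which is (ii). Part (iii) is then immediate: for fixed $(\tau,\tau_0)$, $\mathrm{Fl}_{\tau,\tau_0}=\theta(\tau,\tau_0,\cdot)$ has all $x$-partials of order $\le k$ continuous by (ii), hence is $\cC^k$. The $\tau_0$-dependence is the other place where the weakened hypothesis bites, which is why Step 2 routes it through the group property rather than the usual parameter trick; everything else is the standard bootstrap.
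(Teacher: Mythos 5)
Your proposal is correct and follows essentially the same architecture as the paper's proof: difference quotients controlled by the variational equation and a Gronwall-type estimate for the $x$-derivatives (the paper shows the quotients are uniformly Cauchy via the mean value theorem, while you compare directly with the solution $Z$ of the variational system — an equivalent device), the $\tau_0$-derivative recovered from the flow composition identity without ever differentiating $V$ in time (the paper differentiates $\theta(\tau,\tau_0,\theta(\tau_0,\tau,x))=x$, you expand in $\tau_1$; both yield $\partial_{\tau_0}\theta=-D_x\theta\cdot V(\tau_0,x)$), and the same cutoff-and-augment induction on $k$ using the system for $(y,D_x\theta)$. No gaps; the minor variations in execution do not change the substance of the argument.
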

\begin{remark} 
Note that, although \cref{thm:appendix-ode-reg-Ck} was stated for finite $k$, the case where $k=\infty$ clearly follows from the finite regularity case. 
\end{remark}

The key point here is that the regularity condition we imposed in the dependent variables holds uniformly in the independent variable, and this property is passed on to the local solution maps. 

We will prove \cref{thm:appendix-ode-reg-Ck} in stages in the subsections that follow. The proof is a straightforward modification of the standard argument for the case when $V$ is $\cC^k$ in both variables (see, e.g., \cite[Theorem D.5]{Lee-smooth-manifolds}). The catch, as we have mentioned, is that since the regularity in $\tau$ and $y$ is now different we cannot reduce to the time-independent (i.e.\ autonomous) case.

\subsection{Existence and continuity of first partials}

In this subsection we prove the theorem \cref{thm:appendix-ode-reg-Ck} for $k=1$.  Since (iii) clearly follows from (ii), it suffices to prove (i) and (ii). 

Suppose that $V$ satisfies the hypotheses of the \cref{thm:appendix-ode-reg-Ck} for $k=1$. That $\frac{\partial\theta}{\partial\tau}$ exists and is continuous on $I_0 \times I_0 \times \mathbb{R}^n$ has already been observed. We therefore consider the partial derivative of $\theta$ with respect to the $y$-coordinates (and afterward the $\tau_0$ coordinate). The key observation here is that when $\tau=\tau_0$, $\theta(\tau,\tau_0,x)=x$ so that $\frac{\partial \theta^i}{\partial x^j}(\tau,\tau_0,x) = \delta_j^i$. We use this fact together with the differential equation $\frac{\partial\theta}{\partial \tau}(\tau,\tau_0,x) = V(\tau,\theta(\tau,\tau_0,x))$ satisfied by $\theta$ to gain an understanding of $\frac{\partial \theta^i}{\partial x^j}$ for general $(\tau, \tau_0)$. 

Fix any closed interval $\oJ_0\subset I$ and any compact subset $\oU_0 \subset \mathbb{R}^n$ and consider the $n\times n$ matrix valued map $\Delta_h : \oJ_0\times \oJ_0\times \oU_0 \to \mathrm{M}_{n\times n}(\mathbb{R})$ defined by
\begin{equation}
    (\Delta_{h})^{i}_{j}(\tau,\tau_0,x) = \frac{\theta^i(\tau,\tau_0,x+he_j) - \theta^i(\tau,\tau_0,x)}{h},
\end{equation}
where $e_j$ is the $j^{th}$ standard basis vector in $\mathbb{R}^n$. Note that by \cref{ineq:partial-Lip-for-theta} we have 
\begin{equation}
    |(\Delta_{h})^{i}_{j}(\tau,\tau_0,x)| \leq e^{C|\tau-\tau_0|}< e
\end{equation} 
for each $i$ and $j$.
From the differential equation satisfied by $\theta$ it follows that 
\begin{equation}
    \frac{\partial}{\partial\tau}(\Delta_{h})^{i}_{j}(\tau,\tau_0,x) = \frac{1}{h}\left(V^i(\tau,\theta(\tau,\tau_0,x+he_j)) - V^i(\tau,\theta(\tau,\tau_0,x))  \right).
\end{equation}

For each  $(\tau,\tau_0,x)\in \oJ_0\times \oJ_0\times \oU_0$ (and each $i$ and $j$), consider the $\cC^1$ function
\begin{equation}
u(s) = V^i(\tau,(1-s)\theta(\tau,\tau_0,x)+s\theta(\tau,\tau_0,x+he_j)).
\end{equation}
By the mean value theorem there is a point $c\in (0,1)$ such that $u(1)-u(0) = u'(c)$. Hence, if $\vartheta = (1-c)\theta(\tau,\tau_0,x)+c\theta(\tau,\tau_0,x+he_j)$ then
\begin{equation}
    \frac{\partial}{\partial\tau}(\Delta_{h})^{i}_{j}(\tau,\tau_0,x) =  \sum_{k=1}^n \frac{\partial V^i}{\partial y^k} (\tau,\vartheta)\cdot (\Delta_{h})^{k}_{j}(\tau,\tau_0,x).
\end{equation}
Note that $\vartheta$ depends on $(\tau,\tau_0,x)\in \oJ_0\times \oJ_0\times \oU_0$ (and $i$ and $j$), but, since it lies on the line segment between $\theta(\tau,\tau_0,x)$ and $\theta(\tau,\tau_0,x+he_j)$ and $\theta$ is continuous, $\vartheta$ tends to $\theta(\tau,\tau_0,x)$ as $h\to 0$ (uniformly in $(\tau,\tau_0,x)$, since $ \oJ_0\times \oJ_0\times \oU_0$ is compact). If $\tilde{h}$ is another (small, nonzero) real number, we define $\tilde{\theta}$ similarly. Letting $Q_{h,\tilde{h}} = (\Delta_{h})(\tau,\tau_0,x) - (\Delta_{\tilde{h}})(\tau,\tau_0,x)$ it follows that
\begin{equation}\label{eq:del-t-Qhh}
\begin{aligned}
    \frac{\partial}{\partial \tau} (Q_{h,\tilde{h}})_j^i &= \frac{\partial}{\partial\tau} \left( (\Delta_{h})^{i}_{j}(\tau,\tau_0,x) - (\Delta_{\tilde{h}})^{i}_{j}(\tau,\tau_0,x) \right)\\
    &= \sum_{k=1}^n \frac{\partial V^i}{\partial y^k} (\tau,\vartheta)\cdot (\Delta_{h})^{k}_{j}(\tau,\tau_0,x) - \sum_{k=1}^n \frac{\partial V^i}{\partial y^k} (\tau,\tilde{\vartheta})\cdot  (\Delta_{\tilde{h}})^{k}_{j}(\tau,\tau_0,x) \\
    &= \sum_{k=1}^n \frac{\partial V^i}{\partial y^k} (\tau,\vartheta)\cdot \left((\Delta_{h})^{k}_{j}(\tau,\tau_0,x) - (\Delta_{\tilde{h}})^{k}_{j}(\tau,\tau_0,x)\right) \\
    &\phantom{=}\quad + \sum_{k=1}^n \left(\frac{\partial V^i}{\partial y^k} (\tau,\vartheta) - \frac{\partial V^i}{\partial y^k} (\tau,\tilde{\vartheta}) \right)\cdot  (\Delta_{\tilde{h}})^{k}_{j}(\tau,\tau_0,x).
    \end{aligned}
\end{equation}

Now, since $\theta$ is continuous, the image of $\oJ_0\times \oJ_0\times \oU_0$ is a compact set. Moreover, requiring that $|h|<1$, $x+he_j$  will always lie in the compact set $\oU_1 = \oU_0 + \overline{\mathbb{B}^n}$ and hence $\vartheta$ will always lie in the convex hull $\oW_{\!1}$ of the image of $\oJ_0\times \oJ_0\times \oU_1$ under $\theta$, which is also compact. Since the (first order) $y$-partial derivatives of $V$ are continuous in $(t,y)$, they are bounded on $\oJ_0\times \oW_{\!1}$. 
It therefore follows from \cref{eq:del-t-Qhh} that there is a constant $E>0$ such that 
\begin{equation}\label{eq:del-t-Qhh-est}
    \left|  \frac{\partial}{\partial \tau} (Q_{h,\tilde{h}})_j^i \right| \leq E  \left\| Q_{h,\tilde{h}}\right\| + e\left\| \sum_{k=1}^n \left(\frac{\partial V^i}{\partial y^k} (\tau,\vartheta) - \frac{\partial V^i}{\partial y^k} (\tau,\tilde{\vartheta}) \right) \right\|,
\end{equation}
provided $|h|<1$ and $|\tilde{h}|<1$. Since the $y$-partial derivatives of $V$ are also uniformly continuous on $\oJ_0\times \oW_{\!1}$, for any $\epsilon >0$ there is a $\delta > 0$ such that 
\begin{equation} \label{ineq:unif-cty-del-y-V}
\left| \frac{\partial V^i}{\partial y^k} (\tau,\vartheta) - \frac{\partial V^i}{\partial y^k} (\tau,\tilde{\vartheta}) \right| < \epsilon, \quad \text{for all }i \text{ and } k, 
\end{equation}
provided $|\tilde{\vartheta}-\vartheta |<\delta$. Now, for each $i$ and $j$, the $\vartheta$ and $\tilde{\vartheta}$ in \cref{eq:del-t-Qhh} and \cref{eq:del-t-Qhh-est} can be viewed as functions of $(\tau,\tau_0,x)$ that converge uniformly to $\theta(\tau,\tau_0,x)$ on $\oJ_0\times \oJ_0\times \oU_0$ as $h\to 0$ and $\tilde{h}\to 0$, respectively. Hence, for any $\epsilon >0$, by taking $|h|$ and $|\tilde{h}|$ sufficiently small we may ensure that \cref{ineq:unif-cty-del-y-V} holds (for all relevant indices) and hence that 
\begin{equation}\label{eq:del-t-Qhh-est-eps}
    \left\|  \frac{\partial }{\partial \tau} Q_{h,\tilde{h}}\right\| \leq nE  \left\| Q_{h,\tilde{h}}\right\| +  \epsilon n^2e.
\end{equation}

Note that when $\tau=\tau_0$ we have $Q_{h,\tilde{h}} = (\Delta_{h})(\tau_0,\tau_0,x) - (\Delta_{\tilde{h}})(\tau_0,\tau_0,x) = x-x=0$. For general $(\tau,\tau_0)$ it therefore follows by an ODE comparison argument that for any $\epsilon >0$ one has
\begin{equation}
    \|Q_{h,\tilde{h}}\| = \|(\Delta_{h})(\tau,\tau_0,x) - (\Delta_{\tilde{h}})(\tau,\tau_0,x)\| \leq \frac{\epsilon n e}{E}\left(e^{nE|\tau-\tau_0|}-1 \right) \leq \frac{\epsilon n e}{E}\left(e^{nE/C}-1 \right),
\end{equation}
provided $h$ and $\tilde{h}$ are sufficiently close to $0$. It follows that for any sequence of nonzero numbers $h_k$ that tends to zero, the corresponding sequence of matrix-valued functions $\Delta_{h_k}$ is uniformly Cauchy and hence uniformly convergent. Since the limit is clearly independent of the choice of $h_k$ it follows that $\lim_{h\to 0} \Delta_h$ exists (and is continuous). In particular, $\frac{\partial \theta^i}{\partial x^j}$ exists and is continuous (on  $\oJ_0\times \oJ_0\times \oU_0$) for each $i$ and $j$. Since compact sets of the form $\oJ_0\times \oJ_0\times \oU_0$ exhaust $I_0\times I_0\times \mathbb{R}^n$, the first order $x$-partial derivatives of $\theta$ exist and are continuous on $I_0\times I_0\times \mathbb{R}^n$.

It remains to show that the same holds for the partial derivative of $\theta$ with respect to $\tau_0$ (i.e. the second factor). To see this, we note that $\mathrm{Fl}_{\tau,\tau_0}\circ \mathrm{Fl}_{\tau_0,\tau} : \mathbb{R}^n \to \mathbb{R}^n$ is the identity map. Thus $\mathrm{Fl}_{\tau,\tau_0} = \mathrm{Fl}_{\tau_0,\tau}^{-1}$, and hence the existence and continuity of the $\tau_0$-partial derivative of $\theta(\tau,\tau_0,x) = \mathrm{Fl}_{\tau_0,\tau}^{-1} (x)$ follows from the existence and continuity of the partial derivatives of $\theta$ in the other two arguments (and the inverse function theorem). Moreover, differentiating the expression $\theta(\tau, \tau_0, \theta(\tau_0,\tau,x)) =x$  with respect to $\tau_0$ (and setting set $x_0=\theta(\tau_0,\tau,x)$, so that $x= \theta(\tau,\tau_0,x_0)$) we find that
\begin{equation}
    \frac{\partial \theta^i}{\partial \tau_0} (\tau,\tau_0,x_0) = -\sum_{k=1}^n \frac{\partial \theta^i}{\partial x^k} (\tau,\tau_0,x_0)\cdot \frac{\partial \theta^k}{\partial \tau} (\tau_0,\tau,\theta(\tau,\tau_0,x_0)).
\end{equation}

This proves \cref{thm:appendix-ode-reg-Ck} in the case where $k=1$.

\subsection{Higher regularity}

It remains to prove \cref{thm:appendix-ode-reg-Ck}(ii) for $k>1$ (part (iii) then follows). We do this by induction on $k$. Let $k\geq 1$ and suppose that \cref{thm:appendix-ode-reg-Ck}(ii) is known to hold for this $k$. Suppose that $V$ satisfies the hypotheses of \cref{thm:appendix-ode-reg-Ck} with $k$ replaced by $k+1$.

By the previous subsection, the local flow map $\theta : I_0\times I_0 \times \mathbb{R}^n\to \mathbb{R}^n$ of $V$ is $\cC^1$. Moreover, from \cref{eq:int-eq-for-theta} we have
\begin{equation}
    \frac{\partial \theta^i}{\partial x^j}(\tau,\tau_0,x) = \delta^i_j + \sum_{k=1}^n \int_{\tau_0}^{\tau} \frac{\partial V^i}{\partial y^k}(s,\theta(s,\tau_0,x))\cdot \frac{\partial \theta^k}{\partial x^j}(s,\tau_0,x)\,ds.
\end{equation}
It follows that for each $i$ and $j$ the $\tau$-partial derivative of the function $\frac{\partial \theta^i}{\partial x^j}$ exists (and is continuous in $(\tau,\tau_0,x)$) and satisfies
\begin{equation}
   \frac{\partial}{\partial \tau}\frac{\partial \theta^i}{\partial x^j}(\tau,\tau_0,x) = \sum_{k=1}^n  \frac{\partial V^i}{\partial y^k}(\tau,\theta(\tau,\tau_0,x))\cdot \frac{\partial \theta^k}{\partial x^j}(\tau,\tau_0,x).
\end{equation}
We therefore augment the independent variable $y=y(\tau)$ in our initial value problem \cref{IVP} with an $n\times n$ matrix valued function of $Y=Y(\tau)$ and consider the following following initial value problem for $(y,Y)$: 
\begin{equation}\label{eq:augmented-IVP}
\begin{aligned}
     y'(\tau) &= V(\tau,y(\tau)) \qquad \qquad &y(\tau_0)&=y_0 \\
     Y'(\tau) &= D_yV(\tau,y(\tau))\cdot   Y(\tau) \qquad& Y(\tau_0) &= Y_0.
\end{aligned}
\end{equation}
After fixing a compact subset $\oU_0 \subset \mathbb{R}^n$ and redefining $V$ by extending it from $\oI_0\times \oU_0$, if necessary, the augmented system \cref{eq:augmented-IVP} satisfies the hypotheses of \cref{thm:appendix-ode-reg-Ck} for our given $k$ (for a possibly larger Lipschitz constant $\hat{C}$). It follows that for any sufficiently small open subinterval $I_1$ of $I_0$ there is a well-defined local flow
\begin{equation}
    (\Theta,\Psi): I_1\times I_1\times \mathbb{R}^n \to \mathbb{R}^n\times \mathrm{M}_{n\times n}(\mathbb{R})
\end{equation}
for the system \cref{eq:augmented-IVP}, and that $\Theta$ and $\Psi$ have $k^{th}$ order partial derivatives with respect to the third factor. Since $y(\tau) = \theta(\tau,\tau_0,x)$ and $Y(\tau)= D_x \theta(\tau,\tau_0,x)$ solve this system with initial conditions $y_0=x$ and $Y_0 = \mathrm{Id}$, we have
\begin{equation}
    \theta(\tau,\tau_0,x) = \Theta(\tau,\tau_0,(x,\mathrm{Id})) \quad \text{ and }\quad D_x \theta(\tau,\tau_0,x) = \Psi(\tau,\tau_0,(x,\mathrm{Id}))
\end{equation}
when  $(\tau,\tau_0,x)\in I_1\times I_1\times \oU_0$. It follows that the first order $x$-partial derivatives $\frac{\partial \theta^i}{\partial x^j}$ have $k^{th}$ order $x$-partial derivatives that are continuous on $I_1\times I_1\times \oU_0$. Since $I_0\times I_0\times \mathbb{R}^n$ can be exhausted by sets of the form $I_1\times I_1\times \oU_0$ it follows that this holds on all of $I_0\times I_0\times \mathbb{R}^n$. Thus $\theta$ has $x$-partial derivatives of order $k+1$ that are continuous on $I_0\times I_0\times \mathbb{R}^n$. This proves the inductive step.

It follows by induction that \cref{thm:appendix-ode-reg-Ck}(ii) holds for all $k\geq 1$. This concludes the proof of \cref{thm:appendix-ode-reg-Ck}.

\end{appendix}

\bibliographystyle{plain}
\newcommand{\noopsort}[1]{}

\end{document}